\theoremstyle{plain}
\newtheorem{theorem}{Theorem}
\newtheorem{lemma}[theorem]{Lemma}
\newtheorem{proposition}[theorem]{Proposition}
\newtheorem{corollary}[theorem]{Corollary}
\theoremstyle{definition}
\theoremstyle{remark}
\newtheorem{remark}{Remark}
\newcommand{\prob}[1]{\mathbb{P}\left(#1\right)}
\newcommand{\eqd}{\stackrel{(d)}{=}}
\newcommand{\ind}[1]{{\bf 1}_{#1}}
\newcommand{\di}{\mathrm{d}}
\renewcommand{\tilde}[1]{\widetilde{#1}}
\newcommand{\NN}{\mathbb{N}}
\newcommand{\RR}{\mathbb{R}}
\newcommand{\para}{\mathfrak{p}}
\newcommand{\RN}{Radon--Nikodym}
\newcommand{\card}[1]{\text{card}{\left( #1 \right)}}
\newcommand{\Bullet}{bullet }
\newcommand{\BULLET}{Bullet }
\newcommand{\Loop}{loop  }
\newcommand{\red}[1]{{\color{red} #1}}
\newcommand{\blue}[1]{{\color{blue} #1}}
\newcommand{\grey}[1]{{\color{lightgray} #1}}
\let\originalleft\left
\let\originalright\right
\renewcommand{\left}{\mathopen{}\mathclose\bgroup\originalleft}
\renewcommand{\right}{\aftergroup\egroup\originalright}
\newcommand{\rb}{-1mm}
\newcommand{\VE}{\raisebox{\rb}{\includegraphics{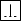}}} 
\newcommand{\VS}{\raisebox{\rb}{\includegraphics{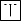}}} 
\newcommand{\VB}{\raisebox{\rb}{\includegraphics{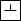}}} 
\newcommand{\VA}{\raisebox{\rb}{\includegraphics{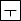}}} 
\newcommand{\VT}{\raisebox{\rb}{\includegraphics{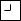}}} 
\newcommand{\HE}{\raisebox{\rb}{\includegraphics{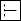}}} 
\newcommand{\HS}{\raisebox{\rb}{\includegraphics{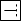}}} 
\newcommand{\HB}{\raisebox{\rb}{\includegraphics{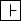}}} 
\newcommand{\HA}{\raisebox{\rb}{\includegraphics{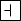}}} 
\newcommand{\HT}{\raisebox{\rb}{\includegraphics{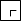}}} 
\newcommand{\OB}{\raisebox{\rb}{\includegraphics{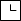}}} 
\newcommand{\OA}{\raisebox{\rb}{\includegraphics{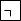}}} 
\newcommand{\CC}{\raisebox{\rb}{\includegraphics{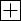}}} 
\newcommand{\rbt}{-0.7mm}
\newcommand{\sct}{0.7}
\newcommand{\tVE}{\raisebox{\rbt}{\includegraphics[scale=\sct]{Icones/VE.pdf}}} 
\newcommand{\tVS}{\raisebox{\rbt}{\includegraphics[scale=\sct]{Icones/VS.pdf}}} 
\newcommand{\tVB}{\raisebox{\rbt}{\includegraphics[scale=\sct]{Icones/VB.pdf}}} 
\newcommand{\tVA}{\raisebox{\rbt}{\includegraphics[scale=\sct]{Icones/VA.pdf}}} 
\newcommand{\tVT}{\raisebox{\rbt}{\includegraphics[scale=\sct]{Icones/VT.pdf}}} 
\newcommand{\tHE}{\raisebox{\rbt}{\includegraphics[scale=\sct]{Icones/HE.pdf}}} 
\newcommand{\tHS}{\raisebox{\rbt}{\includegraphics[scale=\sct]{Icones/HS.pdf}}} 
\newcommand{\tHB}{\raisebox{\rbt}{\includegraphics[scale=\sct]{Icones/HB.pdf}}} 
\newcommand{\tHA}{\raisebox{\rbt}{\includegraphics[scale=\sct]{Icones/HA.pdf}}} 
\newcommand{\tHT}{\raisebox{\rbt}{\includegraphics[scale=\sct]{Icones/HT.pdf}}} 
\newcommand{\tOB}{\raisebox{\rbt}{\includegraphics[scale=\sct]{Icones/OB.pdf}}} 
\newcommand{\tOA}{\raisebox{\rbt}{\includegraphics[scale=\sct]{Icones/OA.pdf}}} 
\newcommand{\tCC}{\raisebox{\rbt}{\includegraphics[scale=\sct]{Icones/CC.pdf}}} 
\newcommand{\TABM}{\text{CBMC}}
\newcommand{\PPP}{\text{PPP}}
\newcommand{\sk}{\stackrel{(\text{sk})}{\sim}}
\newcommand{\set}{\mathcal{S}}
\title{Quasi-reversible \Bullet models:\\colliding \Bullet model with creations and\\ a new(?)\ \Loop model}
\author{Jérôme Casse\footnote{jerome.casse.math@gmail.com}\\Université Paris-Saclay, CNRS\\Laboratoire de mathématiques d’Orsay\\91405 Orsay, France}
\date{\today}
\begin{document}

\maketitle

\begin{abstract}
  We consider a large class of \Bullet models that contains, in particular, the colliding \Bullet model with creations and a new \Loop model. For this large class of \Bullet models, we give sufficient conditions on their parameter to be $\text{rot}(\pi)$-quasi-reversible and to be $\text{rot}(\pi/2)$-quasi-reversible. Moreover, those conditions assure them that one of their stationary measures is described by a Poisson point process. These results, applied to the colliding \Bullet model with creations, are the first steps to study its non-empty stationary measure, and, applied to the \Loop model, prove its invariance according to all the symmetries of the square.
\end{abstract}

\section{Introduction}  \label{sec:intro}
A notorious and difficult bullets problem, and many of its variants, have been studied in the last decade. Send consecutively an infinite number of bullets. If all the bullets have the same speed, they never collide, and there is no problem; but what happens if the speeds of bullets are randomly distributed? This problem is known under at least two names: colliding bullets problem and total annihilation ballistic problem.\par
It has been studied in different contexts and for different distributions of speeds in many recent papers. Let us mention the works when the speeds are uniformly distributed on an interval $[a,b]$~\cite{BM20}, when the speeds are distributed on a set of size two~\cite{BL21} or three~\cite{HST21}, and in many other contexts~\cite{ABL20,ADJLPR24}. Interested readers on this subject should also refer to references therein.\par
\medskip
In~\cite{BCGKP15}, a model in an equilibrium context has been studied, called Colliding \BULLET Model with Creations (CBMC) here. In the model, there are two types of bullets, those of speed $-1$ and those of speed $1$. To obtain an equilibrium model, they add the following rule to create bullets: bullets with speed $-1$ (resp.\ $1$) create bullets of speed $1$ (resp.\ $-1$) at rate $1$. On Figure~\ref{fig:CBMC}, there is a realisation of the dynamic. For this model, they show that it has only two stationary measures invariant by translation: the empty one and another one with an infinite number of bullets. Nevertheless, this second one is hard to describe and does not have a simple form.\par
In addition, in the same work, they also study \Bullet models such that, when two bullets collide, they are not always both annihilated, but:
\begin{enumerate}
\item with probability $p_1$, the bullet of speed $1$ survives and the one of speed $-1$ disappears;
\item with probability $p_{-1}$, the bullet of speed $-1$ survives and the one of speed $1$ disappears;
\item with probability $p_{0}$, both bullets disappear;
\item with probability $1-p_1-p_{-1}-p_{0}$, they both survive and continue their trajectories.
\end{enumerate}
In some particular cases, e.g.\ when $p_1 = p_{-1} = 1/2$, they show that one of the stationary measures is two independent Poisson Point Processes with both intensities $1$: one for the distribution of bullets with speed~$-1$ and one for the distribution of bullets with speed~$1$.\par

\begin{figure}
  \begin{center}
    \includegraphics{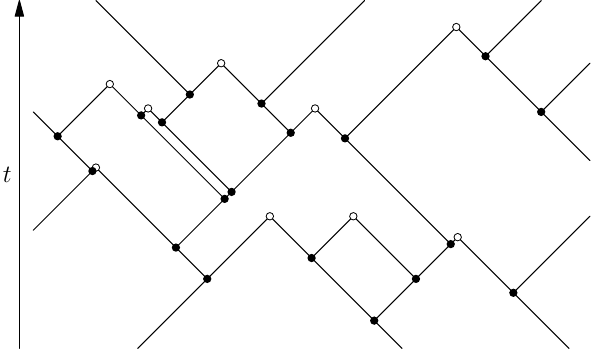}
  \end{center}
  \caption{A realisation of the colliding \Bullet model with creations. Black dots correspond to creations and white dots to annihilations.}%
  \label{fig:CBMC}
\end{figure}

A crucial motivation for the redaction of the present article is the description of the non-empty stationary measure of the CBMC. In particular, see Proposition~\ref{prop:TABM}, we give a simple description of the non-empty stationary measure of the CBMC. Moreover, see Proposition~\ref{prop:CBMCqr}, we prove that its space-time diagram has the same distribution as the space-time diagram, rotated by an angle $\pi/2$, of another \Bullet model whose stationary measure is two independent Poisson Point Process with both intensities $1$.\par
\medskip
In addition, even in older literature, some important very well-known models can be seen as \Bullet models. For example, the Hammersley's lines process, introduced to study the longest sub-increasing sequence of a permutation, could be seen as a colliding \Bullet model with random sources uniformly distributed on the plane~\cite{AD95,CG05}, see Figure~\ref{fig:HL}. Let us also mention the variant of Hammersley's lines process studied in~\cite{BGGS18} that can be seen, in the geometric case, as a \Bullet model.\par
\medskip
Moreover, if, in a Hammersley's lines process, the bullets are allowed to turn (their speed goes from~$-1$ to $1$, or the reverse) at rate $1$, a model with avoiding and self-avoiding loops emerges. Under (one of) its stationary measure, this model is invariant according to the eight symmetries of the square, see Proposition~\ref{prop:loop}. To the best knowledge of the author, this \Loop model seems to be new and not directly related to the most known \Loop models (Ising model, CLE : conformal loop ensemble, $O(n)$-loop model on random planar maps, etc.). For now, the only way to define it is only threw a dynamical system on $\RR$ and not directly on the space $\RR^2$. A realisation of this model is represented on Figure~\ref{fig:LM}. 

\begin{figure}
  \begin{center}
    \includegraphics{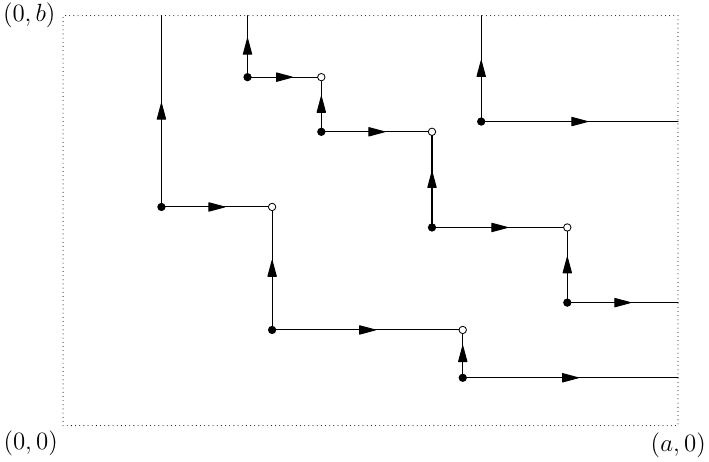}
  \end{center}
  \caption{A realisation of the Hammersley's lines process inside the rectangle $[0,a] \times [0,b]$. Black dots correspond to sources and white dots to annihilations.}%
  \label{fig:HL}
\end{figure}

\begin{figure}
  \begin{center}
    \includegraphics{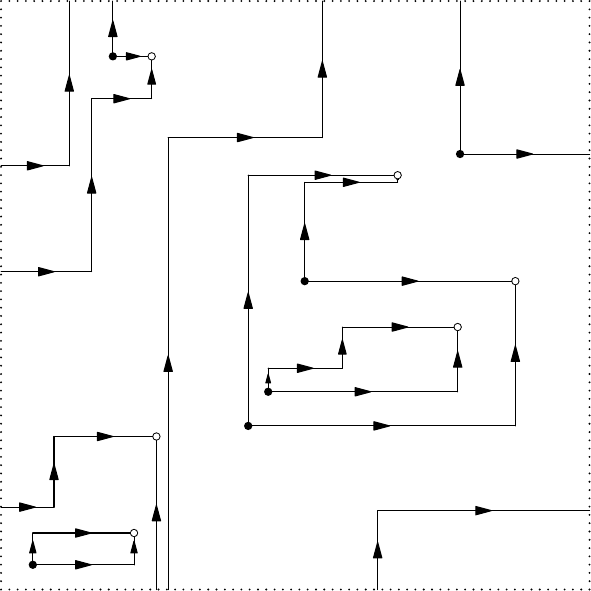}
  \end{center}
  \caption{An example of the new(?)\ \Loop model inside the square $[0,5] \times [0,5]$. Black dots correspond to sources and white dots to annihilations.}%
  \label{fig:LM}
\end{figure}

\paragraph{Content:}
In Section~\ref{sec:bm}, we define a class of \Bullet models indexed by $8$ parameters with only two possible speeds for particles. In Section~\ref{sec:ex}, we show that some examples given in the Introduction are \Bullet models. In Sections~\ref{sec:ic},~\ref{sec:st} and~\ref{sec:im}, we define formally the space-time diagram of a \Bullet model as well as the notion of stationarity in that context.\par
In Section~\ref{sec:qr}, we define the notion of quasi-reversibility and we state, among others, the two main theorems of the article, Theorems~\ref{thm:pi} and~\ref{thm:pi2}, and two important corollaries, Corollaries~\ref{cor:pi} and~\ref{cor:pi2}. Theorem~\ref{thm:pi} gives sufficient conditions for two \Bullet models to be $\text{rot}(\pi)$-quasi-reversible and Theorem~\ref{thm:pi2} does it for $\text{rot}(\pi/2)$-quasi-reversibility. In Section~\ref{sec:appli}, we apply them to the Colliding \BULLET Model with Creations and to the new \Loop model proving Propositions~\ref{prop:CBMCqr} and~\ref{prop:loop}.\par
Finally, in Section~\ref{sec:heu}, heuristics of both Theorems~\ref{thm:pi} and~\ref{thm:pi2} are given, and, in Section~\ref{sec:proof}, their proofs are done as well as the ones of Corollaries~\ref{cor:pi} and~\ref{cor:pi2}.

\section{Definition and presentation of the results}\label{sec:def}%
\subsection{\BULLET models}\label{sec:bm}%
The \Bullet models, considered in the article, are parameterised by the $8$ following parameters:
\begin{itemize}
\item the \emph{spontaneous creation rate} $\lambda_0 \in \RR_{\geq 0}$,
\item the \emph{vertical and horizontal split rates} $\lambda_V \in \RR_{\geq 0}$ and $\lambda_H \in \RR_{\geq 0}$,
\item the \emph{vertical and horizontal turn rate} $\tau_V \in \RR_{\geq 0}$ and $\tau_H \in \RR_{\geq 0}$,
\item the \emph{annihilation probability} $p_0 \in [0,1]$, and
\item  the \emph{vertical and horizontal coalescence probability} $p_V \in [0,1]$ and $p_H \in [0,1]$
\end{itemize}
such that $p_0+p_V+p_H \leq 1$. The vocabulary comes from the one used in~\cite{BCES23} about Poisson-Kirchhoff Systems.\par

From this parameter $\para = (\lambda_0,\lambda_V,\lambda_H, \tau_V, \tau_H, p_V,p_H, p_0)$ and an initial condition given on the $x$-and $y$-axes, we define a random system of horizontal and vertical lines inside the quarter plane $[0,\infty)^2$.\par

The \emph{initial condition} is a couple $(\mathcal{C}_X,\mathcal{C}_Y)$ where $\mathcal{C}_X$, resp.~$\mathcal{C}_Y$, is a (finite or enumerable) set of points on the non-negative $x$-axis, resp.\ on the non-negative $y$-axis. Hence, an element of $\mathcal{C}_X$ is a point $(x,0) \in \RR_{\geq 0} \times \{0\}$. Similarly, an element of $\mathcal{C}_Y$ is a point $(0,y) \in \{0\} \times \RR_{\geq 0}$. Moreover, we assume that both of them are locally finite, i.e.\ there is no accumulation point.\par

Finally, take also a PPP $\Xi_0$, called ex-nihilo creation points, on $(0,\infty)^2$ with intensity $\lambda_0 \di x \di y$. From the initial condition $(\mathcal{C}_X,\mathcal{C}_Y)$ and the parameter $\para = (\lambda_0,\lambda_V,\lambda_H, \tau_V, \tau_H, p_V,p_H, p_0)$, the \Bullet model is constructed with the following rules:
\begin{enumerate}
\item[$1_V$.] for each point $(x,0) \in \mathcal{C}_X$, a vertical line from the point $(x,0)$ goes up;
\item[$1_H$.] for each point $(0,y) \in \mathcal{C}_Y$, a horizontal line from the point $(0,y)$ goes right;
\item[$1_0$.] for each point $(x,y) \in \Xi_0$, a vertical line going up and a horizontal line going right are started from the point $(x,y)$.
\end{enumerate}
Along a vertical line, two events can occur:
\begin{enumerate}
\item[$2_V(a)$.] at rate $\lambda_V$, a horizontal line starts going right and the vertical one continues going up;
\item[$2_V(b)$.] at rate $\tau_V$, the vertical line turns right and becomes then a horizontal line.
\end{enumerate}
Similarly, along a horizontal line:
\begin{enumerate}
\item[$2_H(a)$.] at rate $\lambda_H$, a vertical line starts going up and the horizontal one continues going right;
\item[$2_H(b)$.] at rate $\tau_H$, the horizontal line turns up and becomes then a vertical line.
\end{enumerate}
Finally, when two lines (one vertical and one horizontal) intersect, four events can occur:
\begin{enumerate}
\item[3(a).] with probability $p_V$, the horizontal line stops and the vertical line continues going up;
\item[3(b).] with probability $p_H$, the vertical line stops and the horizontal line continues going right;
\item[3(c).] with probability $p_0$, both lines stop;
\item[3(d).] with probability $1-(p_V+p_H+p_0)$, both lines continue.
\end{enumerate}
Such construction gives then a random set $\mathcal{L}(\para,(\mathcal{C}_X,\mathcal{C}_Y))$ of segments and lines in the quarter-plane $[0,\infty)^2$.\par
\bigskip
\paragraph{Are any \Bullet models well defined?}
One could ask if this construction above is well defined, in the sense that there is no accumulation point on the whole quarter plane, that is equivalent to have a finite number of lines in any bounded regions.\par
\begin{proposition}
  Assume that the sets $\mathcal{C}_X$ and $\mathcal{C}_Y$ are locally finite a.s., 
  then, for any parameter $\para$, the random set $\mathcal{L}(\para,(\mathcal{C}_X,\mathcal{C}_Y))$ has no accumulation point a.s.\ in the quarter plane $[0,\infty)^2$.  
\end{proposition}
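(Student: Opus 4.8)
The plan is a first‑moment estimate: I will show that for every $T>0$ the expected number of lines of $\mathcal L(\para,(\mathcal C_X,\mathcal C_Y))$ that meet the square $[0,T]^2$ is finite. This suffices, since a line meeting $[0,T]^2$ runs, inside $[0,T]^2$, along a monotone staircase of total length at most $2T$, hence has there only finitely many turns and split points a.s., so $\mathcal L\cap[0,T]^2$ is a.s.\ a finite union of segments; a union over $T\in\NN$ then yields the absence of accumulation points. Two reductions prepare the ground. First, since annihilations and coalescences at crossings only stop lines that would otherwise keep going and spawning, under the natural coupling (same $\mathcal C_X,\mathcal C_Y,\Xi_0$, same planar Poisson processes governing the split and turn events) one has $\mathcal L(\para,\cdot)\subseteq\mathcal L(\para_\infty,\cdot)$, where $\para_\infty=(\lambda_0,\lambda_V,\lambda_H,\tau_V,\tau_H,0,0,0)$; so it is enough to treat $\para_\infty$. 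Second, I organise the lines of $\para_\infty$ by genealogy: the \emph{roots} are the lines issued from $\mathcal C_X$, from $\mathcal C_Y$, and the two lines issued from each point of $\Xi_0$; every other line is born at a split event on a unique parent, and the \emph{generation} of a line is its distance to a root. The construction produces exactly $\bigcup_{n\ge0}\mathcal L_n$ with $\mathcal L_n$ the set of generation‑$n$ lines, and the whole point is that this increasing union, although each $\mathcal L_n$ is locally finite, could a priori fail to be.

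The key device is the diagonal coordinate $\sigma:=x+y$: along any line $\sigma$ increases at unit speed whatever the current direction (so turns are irrelevant), a line whose birth point has $\sigma=s$ meets $[0,T]^2$ only if $s\le 2T$ and then runs a piece of length at most $2T-s$ there, and its split children form, in the variable $\sigma$, a Poisson process of intensity at most $\lambda:=\max(\lambda_V,\lambda_H)$. Hence, if $w_n(s)$ denotes the supremum over lines born with diagonal coordinate $s$ of the expected number of their generation‑$n$ (relative) descendants lying in $[0,T]^2$, then $w_0(s)\le\ind{s\le 2T}$ and, conditioning on the first split and using Campbell's formula,
\begin{equation*}
  w_n(s)\ \le\ \lambda\int_s^{2T} w_{n-1}(s')\,\di s'\qquad(n\ge1),
  \qquad\text{so}\qquad
  w_n(s)\ \le\ \frac{\bigl(\lambda(2T-s)\bigr)^{n}}{n!}\ \le\ \frac{(2\lambda T)^n}{n!}.
\end{equation*}
This is exactly where the only real difficulty lies: a naive bound of the form ``each line has at most $2\lambda T$ split children in expectation'' merely gives a supercritical Galton--Watson comparison with infinite expected total progeny, and one genuinely has to exploit the monotonicity of the lines — encoded here in the \emph{shrinking} upper limit $2T$ and in $\sigma$ being a clock — to obtain summability in $n$. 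Consequently $M(T):=\sup_{s\ge0}\sum_{n\ge0}w_n(s)\le e^{2\lambda T}<\infty$.

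To conclude, every line meeting $[0,T]^2$ is the descendant of a unique root, and by monotonicity that root is itself born in $[0,T]^2$; summing the bound $M(T)$ over such roots,
\begin{equation*}
  \mathbb{E}\bigl[\#\{\text{lines of }\mathcal L(\para_\infty,\cdot)\text{ meeting }[0,T]^2\}\bigr]
  \ \le\ M(T)\Bigl(\mathbb{E}\,\card{\mathcal C_X\cap[0,T]^2}+\mathbb{E}\,\card{\mathcal C_Y\cap[0,T]^2}+2\lambda_0T^2\Bigr),
\end{equation*}
which is finite because $\mathcal C_X$ and $\mathcal C_Y$ are locally finite a.s.\ and $\Xi_0$ is a Poisson process of finite intensity. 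Hence a.s.\ finitely many lines meet $[0,T]^2$; by the reduction to $\para_\infty$ the same holds for $\para$, and letting $T$ run over $\NN$ finishes the proof. The one step needing care beyond bookkeeping is the $\sigma$‑clock estimate above, which replaces the useless branching bound; everything else is routine.
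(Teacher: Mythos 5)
Your proof is correct, but it takes a genuinely different route from the paper: the paper's own proof is a one-line reduction, observing that any bullet model is a Poisson--Kirchhoff system (in the sense of [BCES23]) with intensity set $\{0\}$ and invoking Theorem~2 of that reference for well-definedness, whereas you give a self-contained argument. Your two main devices --- domination of the general model by the pure branching model $\para_\infty$ (no annihilation or coalescence) under a natural coupling, and the first-moment estimate in which the diagonal coordinate $\sigma=x+y$ serves as a unit-speed clock so that the generation-$n$ contribution is bounded by $(\lambda(2T-s))^n/n!$ rather than by a useless supercritical Galton--Watson bound --- are sound, and the conclusion (a.s.\ finitely many lines meet $[0,T]^2$, hence no accumulation point, then a union over $T\in\NN$) is exactly the notion of well-definedness the paper means. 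What each approach buys: yours is elementary and makes the mechanism of local finiteness transparent (and in fact re-proves, in this special case, the kind of estimate hidden inside the cited theorem), while the paper's buys brevity and generality by outsourcing the work to the PKS framework. One small point you may want to make explicit: defining $\mathcal{L}(\para,\cdot)$ itself requires resolving collision events in some order, so the clean way to phrase your reduction is to first build the a.s.\ locally finite branching set $\mathcal{L}(\para_\infty,\cdot)$ and then construct the killed model inside it (ordering events, e.g., by increasing $\sigma$), which your coupling implicitly does; this is a matter of presentation, not a gap.
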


\begin{proof}
  Any \Bullet model can be seen as a Poisson-Kirchhoff system defined in~\cite{BCES23} where the set of intensity is reduced to the singleton $\{0\}$, see~\cite[Section~6.4]{BCES23}. Hence, the Theorem~2~of~\cite{BCES23} applies and, so, any \Bullet model is well-defined.
\end{proof}

\subsection{Examples}\label{sec:ex}%
As the set of speeds considered here is of size two, we cannot model all the examples in the Introduction, in particular, those with more than two possible speeds. Nevertheless, up to some linear deformation of the plane, colliding \Bullet models with two speeds are \Bullet models with parameter $\para = (0,0,0,0,0,0,0,1)$.\par
\medskip

The CBMC defined in~\cite{BCGKP15} is the \Bullet model with parameter $\para_{\TABM} = (0,1,1,0,0,0,0,1)$ after a rotation of angle $\pi/4$ and a dilation of factor $1/\sqrt{2}$, see Figure~\ref{fig:rot}. Its generalisations when both particles do not always disappear are \Bullet models with parameter $\para = (0,1,1,0,0,p_{-1},p_1,p_0)$.\par
\medskip

\begin{figure}
  \begin{center}
    \begin{tabular}{>{\centering\arraybackslash}m{.4\textwidth} >{\centering\arraybackslash}m{.4\textwidth}}
      \includegraphics[width=0.4\textwidth]{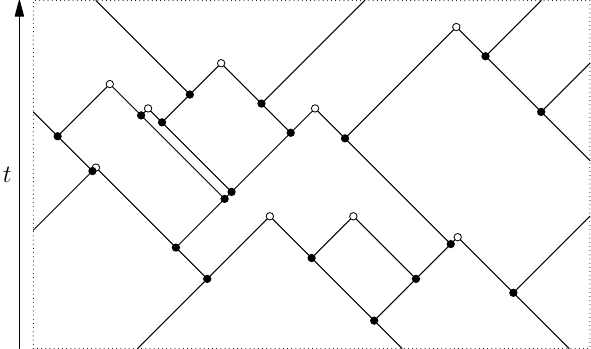} & \includegraphics[width=0.4\textwidth]{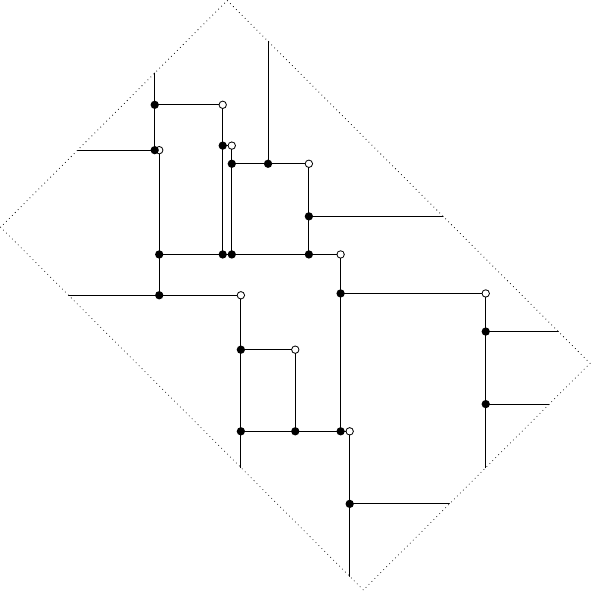}
    \end{tabular}
  \end{center}
  \caption{On the left, a realisation of the colliding \Bullet model with creations. On the right, the same realisation after a rotation of an angle $-\pi/4$ and a dilation.}%
  \label{fig:rot}
\end{figure}

To end with the example in the Introduction, we mention that the Hammersley's lines process corresponds to the \Bullet model with parameter $\para = (1,0,0,0,0,0,0,1)$, and the model of~\cite{BGGS18} in the geometric case to the
\Bullet model with parameter $\para = (1,0,0,0,0,0,1-\alpha,\alpha)$. The new(?)\ \Loop model, Hammersley's lines with turns, corresponds to the parameter $\para_{\text{loop}} = (1,0,0,1,1,0,0,1)$.\par
\medskip

Finally, let us introduce two other \Bullet models with turns. The first one with parameter $\para_V = (0,0,1,1,0,1,0,0)$ corresponds to the following rules:
\begin{itemize}
\item horizontal particles can only create vertical particles at rate $1$,
\item vertical ones can only turn right at rate $1$ and,
\item when an encounter occurs, the horizontal particle stops and the vertical one goes on.
\end{itemize}
The other model with parameter $\para_H = (0,1,0,0,1,0,1,0)$ is the same, replacing vertical by horizontal and vice versa. These two \Bullet models are the keys to understand the CBMC and its stationary measure. Indeed, in Section~\ref{sec:appli}, we deduce from Corollary~\ref{cor:pi}, that these two \Bullet models have a very simple and explicit stationary measure: two independent PPP; and, then, in Proposition~\ref{prop:CBMCqr}, we prove that their space-time diagrams rotated by an angle $-\pi/2$ and, resp.\ $\pi/2$, have the same distribution as the one of the CBMC under its non empty stationary measure.

\medskip

On Figure~\ref{fig:examples}, realisations of some mentioned examples are drawn.
\begin{figure}
  \begin{center}
    \begin{tabular}{ccc}
      \includegraphics[width=7cm]{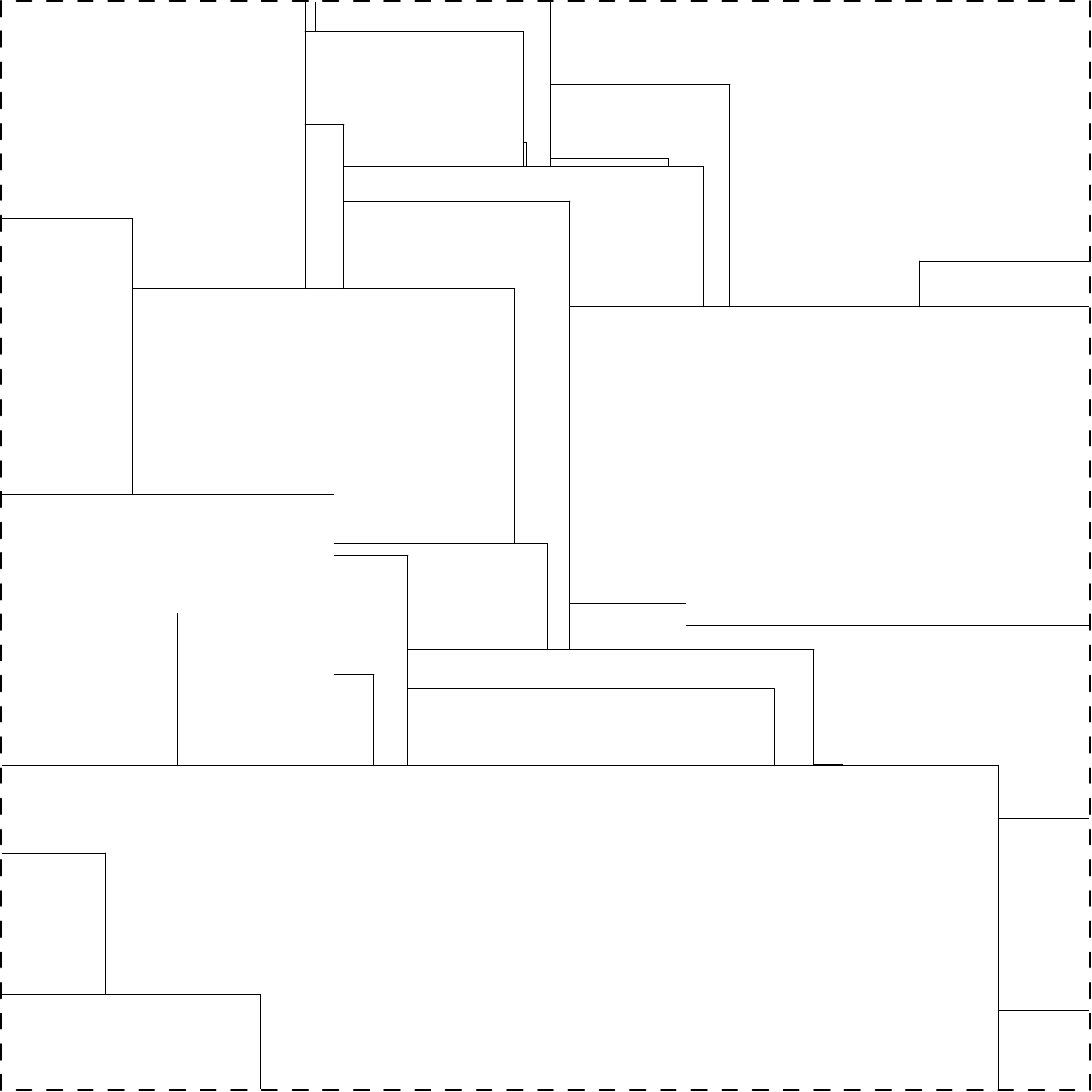} & \hspace{1cm} & \includegraphics[width=7cm]{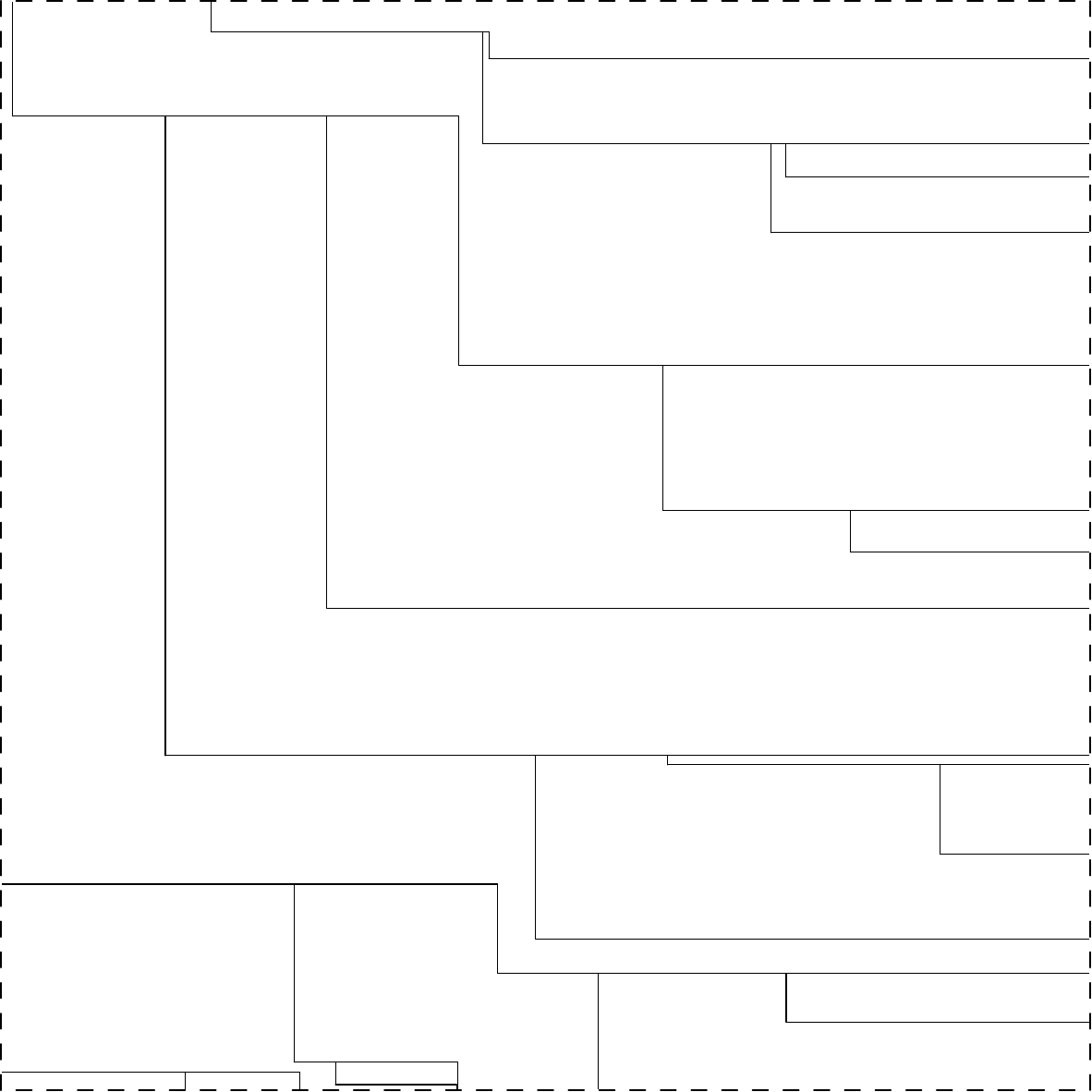}\\
      $\para_{\TABM} = (0,1,1,0,0,0,0,1)$ & & Model of~\cite{BGGS18} $\para = (1,0,0,0,0,0,3/4,1/4)$\\
      \vspace{1cm} & & \\
      \includegraphics[width=7cm]{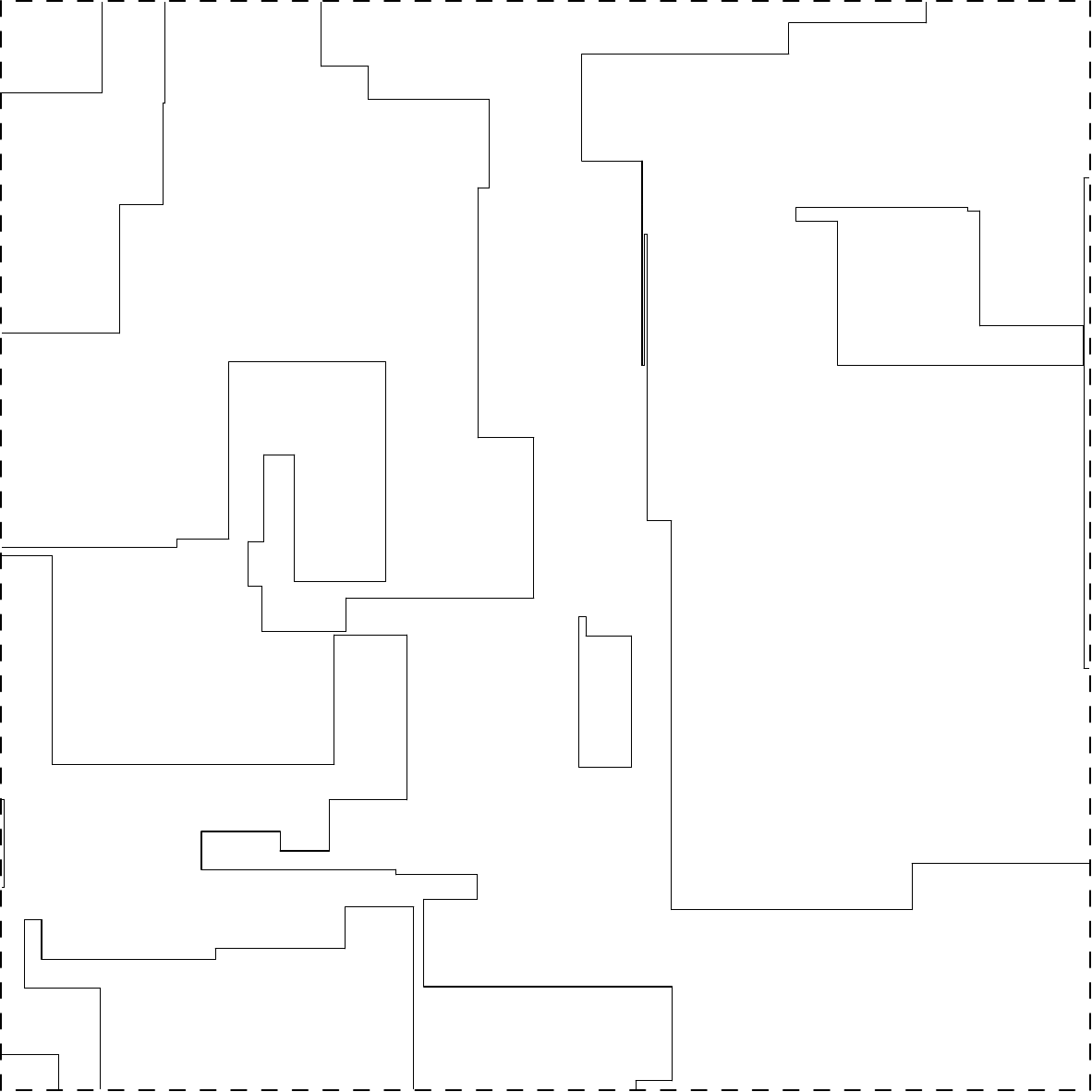} & & \includegraphics[width=7cm]{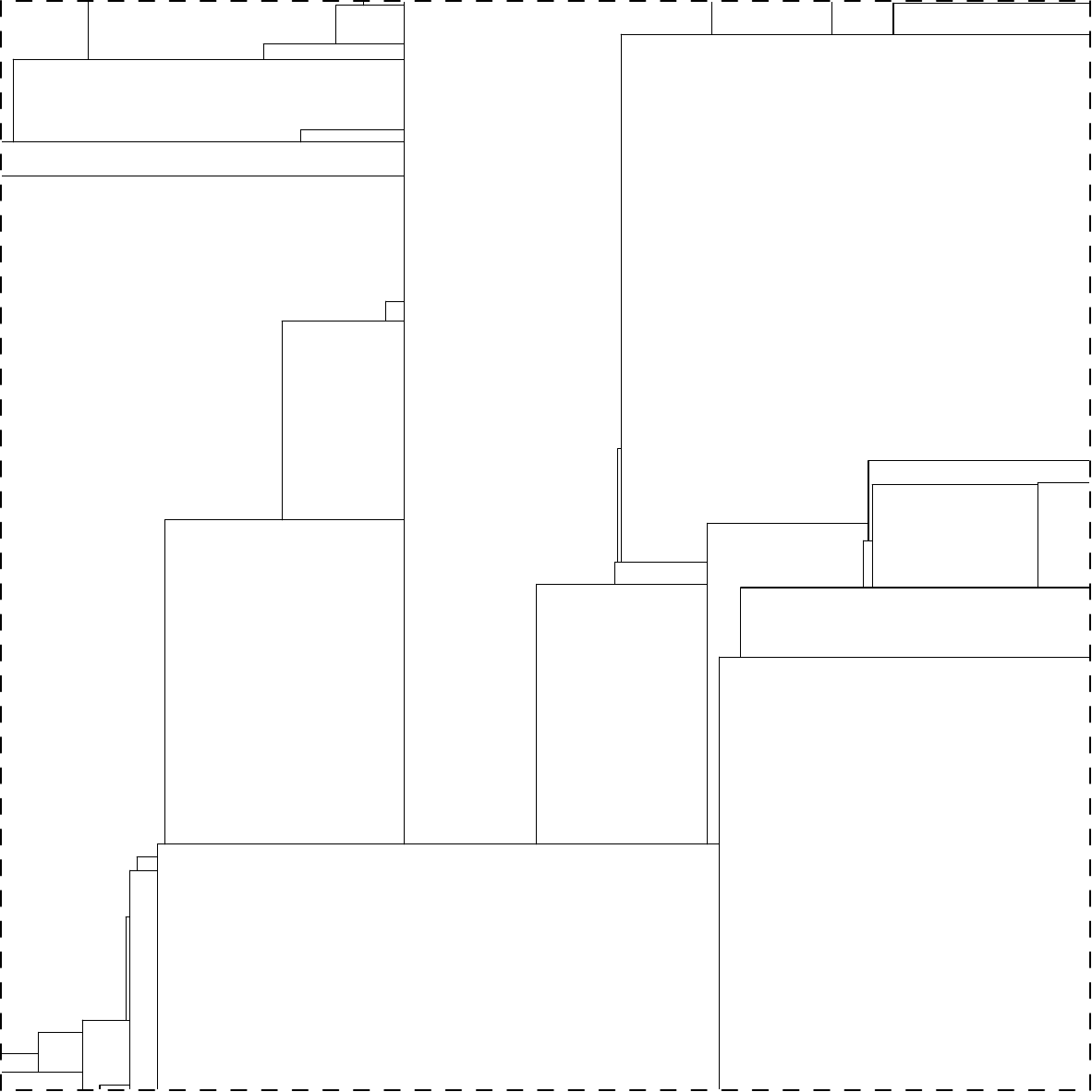}\\
      $\para_{\text{loop}} = (1,0,0,1,1,0,0,0)$ & & $\para_V = (0,0,1,1,0,1,0,0)$ \\
    \end{tabular}
  \end{center}
  \caption{Realisations (obtained by simulations) of \Bullet models on the rectangle $[0,5] \times [0,5]$ where the initial condition $(\mathcal{C}_X,\mathcal{C}_Y)$ are two independent PPPs with both intensities $1$. The code of simulations is given in Annex~\ref{ann:code}.} \label{fig:examples}
\end{figure}


\subsection{Initial conditions}\label{sec:ic}%
Now, we formalise the set of probability laws for the initial condition $(\mathcal{C}_X,\mathcal{C}_Y)$.\par

To consider random initial conditions $(\mathcal{C}_X,\mathcal{C}_Y)$, the law of $(\mathcal{C}_X,\mathcal{C}_Y)$ is represented here as a random point process on the line $\RR$. Basically, a random point process on the line $\RR$ is a probability measure on the set of countable locally finite sets of points. Hence, a realisation of such a process is a set of points $X = \{x_i \in \RR : i \in I\}$ where $I$ is countable and there is no accumulation point in $X$. The law of a random point process could be given by the law of its \emph{counting measure} $N$, that is, the knowledge of
\begin{displaymath}
  (\prob{N([a,b])=i} = \prob{ \card{X \cap [a,b]} = i})_{i \in \NN, a \in \RR,b \in \RR, a \leq b},
\end{displaymath}
see~\cite[Chapter~3.1]{DVJ03} or~\cite[Chapter~1]{Bremaud20} for some reference books on random point processes. The most famous point process is the Poisson Point Process (PPP). The law of the counting measure of a PPP on $\RR$ with intensity $\lambda \in [0,\infty)$ is, for any $i \in \NN$, $a,b \in \RR$, $a \leq b$,
\begin{displaymath}
  \prob{ \card{X \cap [a,b]} = i} = e^{-\lambda(b-a)} \frac{\lambda^i (b-a)^i}{i!}.
\end{displaymath}
\par
Now, let $X = \{x_i \in \RR : i\in I\}$ be a random point process of counting measure $N$. We define the initial condition $(\mathcal{C}_X,\mathcal{C}_Y)$ by
\begin{equation}
  \mathcal{C}_X = \{(x,0) : x \in X \text{ and } x \geq 0\} \text{ and }
  \mathcal{C}_Y = \{(0,-x) : x \in X \text{ and } x < 0\}.
\end{equation}
In the sequel of this article, we do the abuse of language writing that the initial condition $(\mathcal{C}_X,\mathcal{C}_Y)$ is distributed according to the counting measure~$N$.\par
\smallskip
In the following, some important results concern specific stationary measures where $\mathcal{C}_X$ and $\mathcal{C}_Y$ are two independent PPP with intensities $\nu_V$ and $\nu_H$. Hence, let us introduce the notation $N_{\PPP(\nu_H,\nu_V)}$ for their counting measure, i.e.\ for any $a \in \RR$, $b \in \RR$, $a<b$,
\begin{equation}
  \prob{N_{\PPP(\nu_H,\nu_V)}([a,b]) = i} = \begin{cases}
    e^{-\nu_H (b-a)} \frac{\nu_H^i (b-a)^i}{i!} & \text{if } b \leq 0,\\
    e^{-\nu_V (b-a)} \frac{\nu_V^i (b-a)^i}{i!} & \text{if } 0 \leq a,\\
    \sum_{j=0}^i e^{a \nu_H - b \nu_V} \frac{\nu_H^j \nu_V^{i-j} (-a)^j b^{i-j}}{j! (i-j)!} & \text{if } a < 0 < b.
  \end{cases}
\end{equation}

\subsection{Notations for the space-time diagram} \label{sec:st}
Let us introduce some notations about the space-time diagrams of \Bullet models used all along the article.\par
Take a \Bullet model with parameter $\para$ and a counting measure $N$ on $\RR$. We denote by $\mathcal{L}_{[0,\infty)^2}(\para,N)$ the random set of segments and lines in the quarter-plane obtained by applying the \Bullet model with parameter $\para$ to the initial condition $(\mathcal{C}_X,\mathcal{C}_Y) \sim N$. A formal representation of its set of configurations is given in the beginning of Section~\ref{sec:dens}.\par
For any $(a,b) \in \RR$, let us define the set of segments $\mathcal{L}_{[a,\infty) \times [b,\infty)}(\para,N)$ on $\{(x,y) : x \geq a ,y \geq b\}$ obtained by the \Bullet model with parameter $\para$ with the initial condition $(\mathcal{C}_X,\mathcal{C}_Y) \sim N$ where $\mathcal{C}_X$ is the random point process on the line $\{(x+a,b) : x \geq 0\}$ and $\mathcal{C}_Y$ is the random point process on the line $\{(a,y+b) : y \geq 0\}$. It is $\mathcal{L}_{[0,\infty)^2}(\para,N)$ translated by the vector $(a,b)$.\par 
For any $a,b \in \RR$, for any $a_1,b_1,a_2,b_2$ such that $a \leq a_1 < a_2 \leq \infty$ and $b \leq b_1 < b_2 \leq \infty$, we denote by $\mathcal{L}_{[a,\infty)\times[b,\infty)}(\para,N)|_{[a_1,a_2]\times[b_1,b_2]}$ the restriction of $\mathcal{L}_{[a,\infty)\times[b,\infty)}(\para,N)$ to the rectangle $[a_1,a_2] \times [b_1,b_2]$.\par

\subsection{Stationary measures} \label{sec:im}
The counting measure $N$ is said to be \emph{stationary} by the \Bullet model with parameter $\para$ if, for almost any $(a,b) \in \RR_+^2$,
\begin{equation}
  \mathcal{L}_{[0,\infty)^2}(\para,N)|_{[a,\infty)\times[b,\infty)} \eqd \mathcal{L}_{[a,\infty)\times[b,\infty)}(\para,N).
\end{equation}\par
Now, take a \Bullet model with parameter $\para$ under one of its stationary measures $N$. By Kolmogorov's extension theorem, we can define a random set $\mathcal{L}_{\RR^2}(\para,N)$ of segments and lines on the whole plane $\RR^2$. Let denote $\rho(\para,N)$ its probability law.\par

\paragraph{Stationary measure of \Bullet model with parameter $\para_{\TABM}$:}
As mentioned in the Introduction, a motivation of this article is to study the stationary measure of the \Bullet model with parameter $\para_{\TABM}$.\par
With the point of view taken here, its stationary measure is quite simple:
\begin{proposition} \label{prop:TABM}
  The counting measure $N_{\PPP(1,1)}$ is stationary by the \Bullet model with parameter $\para_{\TABM}$.  
\end{proposition}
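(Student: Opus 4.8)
The plan is to prove stationarity straight from the definition, by reducing it to an invariance property of a one‑parameter family of point processes obtained by scanning the quarter plane with an $L$‑shaped front, and then verifying that invariance by an infinitesimal (generator) computation. Recall that for $\para_{\TABM}=(0,1,1,0,0,0,0,1)$ there are no ex‑nihilo creations ($\lambda_0=0$), no turns ($\tau_V=\tau_H=0$), and every intersection is an annihilation ($p_V=p_H=0$, $p_0=1$); moreover the parameter is invariant under exchanging the vertical and horizontal roles, i.e.\ under reflection across the diagonal $\{y=x\}$, and so is $N_{\PPP(1,1)}$.

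The first step is a Markov property of the construction: conditionally on the set of lines of $\mathcal{L}_{[0,\infty)^2}(\para_{\TABM},N)$ crossing the boundary of $[a,\infty)\times[b,\infty)$ — namely the vertical lines crossing $\{y=b,\ x\ge a\}$ and the horizontal lines crossing $\{x=a,\ y\ge b\}$ — the restriction $\mathcal{L}_{[0,\infty)^2}(\para_{\TABM},N)|_{[a,\infty)\times[b,\infty)}$ has the same law as the model run on $[a,\infty)\times[b,\infty)$ started from that boundary data, because the splitting/turning Poisson clocks and the collision choices used above and to the right are independent of everything below and to the left. Granting this, $N_{\PPP(1,1)}$ is stationary as soon as, for every $(a,b)$, the vertical lines crossing $\{y=b,\ x\ge a\}$ form a PPP of intensity $1$ and the horizontal lines crossing $\{x=a,\ y\ge b\}$ form an independent PPP of intensity $1$. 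By the diagonal symmetry the ``push the left edge to $x=a$'' move is the mirror image of the ``raise the bottom edge to $y=b$'' move, and a general $(a,b)$ is reached by composing the two; so it is enough to analyse the vertical lines crossing a horizontal line $\{y=t\}$.

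Let $V_t\subset[0,\infty)$ be the set of abscissas of the vertical lines present at height $t$. Then $(V_t)_{t\ge 0}$ is a Markov jump process with $V_0\sim$ PPP of intensity $1$ (the initial condition $\mathcal{C}_X$) and two families of jumps, each at rate $1$: (A) a horizontal line enters from the left edge (a point of $\mathcal{C}_Y$) and, creating new vertical lines at unit rate as it goes right, annihilates the leftmost vertical line — net effect: delete $\min V_t$ and insert a fresh PPP of intensity $1$ on $[0,\min V_t)$; (B) for each $v\in V_t$, the line at $v$ emits a horizontal line which, again creating vertical lines at unit rate, annihilates the successor $v^{+}$ of $v$ in $V_t$ — net effect: delete $v^{+}$ and insert a fresh PPP of intensity $1$ on $(v,v^{+})$, the line at $v$ itself surviving. (One checks here that an emitted horizontal line never meets a vertical line it has itself just created, as these lie behind its current position; this uses $\tau_H=p_V=p_H=0$, $p_0=1$, and a.s.\ finiteness of the creation cascade follows from the well‑posedness Proposition of Section~\ref{sec:bm}.) The heart of the matter is that the PPP of intensity $1$ on $[0,\infty)$ is invariant for this process, and in fact each of the two generator terms vanishes separately at the PPP. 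For a jump of type (A), condition on $m=\min V_t$: the trace of $V_t$ on $(m,\infty)$ is a PPP of intensity $1$ there, independent of $m$, and after the jump $[0,m)$ carries a fresh PPP of intensity $1$ while $(m,\infty)$ is unchanged, so the new configuration is a PPP of intensity $1$ on $[0,\infty)\setminus\{m\}$, independent of $m$. For a jump of type (B), by the Mecke equation it suffices that for fixed $v$ the map ``fire at $v$'' sends $V\cup\{v\}$, with $V\sim$ PPP of intensity $1$, to a configuration of the same law: conditioning on $v^{+}$, the interval $(v,v^{+})$ gets a fresh PPP of intensity $1$ and $(v^{+},\infty)$ keeps its PPP of intensity $1$, so $(v,\infty)$ again carries a PPP of intensity $1$ independent of $v^{+}$, and the whole configuration is $\mathrm{PPP}|_{[0,v)}\sqcup\{v\}\sqcup\mathrm{PPP}|_{(v,\infty)}\eqd V\cup\{v\}$. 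Hence $V_t\sim$ PPP of intensity $1$ for all $t$; its independence from the horizontal lines crossing the left edge above height $t$ is immediate since $V_t$ depends only on $\mathcal{C}_X$ and the Poisson data in $[0,\infty)\times[0,t]$; combining with the Markov property gives the stationarity of $N_{\PPP(1,1)}$.

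I expect the only genuine difficulty to be organisational rather than probabilistic: making the ``net effect'' descriptions of the jumps (A) and (B) precise (the no‑self‑collision observation, the a.s.\ finiteness of the cascade of creations), checking that $(V_t)$ is truly Markov with the stated generator, and phrasing the two invariance identities cleanly — for instance through the probability generating functional $u\mapsto\mathbb{E}\prod_{x\in V_t}u(x)$, for which the PPP of intensity $1$ is the unique fixed point — rather than through any delicate estimate. As a consistency check and an alternative route, the statement will also follow from the general theory of Section~\ref{sec:qr}: $\para_{\TABM}$ is symmetric under the vertical/horizontal exchange and meets the hypotheses of the $\mathrm{rot}(\pi/2)$ results, so Proposition~\ref{prop:TABM} can equally be deduced from Corollary~\ref{cor:pi2}; I would nonetheless present the self‑contained argument above.
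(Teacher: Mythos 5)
Your argument is correct in substance but follows a genuinely different route from the paper. The paper never analyses $\para_{\TABM}$ directly: it first applies Corollary~\ref{cor:pi} to the auxiliary model $\para_V$ to get that $N_{\PPP(1,1)}$ is stationary for $\para_V$ and that its $s_\pi$-reverse is $(\para_H,N_{\PPP(1,1)})$, and then applies Corollary~\ref{cor:pi2} to $\para_H$, whose $s_{\pi/2}$-reverse is $(\para_{\TABM},N_{\PPP(1,1)})$ (Proposition~\ref{prop:CBMCqr}); the transfer of stationarity built into Theorem~\ref{thm:pi2} then yields Proposition~\ref{prop:TABM}. You instead slice the quarter plane along $\{y=t\}$ and prove invariance of $\mathrm{PPP}(1)$ for the cross-section process by exhibiting the two jump maps (A) and (B) and checking, correctly, that each preserves $\mathrm{PPP}(1)$ (via the conditional structure beyond the minimum, and the Mecke equation for (B)); together with the graphical Markov property and the diagonal symmetry of $\para_{\TABM}$ this does give stationarity. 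What the paper's route buys is that all the measure-theoretic work is done once, on finite rectangles via the explicit density of Lemma~\ref{lem:density}, so no continuous-time generator ever appears; what your route buys is a self-contained, more probabilistic proof that also identifies the cross-section dynamics as a Hammersley-type interacting particle system, which is of independent interest.

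Two caveats. First, the step from ``both generator terms vanish at $\mathrm{PPP}(1)$'' to ``$V_t\sim\mathrm{PPP}(1)$ for all $t$'' is more than organisational: since every point fires at rate $1$ and $V_t$ is infinite, the total jump rate is infinite, so the standard invariance criterion for jump processes does not apply verbatim; you need a localisation to finite windows (exploiting that a jump at $v$ only affects $(v,v^{+}]$, so influence on $[0,L]$ comes from finitely many jumps per unit time plus a controlled amount of information to the right of $L$), or a generating-functional/approximation argument, and you should also dispose of the degenerate cases $V_t=\emptyset$ and $v=\max V_t$. Second, your closing remark is inaccurate as stated: Corollary~\ref{cor:pi2} (and Corollary~\ref{cor:pi}) do \emph{not} apply to $\para_{\TABM}$ itself, because for this parameter $B_V=B_H=0$; the deduction from the general theory necessarily passes through $\para_V$ and $\para_H$ as the paper does, or through Theorems~\ref{thm:pi} and~\ref{thm:pi2} with a guessed reverse parameter, not through the corollaries applied to $\para_{\TABM}$.
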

But, the stationary measure defined in~\cite{BCGKP15} corresponds to the law of an anti-diagonal line in our case and this one is still complicated to study. Nevertheless, due to Proposition~\ref{prop:TABM}, it begins to be doable.\par
Proof of Proposition~\ref{prop:TABM} is deduced from Proposition~\ref{prop:CBMCqr}, see Section~\ref{sec:appli}.

\section{Quasi-reversibility of \Bullet models} \label{sec:qr}
Consider $D_4$ the dihedral group, the group of the symmetries of the square $[-1,1]^2$. The group $D_4$ has $8$ elements: $4$ rotations of centre $(0,0)$ and angle $k\pi/2$ with $k \in \{0,1,2,3\}$, we denote them $s_{k\pi/2}$'s, and $4$ reflections around the diagonal $\{(x,x) : x \in \RR\}$ (we denote it $r$), the $x$-axis $\{(x,0) : x \in \RR\}$ (that is $s_{3\pi/2} \circ r = r \circ s_{\pi/2}$), the $y$-axis $\{(0,x) : x \in \RR\}$ (that is $s_{\pi/2} \circ r = r \circ s_{3\pi/2}$),  and the anti-diagonal $\{(x,-x) : x \in \RR\}$ (that is $s_{\pi} \circ r = r \circ s_{\pi}$). Hence, any element of $D_4$ is also an automorphism of $\RR^2$. For any $g \in D_4$ and any point $(x,y) \in \RR^2$, we denote by $g(x,y)$ its image by $g$. This is illustrated on Figure~\ref{fig:dihedral}. And, so, for any subset $A$ of $\RR^2$, we define the subset $g(A) = \{g(x,y) : (x,y) \in A\}$.\par
\medskip
\begin{figure}
  \begin{center}
    \includegraphics{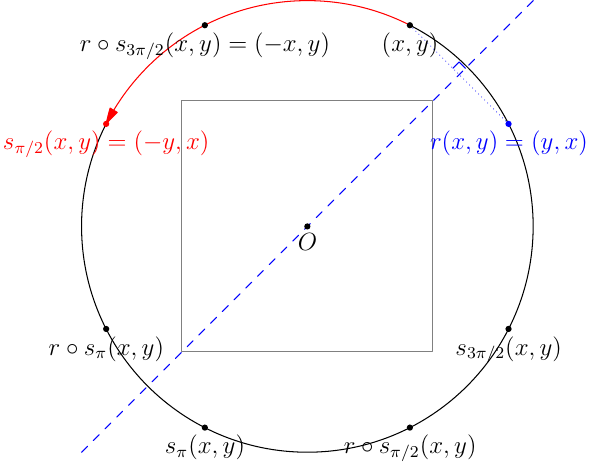}
  \end{center}
  \caption{A point and its eight images by the elements of $D_4$.}%
  \label{fig:dihedral}
\end{figure}

Let $g \in D_4$ be a symmetry of the square. A \Bullet model with parameter $\para$ under one of its stationary measures $N$ is said to be \emph{$g$-quasi-reversible} if there exists a \Bullet model with parameter $\tilde{\para}$ under one of its stationary measures $\tilde{N}$ such that the set $\mathcal{L}_{\RR^2}(\tilde{\para},\tilde{N}) \eqd g(\mathcal{L}_{\RR^2}(\para,N))$. In that case, the couple $(\tilde{\para},\tilde{N})$ is called the \emph{$g$-reverse} of $(\para,N)$. When, in addition, the couples are equal $(\tilde{\para},\tilde{N}) = (\para,N)$, we simply say that $(\para,N)$ is \emph{$g$-reversible}.\par
\medskip
The notion of $g$-quasi-reversibility for any $g \in D_4$ has been introduced first in~\cite{CM20} to study PCA with memory two.\par

\subsection{$r$-quasi-reversibility} \label{sec:r}
First, let us remark that any \Bullet model under any of its stationary measure is $r$-quasi-reversible.
\begin{proposition}[The $r$-quasi-reversibility] \label{prop:r}
  Consider any \Bullet model with parameter $\para$ under one of its stationary measures $N$. Then it is $r$-quasi-reversible and its $r$-reverse $(\tilde{\para},\tilde{N})$ is
  \begin{enumerate}
  \item for any $a,b \in \RR$, $\tilde{N}([a,b]) = N([-b,-a])$,
  \item $\tilde{\lambda}_0 = \lambda_0$, $\tilde{\lambda}_V = \lambda_H$ and $\tilde{\lambda}_H = \lambda_V$,
  \item $\tilde{\tau}_V = \tau_H$ and $\tilde{\tau}_H = \tau_V$,
  \item $\tilde{p}_V = p_H$, $\tilde{p}_H = p_V$ and $\tilde{p}_0 = p_0$.
  \end{enumerate}
\end{proposition}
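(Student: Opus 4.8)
The plan is to exploit the fact that the reflection $r$ across the main diagonal $\{(x,x):x\in\RR\}$ swaps the roles of the horizontal and vertical directions while reversing the orientation of the $x$-axis into the $y$-axis (and vice versa). Concretely, $r(x,y)=(y,x)$, so a vertical line going up in the quarter plane is sent to a horizontal line going right, and a horizontal line going right is sent to a vertical line going up. The whole construction of the \Bullet model is symmetric under this exchange once one also swaps the two components of the parameter: spontaneous creations (governed by $\lambda_0$) are isotropic so $\tilde\lambda_0=\lambda_0$; the vertical split event $2_V(a)$ (rate $\lambda_V$, spawning a horizontal line) becomes the horizontal split event $2_H(a)$, giving $\tilde\lambda_H=\lambda_V$ and symmetrically $\tilde\lambda_V=\lambda_H$; the vertical turn $2_V(b)$ becomes the horizontal turn, so $\tilde\tau_H=\tau_V$ and $\tilde\tau_V=\tau_H$; and at a collision, rule 3(a) (with probability $p_V$, the vertical survives) becomes rule 3(b) for the reflected picture, so $\tilde p_H=p_V$, $\tilde p_V=p_H$, while $\tilde p_0=p_0$ since the simultaneous-annihilation outcome 3(c) is symmetric. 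These are exactly the identities claimed in items 2–4.

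The first step is to make the pathwise coupling precise on a finite rectangle. Fix $a,b\ge 0$ and work with $\mathcal{L}_{[0,\infty)^2}(\para,N)$ restricted to a large square. Apply $r$ to everything: the source set $\mathcal{C}_X$ on the $x$-axis, which comes from the restriction of $X$ to $\{x\ge0\}$ via the counting measure $N$, is sent by $r$ to a point set on the $y$-axis, i.e.\ a new $\tilde{\mathcal{C}}_Y$; and $\mathcal{C}_Y$, which came from $X\cap\{x<0\}$ read as $(0,-x)$, is sent to a new $\tilde{\mathcal{C}}_X$ read as $(-x,0)$ for $x<0$, i.e.\ $(|x|,0)$. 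Tracking the sign conventions of the definition $\mathcal{C}_X=\{(x,0):x\in X,\ x\ge0\}$, $\mathcal{C}_Y=\{(0,-x):x\in X,\ x<0\}$, one checks that the reflected initial condition is exactly the one generated by the point process $\tilde X=\{-x:x\in X\}$, whose counting measure is $\tilde N([a,b])=N([-b,-a])$ — this is item 1. The ex-nihilo PPP $\Xi_0$ with intensity $\lambda_0\,\di x\,\di y$ is invariant in law under $r$ (Lebesgue measure on $\RR^2$ is), and the internal randomness of the split/turn/collision rules transports outcome-by-outcome under the parameter swap described above. Hence, pathwise, $r(\mathcal{L}_{[0,\infty)^2}(\para,N))\eqd\mathcal{L}_{[0,\infty)^2}(\tilde\para,\tilde N)$ as random sets on the quarter plane, and the same holds for any translated/restricted version.

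The second step is to promote this to a statement about the whole-plane diagram $\mathcal{L}_{\RR^2}$. Since $N$ is stationary for $\para$, one must first check that $\tilde N$ is stationary for $\tilde\para$: this follows by applying the pathwise identity of the previous paragraph on shifted quarter planes $[a,\infty)\times[b,\infty)$ and comparing with the defining equation of stationarity in Section~\ref{sec:im}, using that $r$ carries the family of regions $[a,\infty)\times[b,\infty)$ to $[b,\infty)\times[a,\infty)$. Once stationarity of $\tilde N$ is in hand, the whole-plane diagrams $\mathcal{L}_{\RR^2}(\para,N)$ and $\mathcal{L}_{\RR^2}(\tilde\para,\tilde N)$ are both well defined by Kolmogorov's extension theorem, and the finite-dimensional distributions (i.e.\ the laws of the restrictions to bounded rectangles) agree by the coupling, after noting that the finite-dimensional consistency used in Kolmogorov's theorem is compatible with the reflection. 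This yields $\mathcal{L}_{\RR^2}(\tilde\para,\tilde N)\eqd r(\mathcal{L}_{\RR^2}(\para,N))$, which is precisely $r$-quasi-reversibility with the stated reverse $(\tilde\para,\tilde N)$.

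The main obstacle is not conceptual but bookkeeping: getting every sign and orientation convention right in the definition of $(\mathcal{C}_X,\mathcal{C}_Y)$ from $N$, so that the reflected initial data is genuinely $N([-b,-a])$ and not, say, a reflection composed with an unwanted shift, and checking that the ``going up/going right'' orientations of lines are consistently swapped by $r$ rather than reversed. A secondary point requiring a little care is that $r$ maps the quarter plane $[0,\infty)^2$ to itself, so unlike the $\pi/2$-rotation case treated later there is no need to re-root the construction at a different corner; this is what makes the $r$-quasi-reversibility hold for \emph{every} parameter and every stationary measure, with no hypothesis, and the proof should emphasize that the only input used is the left-right symmetry built into the model's definition.
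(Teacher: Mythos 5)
Your argument is correct and is essentially the paper's own proof: the paper disposes of this proposition in one line (``reversing the $x$- and $y$-axes, the \Bullet model is a \Bullet model in which the vertical and horizontal particles swap their roles''), and your proposal simply spells out that same symmetry in detail, including the bookkeeping for $\tilde{N}([a,b])=N([-b,-a])$, the $r$-invariance of $\Xi_0$, and the transfer of stationarity to the whole-plane diagram. No gap; you have just written out the verification the paper leaves implicit.
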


\begin{proof}
  Reversing the $x$-and $y$-axes, the \Bullet model is a \Bullet model in which the vertical and horizontal particles swap their roles.
\end{proof}

\subsection{$s_\pi$-quasi-reversibility} \label{sec:pi}
Now we state Theorem~\ref{thm:pi} that gives sufficient conditions for two \Bullet models with parameter $\para$ and $\tilde{\para}$ to be $s_\pi$-quasi-reversible. In that case, both it and its reverse get a stationary measure that is two independent PPP.\par
We also state one of its corollaries, Corollary~\ref{cor:pi}, that gives sufficient conditions for a \Bullet model with parameter $\para$ to be $s_\pi$-quasi-reversible. In that case, we can even compute from $\para$ both one of its stationary measures and the parameter $\tilde{\para}$ of its $s_\pi$-reverse.\par

\begin{theorem} \label{thm:pi}
  Consider two \Bullet models, one with parameter $\para$ and the other one with parameter $\tilde{\para}$. Suppose that there exist $\nu_V,\nu_H \in \RR_{\geq 0}$ such that the five following conditions are satisfied:
  \begin{enumerate}
  \item $\tilde{\lambda}_H + \tilde{\tau}_H = \lambda_H + \tau_H$ and $\tilde{\lambda}_V + \tilde{\tau}_V = \lambda_V + \tau_V$,
  \item $\tilde{\lambda}_0 = \lambda_0 = \nu_V \nu_H p_0$, $\tilde{\lambda}_V = \nu_H p_V$ and $\tilde{\lambda}_H = \nu_V p_H$,
  \item $\nu_V \tilde{\tau}_V = \nu_H \tau_H$ and $\nu_H \tilde{\tau}_H = \nu_V \tau_V$,
  \item $\nu_H \tilde{p}_V = \lambda_V$, $\nu_V \tilde{p}_H = \lambda_H$ and $\tilde{p}_0 = p_0$,
  \item $\tilde{p}_V + \tilde{p}_H + \tilde{p}_0 = p_V + p_H + p_0$.
  \end{enumerate}
  
  Then, the measure $N_{\PPP(\nu_H,\nu_V)}$ is stationary for both of them and the couples $(\para,N_{\PPP(\nu_H,\nu_V)})$ and $(\tilde{\para},N_{\PPP(\nu_H,\nu_V)})$ are $s_\pi$-reverse. 
\end{theorem}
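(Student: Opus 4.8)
The plan is to establish the two assertions of the theorem --- stationarity of $N_{\PPP(\nu_H,\nu_V)}$ for each model, and the $s_\pi$-reverse relation --- simultaneously, by working in a finite box and letting the box grow. The natural object to track is the marginal law of the set of lines crossing the boundary of a rectangle $[a_1,a_2]\times[b_1,b_2]$: for a \Bullet model started from two independent PPP on the $x$- and $y$-axes, one wants to show that, \emph{if} the configuration of lines entering the left edge and the bottom edge of a box is distributed as two independent PPP with intensities $\nu_V$ (vertical lines, i.e.\ points on the bottom edge) and $\nu_H$ (horizontal lines, points on the left edge), \emph{then} the configuration of lines exiting the top edge and the right edge is again two independent PPP with the same intensities. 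This ``conservation of the product-PPP law across a box'' is exactly what stationarity of $N_{\PPP(\nu_H,\nu_V)}$ amounts to, by the definition in Section~\ref{sec:im} and an approximation/consistency argument.

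First I would reduce to an infinitesimal box, i.e.\ to a box of size $\varepsilon\times\varepsilon$ (or, more precisely, treat separately boxes of size $\varepsilon\times\ell$ and $\ell\times\varepsilon$ and combine). In such a thin box only one ``event'' happens to first order in $\varepsilon$: either a point of $\Xi_0$ (rate $\lambda_0$), or a split on a horizontal line (rate $\lambda_H$) or vertical line (rate $\lambda_V$), or a turn (rates $\tau_H$, $\tau_V$), or a collision between an incoming horizontal and an incoming vertical line, resolved with probabilities $p_V,p_H,p_0,1-p_V-p_H-p_0$. One then writes down, to order $\varepsilon$, the generator of the Markov process that maps ``lines on the bottom+left edges'' to ``lines on the top+right edges'', and checks that the product of a PPP($\nu_H$) on the left edge and a PPP($\nu_V$) on the bottom edge is left invariant. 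This is the content of the heuristics announced in Section~\ref{sec:heu}, and the five conditions are precisely the balance equations: condition~(2) balances the creation of a horizontal/vertical line (a point appearing on the right/top edge of the box) --- a point is created on the top edge by a vertical split at rate $\lambda_V$ or by absorbing a vertical incoming line at a collision with a horizontal line where the horizontal survives, contributing $\nu_V\nu_H p_H$... wait, rather the book-keeping is: density of horizontal lines gained $= \lambda_H + \nu_V p_V$ (split, or a collision where vertical survives and ``passes its horizontal partner'') must match density lost, etc.; conditions~(1) and~(5) express that the total rate at which a line of a given orientation ``interacts'' is preserved under time-reversal; condition~(3) is the turn-balance; condition~(4) identifies $\tilde p_V,\tilde p_H,\tilde p_0$. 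Concretely I would verify that, with $\tilde\para$ defined by (1)--(5), the forward generator of $\para$ acting on the product-Poisson functional equals the \emph{adjoint} (time-reversed) generator of $\tilde\para$ on the same functional, which gives both invariance and the reverse identity in one stroke.

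Then, to go from the infinitesimal statement to the full theorem, I would: (i) use that $\mathcal{L}_{[0,\infty)^2}(\para,N)$ restricted to a box is obtained by concatenating infinitesimal slices, so invariance of the product-PPP law across each slice propagates (a standard monotone/limiting argument, using that the model is well defined by the Proposition after Section~\ref{sec:bm}, hence no accumulation points, so the concatenation is a.s.\ legitimate); (ii) conclude that the law of the lines crossing any vertical or horizontal segment at position $(a,\cdot)$ or $(\cdot,b)$ inside the quarter-plane coincides with the boundary law, which is the definition of stationarity of $N_{\PPP(\nu_H,\nu_V)}$; and (iii) for the $s_\pi$-reverse claim, note that rotating the space-time diagram by $\pi$ swaps ``incoming'' and ``outgoing'' edges of every box, and the slice-wise identification of the $\para$-forward dynamics with the $\tilde\para$-time-reversed dynamics established in the previous paragraph says exactly that $s_\pi(\mathcal{L}_{\RR^2}(\para,N_{\PPP(\nu_H,\nu_V)}))\eqd \mathcal{L}_{\RR^2}(\tilde\para,N_{\PPP(\nu_H,\nu_V)})$, after invoking Kolmogorov extension as in Section~\ref{sec:im}.

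The main obstacle, as usual for these space-time reversibility statements, is step (ii)--(iii): making rigorous the passage from the infinitesimal-generator balance to a genuine equality in distribution of random \emph{sets of lines on all of $\RR^2$}. One has to be careful that the ``collision'' rule at an intersection is not a Poissonian rate but a probability applied at a geometrically determined point, so the thin-box expansion mixes a genuine rate-$O(\varepsilon)$ part (creations, splits, turns) with an $O(\varepsilon^2)$-probability part (two independent PPP lines meeting inside an $\varepsilon\times\varepsilon$ box) --- the bookkeeping of which $O(\varepsilon^2)$ terms survive after summing over a column of $\ell/\varepsilon$ boxes is the delicate point, and is presumably where the conditions involving products $\nu_V\nu_H p_0$ in~(2) come from. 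I expect the paper handles this via the explicit Poisson--Kirchhoff / TASEP-type computation sketched in Section~\ref{sec:heu}, and the honest work is checking the algebra of the balance equations closes exactly under hypotheses (1)--(5); I would organize the write-up so that the analytic limiting argument is quarantined into one lemma and the rest is the (routine but lengthy) verification that (1)--(5) are the right balance relations.
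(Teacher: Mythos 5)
Your write-up is a programme rather than a proof: the two steps that carry all the weight are left open. The thin-slice generator computation is never actually performed (you gesture at which condition should balance which event and interrupt yourself mid-bookkeeping), and you yourself flag as ``the delicate point'' the passage from an infinitesimal balance to an equality in law of random sets of lines --- in particular the fact that collisions are not rate-$O(\varepsilon)$ events but probabilities attached to geometrically determined intersection points, hence only $O(\varepsilon^2)$ in a thin box --- without resolving it. As written, neither the invariance of $N_{\PPP(\nu_H,\nu_V)}$ nor the $s_\pi$-reverse identity is established; the five hypotheses are never verified to close any concrete system of equations.

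The paper's proof avoids the limit entirely, and this is the idea missing from your proposal. It works in a fixed finite box $[-a,a]\times[-b,b]$: Section~\ref{sec:dens} builds a reference $\sigma$-finite measure on configurations of segments and Lemma~\ref{lem:density} gives the exact \RN{} density of the restricted law under the initial condition $N_{\PPP(\nu_H,\nu_V)}$, a product of powers of the parameters indexed by the counts of the thirteen point types times exponential length and area factors. The $s_\pi$-reverse statement then reduces to a finite algebraic identity: rotation by $\pi$ permutes the point types (Table~\ref{tab:type}) and preserves total segment lengths (Lemma~\ref{lem:lenght}), conditions (1)--(5) absorb all parameter factors, and the leftover powers of $\nu_V,\nu_H$ match by the counting identity of Lemma~\ref{lem:sepoints} (each segment has exactly one beginning and one end). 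No generator, no $\varepsilon\to0$ argument. Stationarity is then \emph{deduced} from this box identity by applying it twice, to $[0,a+c]\times[0,b+d]$ and to $[a,a+c]\times[b,b+d]$, which identifies the restriction of the diagram to the small rectangle with a freshly started model --- exactly the definition in Section~\ref{sec:im}. Your steps (ii)--(iii) would instead require a Markov property across the exit boundary and a consistency argument as the box grows, none of which you supply. If you want to keep your route, the missing ingredient is precisely an exact finite-volume density (or an honest treatment of the $O(\varepsilon^2)$ collision terms in the slice limit); with Lemma~\ref{lem:density} in hand, your ``balance equations'' become the exponent bookkeeping above and the limiting argument is unnecessary.
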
  

Combining Proposition~\ref{prop:r} with Theorem~\ref{thm:pi} permits to deduce sufficient conditions to get $(s_{\pi} \circ r)$-quasi-reversibility.\par
\medskip
\begin{remark}
  In~\cite{BCES23}, Theorem~3.5 applied to PKS with intensity $0$ permits to obtain sufficient conditions for a \Bullet model to be $s_\pi$-reversible. If we restrict Theorem~\ref{thm:pi} to $s_\pi$-reversibility (that is $\para = \tilde{\para}$), we find back the same conditions.
\end{remark}
\medskip
Now let us state a corollary that involves only one \Bullet model with parameter $\para$. To get a simpler expression of it, let us introduce the three following non-negative quantities depending on $\para$:
\begin{align}
  & A = (\lambda_H+\tau_H)(\lambda_V+\tau_V) - \tau_V \tau_H, \label{eq:A}\\
  & B_V = (p_H+p_V) (\lambda_V + \tau_V) - p_V \lambda_V, \text{ and} \label{eq:BV}\\
  & B_H = (p_H+p_V) (\lambda_H + \tau_H) - p_H \lambda_H. \label{eq:BH}
\end{align}

\begin{corollary} \label{cor:pi}
  Consider a \Bullet model with parameter $\para$ such that $A,B_V,B_H \neq 0$ and
  \begin{equation}
    B_H B_V \lambda_0 = A^2 p_0. \label{eq:pi-rev}
  \end{equation}
  Then, the measure $N_{\PPP(\nu_H,\nu_V)}$ is stationary for it with
  \begin{equation}
    \nu_H = \frac{A}{B_H} \text{ and } \nu_V = \frac{A}{B_V}. \label{eq:nu}
  \end{equation}
  
  Moreover, the couple $(\para,N_{\PPP(\nu_H,\nu_V)})$ is $s_{\pi}$-quasi-reversible and its $s_\pi$-reverse is $(\tilde{\para},N_{\PPP(\nu_H,\nu_V)})$ where the parameter $\tilde{\para}$ is
  \begin{itemize}
  \item $\tilde{\lambda}_0 = \lambda_0$, $\tilde{\lambda}_V = \frac{A}{B_H} p_V$ and $\tilde{\lambda}_H = \frac{A}{B_V} p_H$,
  \item $\tilde{\tau}_V = \frac{B_V}{B_H} \tau_H$ and $\tilde{\tau}_H = \frac{B_H}{B_V} \tau_V$,
  \item $\tilde{p}_V = \frac{B_H}{A} \lambda_V$, $\tilde{p}_H = \frac{B_V}{A} \lambda_H$ and $\tilde{p}_0 = p_0$.
  \end{itemize}
\end{corollary}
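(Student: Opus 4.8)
The plan is to derive Corollary~\ref{cor:pi} as a direct consequence of Theorem~\ref{thm:pi}, by exhibiting the intensities $\nu_V,\nu_H$ and the reverse parameter $\tilde\para$ explicitly and checking that the five numbered conditions of the theorem are satisfied. First I would set $\nu_H = A/B_H$ and $\nu_V = A/B_V$; since $A$, $B_V$, $B_H$ are non-negative by \eqref{eq:A}--\eqref{eq:BH} and non-zero by hypothesis, these are well-defined positive reals, and one has the convenient relations $\nu_H/\nu_V = B_V/B_H$ and $\nu_V\nu_H = A^2/(B_VB_H)$. I would then take $\tilde\para$ to be the parameter listed in the statement and rewrite its entries in terms of $\nu_V,\nu_H$: $\tilde\lambda_V = \nu_H p_V$, $\tilde\lambda_H = \nu_V p_H$, $\tilde\tau_V = (\nu_H/\nu_V)\tau_H$, $\tilde\tau_H = (\nu_V/\nu_H)\tau_V$, $\tilde p_V = \lambda_V/\nu_H$, $\tilde p_H = \lambda_H/\nu_V$, $\tilde p_0 = p_0$, $\tilde\lambda_0 = \lambda_0$. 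In this form, conditions 2 (except its first equality), 3 and 4 of Theorem~\ref{thm:pi} become immediate substitutions, while the first equality of condition 2, namely $\lambda_0 = \nu_V\nu_H p_0$, reads $\lambda_0 = A^2 p_0/(B_VB_H)$ and is precisely the assumed relation \eqref{eq:pi-rev}.

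The only genuine computation lies in conditions 1 and 5, which amount to two polynomial identities in the coordinates of $\para$. For condition 1 I would prove $A\,p_H + B_H\,\tau_V = B_V(\lambda_H+\tau_H)$: substituting the definitions, the $\tau_V\tau_H$-terms cancel and both sides collapse to $(\lambda_H+\tau_H)\bigl(p_H(\lambda_V+\tau_V)+p_V\tau_V\bigr)$; dividing by $B_V$ yields $\tilde\lambda_H+\tilde\tau_H = \lambda_H+\tau_H$. For condition 5 I would prove $B_H\lambda_V + B_V\lambda_H = A(p_V+p_H)$, which follows by expansion: both sides equal $(p_H+p_V)\bigl(\lambda_H\lambda_V+\lambda_H\tau_V+\tau_H\lambda_V\bigr)$, and the last parenthesis is exactly $A$. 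Here the definitions of $B_V,B_H$ and the formulas for $\tilde\para$ are all invariant under simultaneously swapping the subscripts $V\leftrightarrow H$ together with $\nu_V\leftrightarrow\nu_H$, so it suffices to check one of each pair of equalities and invoke this symmetry for the other; this halves the bookkeeping.

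Finally I would verify that $\tilde\para$ is an admissible parameter: the rates $\tilde\lambda_0,\tilde\lambda_V,\tilde\lambda_H,\tilde\tau_V,\tilde\tau_H$ are manifestly non-negative (ratios and products of non-negative quantities), and $\tilde p_V,\tilde p_H,\tilde p_0\geq 0$ with $\tilde p_V+\tilde p_H+\tilde p_0 = p_V+p_H+p_0\leq 1$ by condition 5, so in particular each of $\tilde p_V,\tilde p_H,\tilde p_0$ lies in $[0,1]$. With all hypotheses of Theorem~\ref{thm:pi} verified, the theorem gives that $N_{\PPP(\nu_H,\nu_V)}$ is stationary for the model with parameter $\para$ and that $(\para,N_{\PPP(\nu_H,\nu_V)})$ and $(\tilde\para,N_{\PPP(\nu_H,\nu_V)})$ are $s_\pi$-reverse, which is exactly the assertion of the corollary (the values of $\nu_H,\nu_V$ in \eqref{eq:nu} being our defining choice). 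The main obstacle is purely organizational — keeping the expansions of $A$, $B_V$, $B_H$ straight — and there is no conceptual difficulty once Theorem~\ref{thm:pi} is available.
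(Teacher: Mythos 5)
Your proposal is correct and follows essentially the same route as the paper: choose $\nu_H = A/B_H$, $\nu_V = A/B_V$ and the stated $\tilde{\para}$, then verify the five conditions of Theorem~\ref{thm:pi}, with the only real computations being the identities $A p_H + B_H \tau_V = B_V(\lambda_H+\tau_H)$ and $B_H\lambda_V + B_V\lambda_H = A(p_V+p_H)$, exactly as in the paper's proof. Your extra remarks (the $V\leftrightarrow H$ symmetry halving the checks, and the admissibility of $\tilde{\para}$) are harmless refinements of the same argument.
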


\begin{remark}
  The good point of the corollary is that we can compute from the parameter $\para$ both one of its stationary measures and the parameter $\tilde{\para}$ of one of its $s_\pi$-reverse, whereas we need to guess them to apply Theorem~\ref{thm:pi}. The bad point is that the conditions $A,B_V,B_H \neq 0$ are restrictive.
\end{remark}


When $\lambda_0 = p_0 = 0$, the Equation~\eqref{eq:pi-rev} always holds. Hence, 
\begin{corollary} \label{cor:pi0}
  Consider a \Bullet model with parameter $\para$ such that $\lambda_0 = p_0 = 0$ and $A,B_V,B_H \neq 0$. Then, the measure $N_{\PPP(\nu_H,\nu_V)}$ is stationary for $\para$ with $\nu_H$ and $\nu_V$ as defined in Equation~\eqref{eq:nu}. Moreover, because $\lambda_0 = 0$, the empty measure $N_{\PPP(0,0)}$ is stationary for $\para$.
\end{corollary}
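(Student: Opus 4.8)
The plan is to obtain both assertions essentially for free. For the first one, I would begin by observing that under the standing hypothesis $\lambda_0 = p_0 = 0$, the compatibility equation~\eqref{eq:pi-rev}, namely $B_H B_V \lambda_0 = A^2 p_0$, reads $0 = 0$ and is therefore automatically satisfied. Since we also assume $A, B_V, B_H \neq 0$, every hypothesis of Corollary~\ref{cor:pi} is met, and its conclusion applies verbatim: the measure $N_{\PPP(\nu_H,\nu_V)}$ with $\nu_H = A/B_H$ and $\nu_V = A/B_V$ as in~\eqref{eq:nu} is stationary for $\para$. I would add one sentence remarking that $A$, $B_V$, $B_H$ are non-negative by their defining formulas~\eqref{eq:A}--\eqref{eq:BH} and nonzero by hypothesis, hence positive, so $\nu_H, \nu_V > 0$ are bona fide PPP intensities and the statement is not vacuous.

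For the second assertion, I would argue directly from the construction of the \Bullet model in Section~\ref{sec:bm} rather than through Corollary~\ref{cor:pi}. When $\lambda_0 = 0$, the PPP $\Xi_0$ of ex-nihilo creation points is empty almost surely, so rule $1_0$ never fires; when moreover the initial condition $(\mathcal{C}_X,\mathcal{C}_Y)$ is empty — which is precisely the content of $N = N_{\PPP(0,0)}$, the counting measure assigning full mass to $\{N([a,b]) = 0 \text{ for all } a \le b\}$ — rules $1_V$ and $1_H$ create no lines either. Consequently no line is ever present, the rules $2_V(a)$--$3(d)$, which act only on already existing lines, are never triggered, and $\mathcal{L}_{[0,\infty)^2}(\para,N_{\PPP(0,0)}) = \emptyset$ almost surely. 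The identical argument on any translated quarter-plane gives $\mathcal{L}_{[a,\infty)\times[b,\infty)}(\para,N_{\PPP(0,0)}) = \emptyset$ a.s., so the stationarity identity holds trivially, both sides being the empty set; hence $N_{\PPP(0,0)}$ is stationary.

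I do not anticipate a genuine obstacle: the first part is a bookkeeping specialization of Corollary~\ref{cor:pi}, and the second is the elementary ``nothing in, nothing out'' principle for the dynamics. The only point deserving a line of care is to spell out that the empty initial condition is exactly the one coded by $N_{\PPP(0,0)}$ and that emptiness is genuinely propagated by the quarter-plane construction — but this is immediate from inspecting rules $1_V$--$3(d)$, since each of them either requires a point of $\mathcal{C}_X$, $\mathcal{C}_Y$ or $\Xi_0$ (all empty here) or requires a pre-existing line.
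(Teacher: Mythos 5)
Your proposal is correct and follows the paper's intended argument exactly: the paper presents Corollary~\ref{cor:pi0} as an immediate consequence of Corollary~\ref{cor:pi}, since $\lambda_0 = p_0 = 0$ makes Equation~\eqref{eq:pi-rev} hold trivially, together with the observation that with no ex-nihilo creations ($\lambda_0=0$) the empty configuration is preserved, so $N_{\PPP(0,0)}$ is stationary. Your added remarks (positivity of $A,B_V,B_H$, and that $N_{\PPP(0,0)}$ codes the empty initial condition which the rules $1_V$--$3(d)$ propagate) are sound elaborations of the same reasoning.
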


\subsection{$s_{\pi/2}$-quasi-reversibility} \label{sec:pi2}
As in the previous section, we state first a theorem that gives sufficient conditions for two \Bullet models, one of a parameter $\para$ and the other one with parameter $\tilde{\para}$, such that the one with parameter $\para$ is $s_{\pi/2}$-quasi-reversible and its $s_{\pi/2}$-reverse is the \Bullet model with parameter $\tilde{\para}$. Then, a corollary is given that establishes sufficient conditions only on $\para$ and that permits to compute $\tilde{\para}$ from $\para$.\par

\begin{theorem} \label{thm:pi2}
  Consider two \Bullet models, one with parameter $\para$ and the other one with parameter $\tilde{\para}$. Suppose that there exists $\nu_V \in \RR_{\geq 0}$ such that the five following conditions are satisfied:
  \begin{enumerate}
  \item $\tilde{\lambda}_H + \tilde{\tau}_H = \lambda_V + \tau_V$ and $\tilde{\lambda}_V + \tilde{\tau}_V = \lambda_H + \tau_H$,
  \item $\tilde{\lambda}_0 = \lambda_0 = \nu_V \tau_V$, $\tilde{\lambda}_V = \nu_V p_H$ and $\tilde{\lambda}_H = \lambda_V$,
  \item $\tilde{\tau}_V = \nu_V p_0$ and $\nu_V \tilde{\tau}_H = \lambda_0$,
  \item $\nu_V \tilde{p}_V = \lambda_H$, $\tilde{p}_H = p_V$ and $\nu_V \tilde{p}_0 = \tau_H$,
  \item $\tilde{p}_V + \tilde{p}_H + \tilde{p}_0 = p_V + p_H + p_0$.
  \end{enumerate}
  Then, for any $\nu_H \in \RR_{\geq 0}$, for any $a,b \in \RR_{\geq 0}$,
  \begin{displaymath}
    s_{\pi/2} \left( \mathcal{L}_{[0,\infty)^2}\left(\para,N_{\PPP(\nu_H,\nu_V)}\right)|_{[0,a] \times [0,b]} \right) \eqd \mathcal{L}_{[0,\infty)^2}\left(\tilde{\para},N_{\PPP(\nu_V,\nu_H)}\right)|_{[0,b] \times [0,a]}. 
  \end{displaymath}
  
  Moreover, if there exists $\nu_H \in \RR_{\geq 0}$ such that the measure $N_{\PPP(\nu_H,\nu_V)}$ is stationary for the \Bullet model with parameter $\para$, then the measure $N_{\PPP(\nu_V,\nu_H)}$ is stationary for the \Bullet model with parameter $\tilde{\para}$; and the \Bullet model with parameter $\para$ under the stationary measure $N_{\PPP(\nu_H,\nu_V)}$ is $s_{\pi/2}$-quasi-reversible and its $s_{\pi/2}$-reverse is $(\tilde{\para},N_{\PPP(\nu_V,\nu_H)})$.
\end{theorem}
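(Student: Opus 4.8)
\emph{Proof plan.} The plan is to separate the assertion into the distributional identity on the fixed rectangle that is displayed and the \emph{moreover} part on stationarity and $s_{\pi/2}$-quasi-reversibility, the latter following from the former by soft arguments. For the rectangle identity, fix $a,b\geq 0$ and describe the law of $\mathcal{L}_{[0,\infty)^2}(\para,N_{\PPP(\nu_H,\nu_V)})|_{[0,a]\times[0,b]}$ through its \RN{} density with respect to a natural reference measure on the space of finite configurations of horizontal and vertical segments in the box $[0,a]\times[0,b]$, using the configuration space set up in Section~\ref{sec:dens}. Because a \Bullet model is driven by independent Poisson sources on the two axes, Poisson clocks for the splits and turns carried by the lines, the Poisson process $\Xi_0$ of ex-nihilo points, and independent coins at the intersections, a change-of-measure computation for marked Poisson processes shows this density to be, up to the reference normalisation, a product of local factors: a factor $\nu_V$ (resp.\ $\nu_H$) for each source on the bottom (resp.\ left) edge together with $e^{-\nu_V a}$, $e^{-\nu_H b}$; a factor $\lambda_0$ per ex-nihilo point with $e^{-\lambda_0 ab}$; a factor $e^{-(\lambda_V+\tau_V)\times(\text{total vertical length})}$, resp.\ $e^{-(\lambda_H+\tau_H)\times(\text{total horizontal length})}$, with an extra $\lambda_V$ or $\tau_V$, resp.\ $\lambda_H$ or $\tau_H$, per split or turn; and a factor $p_V$, $p_H$, $p_0$, or $1-(p_V+p_H+p_0)$ at each intersection, depending on the outcome there. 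I would write the analogous formula for $\mathcal{L}_{[0,\infty)^2}(\tilde\para,N_{\PPP(\nu_V,\nu_H)})|_{[0,b]\times[0,a]}$.

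Then I would make explicit the bijection $\Phi$ from configurations in $[0,a]\times[0,b]$ to configurations in $[0,b]\times[0,a]$ induced by $s_{\pi/2}$ (followed by the translation bringing the image rectangle back into $[0,\infty)^2$), check that $\Phi$ pushes the reference measure of the first box onto that of the second, and verify the core algebraic identity: the density at $\omega$ of the left-hand side equals the density at $\Phi(\omega)$ of the right-hand side, for every configuration $\omega$ in the box. This is checked one family of local factors at a time and is exactly where conditions 1--5 are consumed. Condition 1 matches the survival exponentials, a vertical segment of the $\para$-picture becoming a horizontal segment of the $\tilde\para$-picture that is swept in the opposite sense; conditions 2--4 match the point factors, their asymmetric form reflecting that $s_{\pi/2}$ swaps a spatial with a time-like direction and hence time-reverses the dynamics along one of the two line families, so that splits, turns, ex-nihilo creations and the three ``effective'' collision outcomes of $\para$ are converted into one another in $\tilde\para$ (which is also why $\nu_V$, $\nu_H$ appear multiplicatively in several of the relations); condition 5 records that an intersection at which both lines cross is preserved by $s_{\pi/2}$, so the weight $1-(p_V+p_H+p_0)$ of that outcome is preserved. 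A consistency check worth making explicit is that $\nu_H$ enters the left-hand density only through the source factor on the left edge, which $\Phi$ carries bijectively onto the source factor on the bottom edge of the right-hand density, of the same intensity $\nu_H$; so the identity, and hence the displayed equality in distribution, holds for \emph{every} $\nu_H$, as claimed.

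For the \emph{moreover} part, suppose in addition that $\nu_H$ is such that $N_{\PPP(\nu_H,\nu_V)}$ is stationary for $\para$. I would first upgrade the rectangle identity from corner-anchored boxes $[0,a]\times[0,b]$ to arbitrary boxes $[a_1,a_2]\times[b_1,b_2]$: stationarity of $\para$ lets one translate such a box to a corner-anchored one, apply the identity, and translate back using the translation-equivariance of $s_{\pi/2}$. Together with the definition of stationarity recalled in Section~\ref{sec:im}, this gives that $N_{\PPP(\nu_V,\nu_H)}$ is stationary for $\tilde\para$. Kolmogorov's extension theorem then produces $\mathcal{L}_{\RR^2}(\tilde\para,N_{\PPP(\nu_V,\nu_H)})$, and the consistent family of rectangle identities identifies its law with the $s_{\pi/2}$-pushforward of $\rho(\para,N_{\PPP(\nu_H,\nu_V)})$, i.e.\ $\mathcal{L}_{\RR^2}(\tilde\para,N_{\PPP(\nu_V,\nu_H)})\eqd s_{\pi/2}\bigl(\mathcal{L}_{\RR^2}(\para,N_{\PPP(\nu_H,\nu_V)})\bigr)$; this is precisely $s_{\pi/2}$-quasi-reversibility with the stated $s_{\pi/2}$-reverse.

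The main obstacle is the density computation together with the term-by-term matching under $\Phi$, and within it the correct accounting of the box's four edges. Indeed, under $s_{\pi/2}$ an edge through which lines exit the $\para$-box is mapped onto a source edge of the $\tilde\para$-box, so a Burke-type statement --- that the lines leaving the box through that edge form a Poisson process of the expected intensity --- is built into the identity and is really what must be established; relatedly, one must choose the reference measure on box configurations carefully enough that it is genuinely symmetric under $\Phi$ and that the above product structure, including the boundary contributions, is exact. Keeping the causal bookkeeping straight across the spatial/temporal exchange while verifying that every type of local factor transforms exactly as conditions 1--5 prescribe is the technical heart; once this is done, the remainder is formal.
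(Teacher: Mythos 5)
Your proposal is correct and follows essentially the same route as the paper's proof: an explicit \RN{} density for the box law with respect to the reference measure on segment configurations (the paper's Lemma~\ref{lem:density}), a type-by-type and length-preserving matching of configurations under the $s_{\pi/2}$ bijection in which conditions 1--5 are consumed, and then the same soft argument (translate using stationarity of $\para$, reapply the rectangle identity, extend by Kolmogorov) for the stationarity of $N_{\PPP(\nu_V,\nu_H)}$ and the quasi-reversibility statement. The only ingredient you leave implicit is that the factor-by-factor matching of point weights produces residual powers of $\nu_V$ (since, e.g., $\tilde{\tau}_V=\nu_V p_0$ and $\lambda_0=\nu_V\tau_V$), which cancel only after summing exponents via the observation that every vertical segment has exactly one beginning and one end (the paper's Lemma~\ref{lem:sepoints}); this is a routine final bookkeeping step rather than a gap.
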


Combining Theorem~\ref{thm:pi2} with Proposition~\ref{prop:r} permits to obtain sufficient conditions for quasi-reversibility of the last three elements of $D_4$: $r \circ s_{\pi/2}$, $s_{\pi/2} \circ r = r \circ s_{3\pi/2}$ and $r \circ s_{\pi/2} \circ r = s_{3\pi/2}$.

Now, let us state its corollary that involves only one \Bullet model with parameter $\para$.
\begin{corollary} \label{cor:pi2}
  Consider a \Bullet model with parameter $\para$ such that $A,B_V,B_H \neq 0$ (as defined in Equations~\eqref{eq:A},~\eqref{eq:BV} and~\eqref{eq:BH}) and
  \begin{equation} \label{eq:pi2-rev}
    B_V \lambda_0 = A \tau_V \text{ and } A p_0 = B_H \tau_V.
  \end{equation}
  Then, by Corollary~\ref{cor:pi}, the measure $N_{\PPP(\nu_H,\nu_V)}$ is stationary for it with $(\nu_H,\nu_V)$ as defined in Equation~\eqref{eq:nu} and the \Bullet model with parameter $\para$, under this law, is $s_\pi$-quasi-reversible.\par
  Moreover, the couple $(\para,N_{\PPP(\nu_H,\nu_V)})$ is $s_{\pi/2}$-quasi-reversible and its $s_{\pi/2}$-reverse is $(\tilde{\para},N_{\PPP(\nu_V,\nu_H)})$ where the parameter $\tilde{\para}$ is
  \begin{itemize}
  \item $\tilde{\lambda}_0 = \frac{A}{B_V} \tau_V (= \lambda_0)$, $\tilde{\lambda}_V = \frac{A}{B_V} p_H$ and $\tilde{\lambda}_H = \lambda_V$,
  \item $\tilde{\tau}_V = \frac{B_H}{B_V} \tau_V (= \frac{A}{B_V} p_0)$ and $\tilde{\tau}_H = \tau_V (= \frac{B_V}{A} \lambda_0)$,
  \item $\tilde{p}_V = \frac{B_V}{A} \lambda_H$, $\tilde{p}_H = p_V$ and $\tilde{p}_0 = \frac{B_V}{A} \tau_H$.
  \end{itemize}
\end{corollary}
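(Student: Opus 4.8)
The plan is to deduce Corollary~\ref{cor:pi2} from Corollary~\ref{cor:pi} and Theorem~\ref{thm:pi2}; the only real work is a substitution check, and none of it requires re-entering the space-time diagram machinery.

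First I would dispose of the $s_\pi$-part. From the two identities in~\eqref{eq:pi2-rev}, and since $A\neq 0$, one gets $\tau_V = B_V\lambda_0/A = Ap_0/B_H$, hence $B_HB_V\lambda_0 = A^2p_0$, which is exactly~\eqref{eq:pi-rev}. So Corollary~\ref{cor:pi} applies verbatim: the measure $N_{\PPP(\nu_H,\nu_V)}$ with $\nu_H = A/B_H$ and $\nu_V = A/B_V$ is stationary for $\para$, and $(\para,N_{\PPP(\nu_H,\nu_V)})$ is $s_\pi$-quasi-reversible. This also fixes the values of $\nu_H,\nu_V$ used below, and I note $\nu_V,\nu_H>0$ because $A,B_V,B_H$ are non-negative and, by hypothesis, non-zero.

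Next I would apply Theorem~\ref{thm:pi2} with this $\nu_V = A/B_V$ and the candidate parameter $\tilde{\para}$ written in the statement. The five conditions split into two kinds. Conditions 2, 3 and 4 are plain substitutions: each entry of $\tilde{\para}$ was defined as the appropriate multiple of an entry of $\para$, so e.g. $\tilde{\lambda}_V = \tfrac{A}{B_V}p_H = \nu_V p_H$ and $\nu_V\tilde{p}_V = \tfrac{A}{B_V}\cdot\tfrac{B_V}{A}\lambda_H = \lambda_H$ are immediate, while the equality $\lambda_0 = \nu_V\tau_V$ needed in condition 2 and the equalities $\tilde{\tau}_V = \nu_V p_0$, $\nu_V\tilde{\tau}_H = \lambda_0$ of condition 3 are exactly the two hypotheses~\eqref{eq:pi2-rev} rewritten. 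Conditions 1 and 5 both come down to the single polynomial identity
\begin{equation*}
  B_V(\lambda_H+\tau_H) = Ap_H + B_H\tau_V,
\end{equation*}
valid for all values of the eight parameters (expand $A,B_V,B_H$ via~\eqref{eq:A},~\eqref{eq:BV},~\eqref{eq:BH} and match monomials; it is the image of an analogous identity under the $V\leftrightarrow H$ swap of Proposition~\ref{prop:r}). Indeed, condition 1 becomes $\tilde{\lambda}_V + \tilde{\tau}_V = \tfrac{1}{B_V}(Ap_H + B_H\tau_V) = \lambda_H+\tau_H$ (its other half $\tilde{\lambda}_H + \tilde{\tau}_H = \lambda_V+\tau_V$ is trivial since $\tilde{\lambda}_H = \lambda_V$ and $\tilde{\tau}_H = \tau_V$), and condition 5 becomes $\tilde{p}_V + \tilde{p}_H + \tilde{p}_0 = \tfrac{B_V}{A}(\lambda_H+\tau_H) + p_V = p_H + \tfrac{B_H\tau_V}{A} + p_V = p_H + p_0 + p_V$, the last step using $Ap_0 = B_H\tau_V$ from~\eqref{eq:pi2-rev}. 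Along the way I would record that $\tilde{\para}$ is an admissible parameter: its entries are products and quotients of the non-negative quantities $A,B_V,B_H$ and of the entries of $\para$, hence non-negative, and $\tilde{p}_V + \tilde{p}_H + \tilde{p}_0 = p_V+p_H+p_0 \leq 1$.

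Finally, having verified the five conditions, the first conclusion of Theorem~\ref{thm:pi2} gives the equality in distribution of the restricted diagrams; and since the first step showed $N_{\PPP(\nu_H,\nu_V)}$ is stationary for $\para$ with exactly $\nu_H = A/B_H$, the ``moreover'' part of Theorem~\ref{thm:pi2} applies, yielding that $N_{\PPP(\nu_V,\nu_H)}$ is stationary for $\tilde{\para}$ and that $(\para,N_{\PPP(\nu_H,\nu_V)})$ is $s_{\pi/2}$-quasi-reversible with $s_{\pi/2}$-reverse $(\tilde{\para},N_{\PPP(\nu_V,\nu_H)})$, as claimed. The only non-mechanical point is the polynomial identity displayed above; the only thing to watch is that the $\nu_V$ fed to Theorem~\ref{thm:pi2} is forced to be the $\nu_V = A/B_V$ produced by Corollary~\ref{cor:pi}, which is precisely what the hypothesis $B_V\lambda_0 = A\tau_V$ guarantees. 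Everything else is bookkeeping.
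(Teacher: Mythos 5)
Your proposal is correct and follows essentially the same route as the paper: derive \eqref{eq:pi-rev} from \eqref{eq:pi2-rev} so that Corollary~\ref{cor:pi} supplies the stationary measure with $\nu_H = A/B_H$, $\nu_V = A/B_V$, then verify the five conditions of Theorem~\ref{thm:pi2} for the stated $\tilde{\para}$ by substitution, with conditions~1 and~5 reducing to the identity $B_V(\lambda_H+\tau_H) = A p_H + B_H \tau_V$ — which is exactly the computation in the paper's proof. The only cosmetic difference is that you isolate this polynomial identity once and reuse it for both conditions, whereas the paper repeats the expansion; both are valid.
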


\begin{remark}
  Note that Equation~\eqref{eq:pi2-rev} implies Equation~\eqref{eq:pi-rev}.
\end{remark}

\subsection{Applications to some examples} \label{sec:appli}
\paragraph{Application to the Colliding \BULLET Model with Creations:}
Corollary~\ref{cor:pi} applies to the \Bullet model with parameter $\para_V$. Hence, $N_{\PPP(1,1)}$ is one of its stationary measures, and under this one, it is $s_\pi$-quasi-reversible and its $s_{\pi}$-reverse is $(\para_H,N_{\PPP(1,1)})$. Combined with Proposition~\ref{prop:r}, we can even deduce that the \Bullet model with parameter $\para_V$ under its stationary measure $N_{\PPP(1,1)}$ is $(s_{\pi} \circ r)$-reversible.\par
On the contrary, Corollary~\ref{cor:pi2} does not apply to the \Bullet model with parameter $\para_V$, but applies to the \Bullet model with parameter $\para_H$. Its application induces that
\begin{proposition} \label{prop:CBMCqr}
  The $s_{\pi/2}$-reverse of $(\para_H,N_{\PPP(1,1)})$ is $(\para_{\TABM},N_{\PPP(1,1)})$.
\end{proposition}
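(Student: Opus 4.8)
The plan is to obtain Proposition~\ref{prop:CBMCqr} as a direct instance of Corollary~\ref{cor:pi2}, applied to the \Bullet model with parameter $\para_H = (0,1,0,0,1,0,1,0)$, i.e.\ with $\lambda_0 = \lambda_H = \tau_V = p_V = p_0 = 0$ and $\lambda_V = \tau_H = p_H = 1$. So the first step is simply to verify that $\para_H$ meets the hypotheses of Corollary~\ref{cor:pi2}.

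To that end I would compute the three auxiliary quantities of Equations~\eqref{eq:A}--\eqref{eq:BH} for $\para_H$. Since $\tau_V = 0$ one gets $A = (\lambda_H+\tau_H)(\lambda_V+\tau_V) = 1$; since $p_V = 0$ and $\lambda_H = 0$ one gets $B_V = (p_H+p_V)(\lambda_V+\tau_V) = 1$ and $B_H = (p_H+p_V)(\lambda_H+\tau_H) = 1$. In particular $A, B_V, B_H \neq 0$, so the standing non-degeneracy assumption of Corollary~\ref{cor:pi2} holds. Next I would check Equation~\eqref{eq:pi2-rev}: because $\lambda_0 = \tau_V = p_0 = 0$ for $\para_H$, both required identities $B_V \lambda_0 = A \tau_V$ and $A p_0 = B_H \tau_V$ collapse to $0 = 0$, hence are satisfied.

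Granting the hypotheses, Corollary~\ref{cor:pi2} then gives, via Corollary~\ref{cor:pi} and Equation~\eqref{eq:nu}, that $N_{\PPP(\nu_H,\nu_V)}$ with $\nu_H = A/B_H = 1$ and $\nu_V = A/B_V = 1$ — that is, $N_{\PPP(1,1)}$ — is stationary for $\para_H$, that $(\para_H, N_{\PPP(1,1)})$ is $s_{\pi/2}$-quasi-reversible, and that its $s_{\pi/2}$-reverse is $(\tilde\para, N_{\PPP(\nu_V,\nu_H)}) = (\tilde\para, N_{\PPP(1,1)})$ with $\tilde\para$ read off the explicit list in the corollary. Plugging the entries of $\para_H$ into that list yields $\tilde\lambda_0 = \frac{A}{B_V}\tau_V = 0$, $\tilde\lambda_V = \frac{A}{B_V} p_H = 1$, $\tilde\lambda_H = \lambda_V = 1$, $\tilde\tau_V = \frac{B_H}{B_V}\tau_V = 0$, $\tilde\tau_H = \tau_V = 0$, $\tilde p_V = \frac{B_V}{A}\lambda_H = 0$, $\tilde p_H = p_V = 0$ and $\tilde p_0 = \frac{B_V}{A}\tau_H = 1$, that is $\tilde\para = (0,1,1,0,0,0,0,1) = \para_{\TABM}$, which is exactly the asserted identification of the $s_{\pi/2}$-reverse.

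I do not expect a genuine obstacle here: the argument is pure bookkeeping once Corollary~\ref{cor:pi2} is in hand. The only points that need care are keeping the coordinate order of the $8$-tuple $\para = (\lambda_0,\lambda_V,\lambda_H,\tau_V,\tau_H,p_V,p_H,p_0)$ straight when reading off $\para_H$ (which is $\para_V$ with the vertical and horizontal roles exchanged) and when reassembling $\tilde\para$, and noticing that the apparently restrictive conditions in Equation~\eqref{eq:pi2-rev} reduce here to trivial identities rather than imposing an extra constraint. As an optional consistency check, one can instead verify directly the five conditions of Theorem~\ref{thm:pi2} with $\nu_V = \nu_H = 1$ and confirm that they reproduce the same $\tilde\para = \para_{\TABM}$.
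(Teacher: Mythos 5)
Your proof is correct and follows exactly the paper's route: Proposition~\ref{prop:CBMCqr} is obtained in the paper by applying Corollary~\ref{cor:pi2} to $\para_H=(0,1,0,0,1,0,1,0)$, which is precisely what you do, and your computations of $A=B_V=B_H=1$, of $\nu_H=\nu_V=1$, and of $\tilde{\para}=(0,1,1,0,0,0,0,1)=\para_{\TABM}$ are accurate. The only difference is that you spell out the bookkeeping the paper leaves implicit.
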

In particular, this proves Proposition~\ref{prop:TABM}.\par

\paragraph{Application to the \Bullet model with parameter $\para = (0,1,1,0,0,1/2,1/2,0)$:}
Corollary~\ref{cor:pi2} applies to the \Bullet model with parameter $\para = (0,1,1,0,0,1/2,1/2,0)$. Hence, $N_{\PPP(2,2)}$ is one of its stationary measures, and, under it, it is $s_{\pi/2}$-reversible. As, it is also $r$-reversible, we obtain that
\begin{proposition} \label{prop:loop}
  One of the stationary measures of the \Bullet model with parameter $\para = (0,1,1,0,0,1/2,1/2,0)$ is the measure $N_{\PPP(2,2)}$. Moreover, for any symmetry $g \in D_4$, the couple $(\para,N_{\PPP(2,2)})$ is $g$-reversible.
\end{proposition}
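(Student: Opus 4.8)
The plan is to read off the parameters, apply Corollary~\ref{cor:pi2}, and then upgrade two reversibilities to all eight elements of $D_4$ via the group structure. For $\para = (0,1,1,0,0,1/2,1/2,0)$ one has $\lambda_0 = \tau_V = \tau_H = p_0 = 0$ and $\lambda_V = \lambda_H = 1$, $p_V = p_H = 1/2$. Plugging into Equations~\eqref{eq:A}--\eqref{eq:BH} gives $A = (1+0)(1+0) - 0\cdot 0 = 1$ and $B_V = B_H = (1/2+1/2)\cdot 1 - (1/2)\cdot 1 = 1/2$, so $A,B_V,B_H \neq 0$. Since $\tau_V = \lambda_0 = p_0 = 0$, both equalities in~\eqref{eq:pi2-rev} collapse to $0 = 0$, so Corollary~\ref{cor:pi2} applies: it yields $\nu_H = A/B_H = 2$ and $\nu_V = A/B_V = 2$, whence $N_{\PPP(2,2)}$ is stationary for $\para$, and it gives the $s_{\pi/2}$-reverse $(\tilde{\para}, N_{\PPP(\nu_V,\nu_H)})$.

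Next I would compute $\tilde{\para}$ from the formulas of Corollary~\ref{cor:pi2}: $\tilde{\lambda}_0 = (A/B_V)\tau_V = 0$, $\tilde{\lambda}_V = (A/B_V)p_H = 2\cdot\tfrac12 = 1$, $\tilde{\lambda}_H = \lambda_V = 1$, $\tilde{\tau}_V = (B_H/B_V)\tau_V = 0$, $\tilde{\tau}_H = \tau_V = 0$, $\tilde{p}_V = (B_V/A)\lambda_H = \tfrac12$, $\tilde{p}_H = p_V = \tfrac12$, $\tilde{p}_0 = (B_V/A)\tau_H = 0$. Thus $\tilde{\para} = \para$, and since $\nu_V = \nu_H = 2$ also $N_{\PPP(\nu_V,\nu_H)} = N_{\PPP(2,2)}$; hence the $s_{\pi/2}$-reverse of $(\para, N_{\PPP(2,2)})$ is $(\para, N_{\PPP(2,2)})$ itself, i.e.\ the couple is $s_{\pi/2}$-reversible. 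Likewise Proposition~\ref{prop:r} says the couple is $r$-quasi-reversible with $r$-reverse $(\tilde{\para},\tilde{N})$; because $\lambda_V = \lambda_H$, $\tau_V = \tau_H$, $p_V = p_H$, $\lambda_0 = \lambda_0$ one gets $\tilde{\para} = \para$, and because $N_{\PPP(2,2)}$ is the homogeneous Poisson point process of intensity $2$ on $\RR$, which is invariant under $x \mapsto -x$, one gets $\tilde{N}([a,b]) = N_{\PPP(2,2)}([-b,-a]) = N_{\PPP(2,2)}([a,b])$ in law, i.e.\ $\tilde{N} = N_{\PPP(2,2)}$. So the couple is also $r$-reversible.

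Finally I would conclude for all $g \in D_4$. The set $G = \{ g \in D_4 : (\para, N_{\PPP(2,2)}) \text{ is } g\text{-reversible} \}$ is a subgroup of $D_4$: it contains the identity; each $g \in D_4$ is a fixed automorphism of $\RR^2$, hence induces a measurable bijection on the space of planar line configurations, so from $g(\mathcal{L}_{\RR^2}(\para,N)) \eqd \mathcal{L}_{\RR^2}(\para,N)$ one gets $g^{-1}(\mathcal{L}_{\RR^2}(\para,N)) \eqd \mathcal{L}_{\RR^2}(\para,N)$ by applying $g^{-1}$ to both sides, so $g^{-1}\in G$; and for $g,h\in G$, $h(\mathcal{L}_{\RR^2}(\para,N)) \eqd \mathcal{L}_{\RR^2}(\para,N)$ implies $(gh)(\mathcal{L}_{\RR^2}(\para,N)) = g(h(\mathcal{L}_{\RR^2}(\para,N))) \eqd g(\mathcal{L}_{\RR^2}(\para,N)) \eqd \mathcal{L}_{\RR^2}(\para,N)$, so $gh \in G$. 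By the previous paragraph $r \in G$ and $s_{\pi/2}\in G$, and $\langle r, s_{\pi/2}\rangle = D_4$ since the order-$2$ reflection $r$ together with the order-$4$ rotation $s_{\pi/2}$ already produce the four rotations and, via $r s_{\pi/2} r = s_{3\pi/2}$, the remaining three reflections. Hence $G = D_4$, which is the assertion. The only non-computational point — and the nearest thing to an obstacle — is this closure-under-composition argument, which is immediate once one records that the $D_4$-action on configuration space is deterministic and measurable, and therefore preserves equality in distribution; everything else is arithmetic inside Corollary~\ref{cor:pi2} and Proposition~\ref{prop:r}.
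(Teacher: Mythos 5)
Your proposal is correct and takes essentially the same route as the paper, which proves the proposition simply by noting that Corollary~\ref{cor:pi2} applies (giving stationarity of $N_{\PPP(2,2)}$ and $s_{\pi/2}$-reversibility) and that the model is $r$-reversible, then concluding for all of $D_4$. Your explicit evaluation of $A,B_V,B_H$, the check that $\tilde{\para}=\para$ and $\nu_V=\nu_H=2$, and the subgroup-generation argument $\langle r,s_{\pi/2}\rangle=D_4$ merely spell out details the paper leaves implicit.
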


In~\cite{BCGKP15}, they proved that the stationary measure is two invariant PPP with intensities $1$. The difference of intensities is explained by two factors $\sqrt{2}$: one comes from the rotation of the model, the other one comes from the fact that they studied the stationary measure on the anti-diagonal whereas, in this article, the stationary measure is considered on $x$-and $y$-axes.

\paragraph{Application to the \Bullet model with parameter $\para_{\text{loop}}$:}
Neither Corollary~\ref{cor:pi}, neither Corollary~\ref{cor:pi2} applies to the \Bullet model with parameter $\para_{\text{loop}}$ because $A = B_V= B_H =0$. Nevertheless, taking $\tilde{\para} = \para_{\text{loop}}$ and $\nu_V = \nu_H =1$, both Theorem~\ref{thm:pi} and Theorem~\ref{thm:pi2} hold. Hence, one of the stationary measures of the \Bullet model with parameter $\para_{\text{loop}}$ is the measure $N_{\PPP(1,1)}$. Moreover, under it, the \Bullet model is $s_\pi$-reversible and $s_{\pi/2}$-reversible. In fact, because it is also $r$-reversible, we get that
\begin{proposition} \label{prop:loop}
  One of the stationary measures of the \Bullet model with parameter $\para_{\text{loop}}$ is the measure $N_{\PPP(1,1)}$. Moreover, for any symmetry $g \in D_4$, the couple $(\para_{\text{loop}},N_{\PPP(1,1)})$ is $g$-reversible.
\end{proposition}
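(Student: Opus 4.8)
The plan is to verify that the hypotheses of Theorem~\ref{thm:pi}, Theorem~\ref{thm:pi2} and Proposition~\ref{prop:r} all apply to the specific parameter $\para_{\text{loop}} = (1,0,0,1,1,0,0,1)$ with $\tilde\para = \para_{\text{loop}}$ and $\nu_V = \nu_H = 1$, and then to combine the three quasi-reversibility statements into the full $D_4$-reversibility claim. Concretely: $\lambda_0 = 1$, $\lambda_V = \lambda_H = 0$, $\tau_V = \tau_H = 1$, $p_V = p_H = 0$, $p_0 = 1$. Note that because $A = (\lambda_H+\tau_H)(\lambda_V+\tau_V) - \tau_V\tau_H = 1\cdot 1 - 1\cdot 1 = 0$ and $B_V = B_H = (p_H+p_V)(\lambda_V+\tau_V) - p_V\lambda_V = 0$, the corollaries are unavailable, which is exactly why one must return to the theorems directly.

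First I would check the five conditions of Theorem~\ref{thm:pi} with $\para = \tilde\para = \para_{\text{loop}}$ and $\nu_V = \nu_H = 1$. Condition~(1): $\tilde\lambda_H + \tilde\tau_H = 0+1 = 1 = \lambda_H + \tau_H$, and likewise for the $V$-components. Condition~(2): $\tilde\lambda_0 = \lambda_0 = 1 = \nu_V\nu_H p_0 = 1\cdot 1\cdot 1$; $\tilde\lambda_V = 0 = \nu_H p_V = 1\cdot 0$; $\tilde\lambda_H = 0 = \nu_V p_H$. Condition~(3): $\nu_V\tilde\tau_V = 1 = \nu_H\tau_H = 1$, and symmetrically. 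Condition~(4): $\nu_H\tilde p_V = 0 = \lambda_V$; $\nu_V\tilde p_H = 0 = \lambda_H$; $\tilde p_0 = p_0 = 1$. Condition~(5): $\tilde p_V + \tilde p_H + \tilde p_0 = 1 = p_V + p_H + p_0$. So Theorem~\ref{thm:pi} gives that $N_{\PPP(1,1)}$ is stationary for $\para_{\text{loop}}$ and that $(\para_{\text{loop}},N_{\PPP(1,1)})$ is $s_\pi$-reverse to itself, i.e.\ $s_\pi$-reversible. Then I would do the analogous bookkeeping for Theorem~\ref{thm:pi2} with $\nu_V = 1$ (and $\nu_H = 1$): condition~(1) reads $\tilde\lambda_H+\tilde\tau_H = 1 = \lambda_V+\tau_V$ and $\tilde\lambda_V+\tilde\tau_V = 1 = \lambda_H+\tau_H$; condition~(2): $\tilde\lambda_0 = \lambda_0 = 1 = \nu_V\tau_V$, $\tilde\lambda_V = 0 = \nu_V p_H$, $\tilde\lambda_H = 0 = \lambda_V$; condition~(3): $\tilde\tau_V = 1 = \nu_V p_0$ and $\nu_V\tilde\tau_H = 1 = \lambda_0$; condition~(4): $\nu_V\tilde p_V = 0 = \lambda_H$, $\tilde p_H = 0 = p_V$, $\nu_V\tilde p_0 = 1 = \tau_H$; condition~(5): $\tilde p_V+\tilde p_H+\tilde p_0 = 1 = p_V+p_H+p_0$. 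Since $N_{\PPP(1,1)}$ is already known to be stationary for $\para_{\text{loop}}$ from the previous step, the second part of Theorem~\ref{thm:pi2} applies and yields that $(\para_{\text{loop}},N_{\PPP(1,1)})$ is $s_{\pi/2}$-reversible.

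Finally I would assemble the group-theoretic conclusion. Proposition~\ref{prop:r} applied to $\para_{\text{loop}}$ gives $r$-quasi-reversibility, and since $\para_{\text{loop}}$ is symmetric under swapping the $V$ and $H$ roles ($\lambda_V = \lambda_H$, $\tau_V = \tau_H$, $p_V = p_H$) and $N_{\PPP(1,1)}$ satisfies $\tilde N([a,b]) = N([-b,-a])$ (a PPP of constant intensity $1$ on $\RR$ is invariant under $x \mapsto -x$), the $r$-reverse is $(\para_{\text{loop}},N_{\PPP(1,1)})$ itself, i.e.\ it is $r$-reversible. Now $D_4$ is generated by $\{r, s_{\pi/2}\}$ (indeed $s_{\pi/2}$ generates the rotation subgroup and $r$ is a reflection), and $g$-reversibility means $g\bigl(\mathcal{L}_{\RR^2}(\para_{\text{loop}},N_{\PPP(1,1)})\bigr) \eqd \mathcal{L}_{\RR^2}(\para_{\text{loop}},N_{\PPP(1,1)})$, so the set of $g \in D_4$ for which the couple is $g$-reversible is closed under composition and inverses; containing the generators $r$ and $s_{\pi/2}$, it is all of $D_4$. (One small point to spell out: $s_\pi$-reversibility is not strictly needed for this generation argument but is recorded above as a consistency check and because $s_\pi = s_{\pi/2}^2$ follows automatically.)

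The main obstacle is essentially clerical rather than conceptual: one must be careful that the numbering/convention of the eight parameters in $\para = (\lambda_0,\lambda_V,\lambda_H,\tau_V,\tau_H,p_V,p_H,p_0)$ is matched correctly against each of the five conditions in the two theorems — in particular that $\para_{\text{loop}}$ is written as $(1,0,0,1,1,0,0,1)$ in the body but appears as $(1,0,0,1,1,0,0,0)$ in one of the figure captions, so one should confirm the intended value of $p_0$ (it must be $1$ for condition~(2) of Theorem~\ref{thm:pi}, $\nu_V\nu_H p_0 = \lambda_0 = 1$, to hold) and note the discrepancy. Beyond that, the only genuinely structural ingredient is the observation that $\{r,s_{\pi/2}\}$ generates $D_4$ together with the fact that the collection of symmetries fixing the law of the space-time diagram forms a subgroup, which makes the leap from "$r$- and $s_{\pi/2}$-reversible" to "$g$-reversible for all $g \in D_4$" immediate.
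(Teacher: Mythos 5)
Your proposal is correct and follows essentially the same route as the paper: since $A=B_V=B_H=0$ rules out the corollaries, one checks the five conditions of Theorem~\ref{thm:pi} and of Theorem~\ref{thm:pi2} directly with $\tilde{\para}=\para_{\text{loop}}$ and $\nu_V=\nu_H=1$, combines this with the $r$-reversibility coming from Proposition~\ref{prop:r} (using the $V$/$H$ symmetry of $\para_{\text{loop}}$ and of $N_{\PPP(1,1)}$), and concludes for all of $D_4$ since $r$ and $s_{\pi/2}$ generate it and the invariance property is stable under composition. Your explicit verification of the ten conditions, the remark that stationarity from Theorem~\ref{thm:pi} is what activates the second part of Theorem~\ref{thm:pi2}, and the observation that the figure caption's value $p_0=0$ is inconsistent with the definition $\para_{\text{loop}}=(1,0,0,1,1,0,0,1)$ are all accurate fillings-in of details the paper leaves implicit.
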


\section{Heuristic of the quasi-reversibility} \label{sec:heu}
The heuristics are quite simple to understand. We use a similar heuristic than in~\cite{BCES23}, but the idea is slightly improved to be more precise. First, the heuristic of the $s_{\pi}$-quasi-reversibility is given and, then, the one of the $s_{\pi/2}$-quasi-reversibility that is slightly more complicated.

\subsection{Heuristic for $s_\pi$} \label{sec:heupi}
Take two \Bullet models, one with parameter $\para$ and the other one with parameter $\tilde{\para}$, that have the probability measure $N_{\PPP(\nu_H,\nu_V)}$ as a common stationary measure. Suppose that the two \Bullet models under $N_{\PPP(\nu_H,\nu_V)}$ are $s_\pi$-reverse. Then, if $A$ is an event inside the rectangle $[-a,a] \times [-b,b]$ that occurs with probability $p$ under the probability $\rho(\para,N_{\PPP(\nu_H,\nu_V)})$, then the reverse event $s_\pi(A)$ must occur with probability $p$ under the probability $\rho(\tilde{\para},N_{\PPP(\nu_H,\nu_V)})$.\par
\medskip
In particular, if we consider the elementary event $A$, represented on line~4 of Table~\ref{tab:heupi}, that corresponds to a unique horizontal particle that turns and becomes a vertical particle at position $(x,y)$ in the rectangle $[-a,a] \times [-b,b]$. Its ``probability'' (in reality, its Radon--Nykodim density under a well-chosen measure on the set of segments inside the rectangle $[-a,a] \times [-b,b]$, see Section~\ref{sec:dens} for the technical details) under $\rho(\para,N_{\PPP(\nu_H,\nu_V)})$ is
\begin{equation} \label{eq:hvp}
  \underbrace{\nu_H \ind{-b < y < b} e^{-2 \nu_H b}}_{E_1} \underbrace{e^{-2 \nu_V a}}_{E_2} \underbrace{e^{-(\lambda_H+\tau_H) (a+x)}}_{E_3}  \underbrace{\tau_H \ind{-a<x<a}}_{E_4} \underbrace{e^{-(\lambda_V+\tau_V)(b-y)}}_{E_5} \underbrace{e^{-4 \lambda_0 a b}}_{E_6}.
\end{equation}
Let us explain Equation~\eqref{eq:hvp}.
\begin{itemize}
\item The term $E_1$ corresponds to the probability of a unique horizontal particle that enters at the left of the rectangle at position ordinate $y$.
\item The term $E_2$ corresponds to the probability of none vertical particle enters at the bottom of the rectangle.
\item The term $E_3$ corresponds to the probability of the horizontal particle to do not create a horizontal particle on $[-a,x] \times \{y\}$.
\item The term $E_4$ corresponds to the probability of a horizontal particle to turn inside the rectangle at abscissa $x$.
\item The term $E_5$ corresponds to the probability of the vertical particle to do not create a horizontal particle on $\{x\} \times [y,b]$.
\item The term $E_6$ corresponds to the probability that there is no ex-nihilo creation in the rectangle.
\item As those events are independent, we multiply all of them.
\end{itemize}

Similarly, the reverse event $s_\pi(A)$ that corresponds to a vertical particle that becomes a horizontal particle at position $(-x,-y)$ has ``probability''
\begin{equation}
  \underbrace{\nu_V \ind{-a < -x < a} e^{-2 \nu_V a}} \underbrace{e^{-2 \nu_H b}}   \underbrace{e^{-(\tilde{\lambda}_V+\tilde{\tau}_V) (b-y)}} \underbrace{\tilde{\tau}_V \ind{-b < -y < b}} \underbrace{e^{-(\tilde{\lambda}_H+\tilde{\tau}_H)(a+x)}} \underbrace{e^{-4 \tilde{\lambda}_0 a b}}.
\end{equation}

If we suppose that $\lambda_0 = \tilde{\lambda}_0$, $\lambda_V+\tau_V = \tilde{\lambda}_V + \tilde{\tau}_V$, $\lambda_H + \tau_H = \tilde{\lambda}_H + \tilde{\tau}_H$ and $\nu_H \tau_H = \nu_V \tilde{\tau}_V$, the equality is satisfied.

\begin{table}[p]
  \begin{center}
  \begin{tabular}{|m{3cm}|m{5cm}||m{5cm}|m{3cm}|}
    \hline
    Configuration & probability & $s_\pi$ probability & $s_\pi$-configuration \\ \hline
    \includegraphics{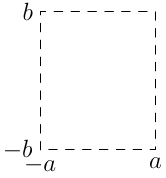} & $e^{-2 \nu_H b} e^{-2 \nu_V a} e^{-4 \red{\lambda_0} ab}$ & $e^{-2 \nu_H b} e^{-2 \nu_V a} e^{-4 \red{\tilde{\lambda}_0} ab}$ & \includegraphics{DESSIN/Heuristique/Vide.pdf} \\ \hline
    \includegraphics{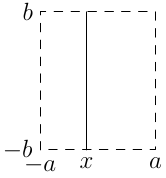} & $\nu_V \ind{-a< x < a}\, e^{-2 \red{(\lambda_V+\tau_V)} b}$ $e^{-2 \nu_H b} e^{-2 \nu_V a} e^{-4 \lambda_0 ab}$ & $\nu_V \ind{-a<-x<a}\, e^{-2 \red{(\tilde{\lambda}_V+\tilde{\tau}_V)} b}$ $e^{-2 \nu_H b} e^{-2 \nu_V a} e^{-4 \tilde{\lambda}_0 ab}$ & \includegraphics{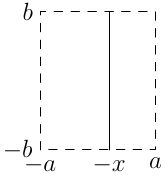} \\ \hline
    \includegraphics{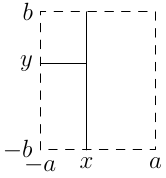} & $\red{p_V}\,  \red{\nu_H} \ind{-b< y < b}\, e^{-(\lambda_H+\tau_H) (a+x)}$ $\nu_V \ind{-a< x < a}\, e^{-2 (\lambda_V+\tau_V) b}$  $e^{-2 \nu_H b} e^{-2 \nu_V a} e^{-4 \lambda_0 ab}$ & $\nu_V \ind{-a<-x<a} \, e^{-2 (\tilde{\lambda}_V+\tilde{\tau}_V) b}$ $\red{\tilde{\lambda}_V} \ind{-b<-y<b}\, e^{-(\tilde{\lambda}_H+\tilde{\tau}_H) (a+x)}$ $e^{-2 \nu_H b} e^{-2 \nu_V a} e^{-4 \tilde{\lambda}_0 ab}$ & \includegraphics{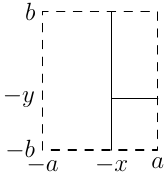} \\ \hline
    \includegraphics{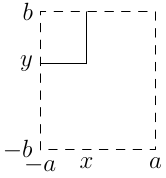} & $\red{\nu_H} \ind{-b< y < b} \, e^{-(\lambda_H+\tau_H) (a+x)}$ $\red{\tau_H} \ind{-a< x < a} \, e^{-(\lambda_V+\tau_V) (b-y)}$ $e^{-2 \nu_H b} e^{-2 \nu_V a} e^{-4 \lambda_0 ab}$ & $\red{\nu_V} \ind{-a< -x < a} \, e^{-(\tilde{\lambda}_V+\tilde{\tau}_V) (b-y)}$ $\red{\tilde{\tau}_V} \ind{-b< -y < b} \, e^{-(\tilde{\lambda}_H+\tilde{\tau}_H) (a+x)}$ $e^{-2 \nu_H b} e^{-2 \nu_V a} e^{-4 \tilde{\lambda}_0 ab}$ & \includegraphics{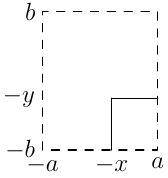}\\ \hline
    \includegraphics{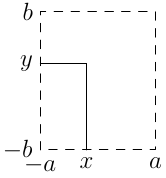} & $\red{p_0}\, \red{\nu_H} \ind{-b< y < b} \, e^{-(\lambda_H+\tau_H) (a+x)}$ $\red{\nu_V} \ind{-a< x < a} \, e^{-(\lambda_V+\tau_V) (b+y)}$ $e^{-2 \nu_H b} e^{-2 \nu_V a} e^{-4 \lambda_0 ab}$ & $\red{\tilde{\lambda}_0} \ind{-a< -x < a} \ind{-b<-y<b}$ $e^{-(\tilde{\lambda}_H+\tilde{\tau}_H) (a+x)} e^{-(\tilde{\lambda}_V+\tilde{\tau}_V) (b+y)}$ $e^{-2 \nu_H b} e^{-2 \nu_V a} e^{-4 \tilde{\lambda}_0 ab}$ & \includegraphics{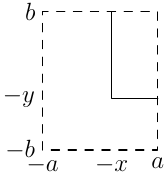}\\ \hline
    \includegraphics{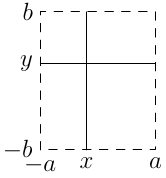} & $\red{(1-p_V-p_H-p_0)}$ $\nu_H \ind{-b< y < b} \, e^{-2(\lambda_H+\tau_H) a}$ $\nu_V \ind{-a< x < a}\, e^{-2(\lambda_V+\tau_V)b}$ $e^{-2 \nu_H b} e^{-2 \nu_V a} e^{-4 \lambda_0 ab}$ & $\red{(1-\tilde{p}_V-\tilde{p}_H-\tilde{p}_0)}$ $\nu_H \ind{-b< -y < b}\, e^{-2(\tilde{\lambda}_H+\tilde{\tau}_H) a}$ $\nu_V \ind{-a< -x < a}\, e^{-2(\tilde{\lambda}_V+\tilde{\tau}_V)b}$ $e^{-2 \nu_H b} e^{-2 \nu_V a} e^{-4 \tilde{\lambda}_0 ab}$ & \includegraphics{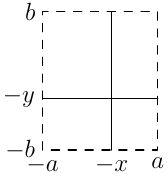}\\ \hline
  \end{tabular}
\end{center}
\caption{The heuristics to find the conditions of Theorem~\ref{thm:pi}.}%
\label{tab:heupi}
\end{table}
\medskip

Considering all the elementary configurations given in Table~\ref{tab:heupi} and similar events (reversing the $x$-and $y$-axes), we obtain the five conditions expressed in Theorem~\ref{thm:pi}. Proof of Theorem~\ref{thm:pi} consists of checking that those conditions are sufficient for any configuration and not just for the elementary ones. This is done in Section~\ref{sec:proof-pi}.

\subsection{Heuristic for $s_{\pi/2}$} \label{sec:heupi2}
The heuristic for $s_{\pi/2}$ is quite more complex because the stationary measure for one of them is not explicit. To get around this difficulty, we consider quotient of events that have the same initial condition. Let us illustrate the idea with the case that a unique vertical particle enters in $(x,-b)$ at the bottom of the rectangle $[-a,a] \times [-b,b]$, see Table~\ref{tab:heupi2-elem} for the elementary cases without ex-nihilo creation and Table~\ref{tab:heupi2-complex} for elementary cases with an ex-nihilo creation. A study, similar to the one done for the elementary cases in $s_\pi$-quasi-reversibility, suggests that some good sufficient conditions are the five expressed in Theorem~\ref{sec:pi2}. Moreover, it suggests also that $\tilde{N} = N_{\PPP(\nu_V,\nu_H)}$.

\begin{table}
  \begin{tabular}{|m{3cm}|m{5cm}||m{5cm}|m{3cm}|}
    \hline 
    \includegraphics{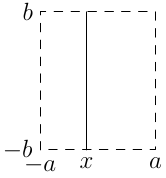} & $e^{-2\red{(\tilde{\lambda}_V+\tilde{\tau}_V)}b}$ $\grey{\tilde{N}(x) e^{-4 \tilde{\lambda}_0 ab}}$ & $\grey{\nu_H \ind{-a<-x<a}}\, e^{-2 \red{(\lambda_H+\tau_H)} b}$ $\grey{e^{-2 \nu_H a} e^{-2 \nu_V b} e^{-4 \lambda_0 ab}}$ & \includegraphics{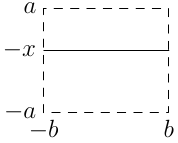} \\ \hline 
    \includegraphics{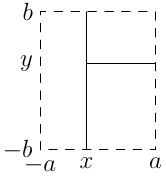} & $\red{\tilde{\lambda}_V} \ind{-b<y<b}\, e^{-(\tilde{\lambda}_H+\tilde{\tau}_H)(a-x)}$ $e^{-2(\tilde{\lambda}_V+\tilde{\tau}_V)b}$ $\grey{\tilde{N}(x) e^{-4 \tilde{\lambda}_0 ab}}$ & $\red{p_H} \, \grey{\nu_H \ind{-a<-x<a}}\, e^{-2 (\lambda_H+\tau_H) b}$ $\red{\nu_V} \ind{-b<y<b}\, e^{- (\lambda_V+\tau_V) (a-x)}$ $\grey{e^{-2 \nu_H a} e^{-2 \nu_V b} e^{-4 \lambda_0 ab}}$ & \includegraphics{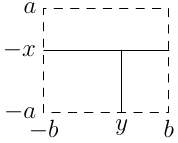} \\ \hline 
    \includegraphics{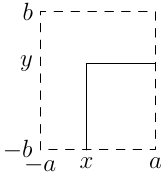} & $\red{\tilde{\tau}_V} \ind{-b<y<b}\, e^{-(\tilde{\lambda}_H+\tilde{\tau}_H)(a-x)}$ $e^{-(\tilde{\lambda}_V+\tilde{\tau}_V)(y+b)}$ $\grey{\tilde{N}(x) e^{-4 \tilde{\lambda}_0 ab}}$ & $\red{p_0} \, \grey{\nu_H \ind{-a<-x<a}}\, e^{-(\lambda_H+\tau_H) (y+b)}$ $\red{\nu_V} \ind{-b<y<b}\, e^{- (\lambda_V+\tau_V) (a-x)}$ $\grey{e^{-2 \nu_H a} e^{-2 \nu_V b} e^{-4 \lambda_0 ab}}$ & \includegraphics{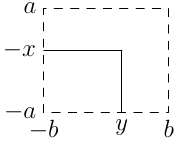} \\ \hline 
   \includegraphics{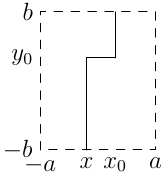} & $\blue{\tilde{\tau}_V} \ind{-b<y_0<b} \, \red{\tilde{\tau}_H} \ind{x<x_0<a}$ $e^{-(\tilde{\lambda}_H+\tilde{\tau}_H)(x_0-x)} e^{-2(\tilde{\lambda}_V+\tilde{\tau}_V)b}$ $\grey{\tilde{N}(x) e^{-4 \tilde{\lambda}_0 ab}}$ & $\blue{p_0} \red{\lambda_0} \ind{-b<y_0<b} \ind{-a<-x_0<-x}$ $\grey{\nu_H \ind{-a<-x<a}}\, e^{-2(\lambda_H+\tau_H)b}$ $\ind{-b<y<b}\, e^{- (\lambda_V+\tau_V) (x_0-x)}$ $\grey{e^{-2 \nu_H a} e^{-2 \nu_V b} e^{-4 \lambda_0 ab}} \frac{\blue{\nu_V}}{\red{\nu_V}}$ & \includegraphics{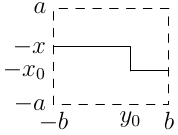}\\ \hline
  \end{tabular}
  \caption{Heuristic for the $s_{\pi/2}$-quasi-reversibility: four elementary cases.}%
  \label{tab:heupi2-elem}
\end{table}

\begin{table}
  \begin{tabular}{|m{3cm}|m{5cm}||m{5cm}|m{3cm}|}
    \hline
    \includegraphics{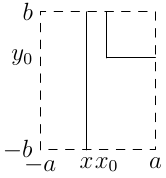} & $e^{-2(\tilde{\lambda}_V+\tilde{\tau}_V)b}$ $\red{\tilde{\lambda}_0} \ind{x<x_0<a} \ind{-b<y_0<b}$ $e^{-(\tilde{\lambda}_V+\tilde{\tau}_V)(b-y_0)} e^{-(\tilde{\lambda}_H+\tilde{\tau}_H)(a-x_0)}$ $\grey{\tilde{N}(x) e^{-4 \tilde{\lambda}_0 ab}}$ & $\grey{\nu_H \ind{-a<-x<a}}\, e^{-2 (\lambda_H+\tau_H) b}$ $\red{\nu_V} \ind{-b<y_0<b}\, e^{- (\lambda_V+\tau_V) (a-x_0)}$ $\red{\tau_V} \ind{-a<-x_0<-x}\, e^{-(\lambda_H+\tau_H)(b-y_0)}$ $\grey{e^{-2 \nu_H a} e^{-2 \nu_V b} e^{-4 \lambda_0 ab}}$ & \includegraphics{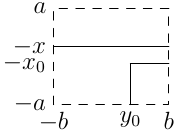} \\ \hline 
    \includegraphics{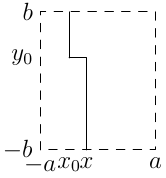} & $\red{\tilde{p}_0} \blue{\tilde{\lambda}_0} \ind{-a<x_0<x} \ind{-b<y_0<b}$ $e^{-(\tilde{\lambda}_H+\tilde{\tau}_H)(x-x_0)} e^{-2(\tilde{\lambda}_V+\tilde{\tau}_V)b}$ $\grey{\tilde{N}(x) e^{-4 \tilde{\lambda}_0 ab}}$ & $\grey{\nu_H \ind{-a<-x<a}}\, \red{\tau_H} \ind{-b<y_0<b}$ $\blue{\tau_V}  \ind{-x<-x_0<a}$ $e^{-2 (\lambda_H+\tau_H) b} e^{- (\lambda_V+\tau_V) (a-x_0)}$ $\grey{e^{-2 \nu_H a} e^{-2 \nu_V b} e^{-4 \lambda_0 ab}} \frac{\blue{\nu_V}}{\red{\nu_V}}$ & \includegraphics{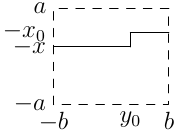} \\ \hline 
    \includegraphics{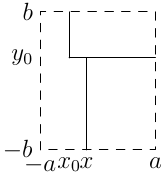} & $\red{\tilde{p}_H} \blue{\tilde{\lambda}_0} \ind{-a<x_0<x} \ind{-b<y_0<b}$ $e^{-2(\tilde{\lambda}_V+\tilde{\tau}_V)b} e^{-(\tilde{\lambda}_H+\tilde{\tau}_H)(a-x_0)}$ $\grey{\tilde{N}(x) e^{-4 \tilde{\lambda}_0 ab}}$ & $\grey{\nu_H \ind{-a<-x<a}}\, \blue{\nu_V} \ind{-b<y_0<b}$ $\red{p_V} \blue{\tau_V}  \ind{-x<-x_0<a}$ $e^{-2 (\lambda_H+\tau_H) b} e^{- (\lambda_V+\tau_V) (x-x_0)}$ $\grey{e^{-2 \nu_H a} e^{-2 \nu_V b} e^{-4 \lambda_0 ab}}$ & \includegraphics{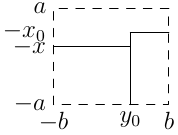} \\ \hline 
    \includegraphics{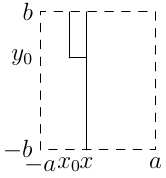} & $\red{\tilde{p}_V} \blue{\tilde{\lambda}_0} \ind{-a<x_0<x} \ind{-b<y_0<b}$ $e^{-(\tilde{\lambda}_H+\tilde{\tau}_H)(x-x_0)} e^{-(\tilde{\lambda}_V+\tilde{\tau}_V)(b-y_0)}$ $e^{-2(\tilde{\lambda}_V+\tilde{\tau}_V)b}$ $\grey{\tilde{N}(x) e^{-4 \tilde{\lambda}_0 ab}}$ & $\grey{\nu_H \ind{-a<-x<a}}\, e^{-2 (\lambda_H+\tau_H) b}$ $\red{\lambda_H} \ind{-b<y_0<b} \, e^{- (\lambda_V+\tau_V) (x-x_0)}$ $\blue{\tau_V} \ind{-x<-x_0<a} \, e^{- (\lambda_H+\tau_H) (b-y_0)}$ $\grey{e^{-2 \nu_H a} e^{-2 \nu_V b} e^{-4 \lambda_0 ab}} \frac{\blue{\nu_V}}{\red{\nu_V}}$ & \includegraphics{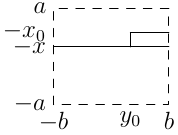} \\ \hline 
    \includegraphics{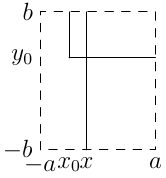} & $\red{(1-\tilde{p}_0-\tilde{p}_H-\tilde{p}_V)}$ $\blue{\tilde{\lambda}_0} \ind{-a<x_0<x} \ind{-b<y_0<b}$ $e^{-(\tilde{\lambda}_H+\tilde{\tau}_H)(a-x_0)} e^{-(\tilde{\lambda}_V+\tilde{\tau}_V)(b-y_0)}$ $e^{-2(\tilde{\lambda}_V+\tilde{\tau}_V)b} \grey{\tilde{N}(x) e^{-4 \tilde{\lambda}_0 ab}}$ & $\grey{\nu_H \ind{-a<-x<a}}\, \blue{\nu_V} \ind{-b<y_0<b}$ $\red{(1-p_0-p_H-p_V)}$ $\blue{\tau_V}  \ind{-x<-x_0<a}\, e^{-(\lambda_H+\tau_H) (b-y_0)}$ $e^{-2 (\lambda_H+\tau_H) b} e^{- (\lambda_V+\tau_V) (a-x_0)}$ $\grey{e^{-2 \nu_H a} e^{-2 \nu_V b} e^{-4 \lambda_0 ab}}$ & \includegraphics{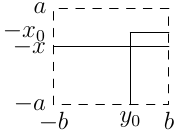} \\ \hline 
  \end{tabular}
  \caption{Heuristic for the $s_{\pi/2}$-quasi-reversibility: five complex cases.}%
  \label{tab:heupi2-complex}
\end{table}

\section{Proofs of Theorems~\ref{thm:pi} and~\ref{thm:pi2}} \label{sec:proof}
The proofs of Theorems~\ref{thm:pi} and~\ref{thm:pi2} follow the idea developed in~\cite{BCES23}. As explained in the heuristic, there is a first step that consists in the construction of the densities of \Bullet models on any rectangle $[-a,a] \times [-b,b]$, see Section~\ref{sec:dens}. When this density is constructed, it suffices to check that the density of the \Bullet model with parameter $\para$ under the initial condition distributed according to $N_{\PPP(\nu_H,\nu_V)}$ is the density of the \Bullet model with parameter $\tilde{\para}$ under the initial condition distributed according to $N_{\PPP(\nu_H,\nu_V)}$ or to $N_{\PPP(\nu_V,\nu_H)}$ after a rotation of angle $\pi$ or $\pi/2$ of the plane. This is done respectively in Sections~\ref{sec:proof-pi} and~\ref{sec:proof-pi2}.

\subsection{Construction of the density of a \Bullet model} \label{sec:dens}
In this section, we define formally the probability densities presented in Section~\ref{sec:heu}. This induces many formal definitions.

\paragraph{The set of configurations:}
We focus, first, on the set of configurations of a \Bullet model inside any rectangle $[-a,a]\times[-b,b]$.\par

Let $a,b$ be two positive real numbers. We define the two sets $\set_V$ and $\set_H$ of vertical and horizontal segments inside the rectangle $[-a,a] \times [-b,b]$: 
\begin{align}
  & \set_V = \{[(x,y),(x,y+t)] : -a < x < a , -b \leq y<y+t \leq b\},\\
  & \set_H = \{[(x,y),(x+t,y)] : -a \leq x < x+t \leq a , -b<y<b\}.
\end{align}
Now, we define the set of configurations $\mathcal{U}$ of any realisation of a \Bullet model inside the rectangle $[-a,a]\times[-b,b]$:
\begin{align} \label{eq:config}
  \mathcal{U} & = \{ U \subset \mathcal{P}(\set_V \cup \set_H) : |U|< \infty, \nonumber\\
              & \qquad \text{if } S = [(x,y_0),(x,y_1)] \in U \cap \set_V\text{, then}\ (y_0=-b \text{ or } \exists S' \in U \cap \set_H,\ (x,y_0) \in S') \text{ and}\nonumber\\
              & \qquad \qquad (y_1 = b \text{ or } \exists S'' \in U \cap \set_H,\ (x,y_1) \in S''),\nonumber\\
              & \qquad \text{if } S = [(x_0,y),(x_1,y)] \in U \cap \set_H\text{, then}\ (x_0=-a \text{ or } \exists S' \in U \cap \set_V,\ (x_0,y) \in S') \text{ and}\nonumber\\
              & \qquad \qquad (x_1 = a \text{ or } \exists S'' \in U \cap \set_V,\ (x_1,y) \in S''),\nonumber\\
              & \qquad \text{if } S = [(x,y_0),(x,y_1)],\ S'= [(x',y'_0),(x',y'_1)] \in U \cap \set_V \text{ with } S \neq S' \text{, then}\ x \neq x',\nonumber\\
              & \qquad \text{if } S = [(x_0,y),(x_1,y)],\ S'= [(x'_0,y'),(x'_1,y')] \in U \cap \set_H \text{ with } S \neq S' \text{, then}\ y \neq y'\}.
\end{align}
In words, this formalism tells that:
\begin{itemize}
\item any realisation is a finite set of vertical and horizontal segments,
\item a vertical segment in the set begins at the bottom edge of the rectangle $[-a,a] \times [-b,b]$ or at a horizontal segment in the set and 
\item ends at the top edge of the rectangle or at a horizontal segment in the set,
\item similarly for the beginnings and the ends of horizontal segments,
\item the two last conditions are technical: they express the fact that two vertical, resp.\ horizontal, different segments do not share the same abscissa, resp.\ ordinate. It permits to assure the uniqueness of vertical, resp.\ horizontal, segments. Indeed, when a vertical, resp.\ horizontal, particle creates, or kills, or crosses a horizontal, resp.\ vertical, one, without these conditions, we could split its representation in many segments. The disadvantage of these conditions is that it forbids two different vertical, resp.\ horizontal, particles to share the same abscissa, resp.\ ordinate. Hopefully, in the random \Bullet model considered here, this never happens almost surely.
\end{itemize}

\paragraph{Equivalence relation, the skeleton, on $\mathcal{U}$:}
Let $U$ be an element of $\mathcal{U}$. We denote by $n=|U \cap \set_V|$, the number of vertical segments in $U$ and by $m = |U \cap \set_H|$ the number of horizontal segments in $U$. Due to the last two conditions of Equation~\eqref{eq:config}, we can order them by their abscissas or by their ordinates. Hence, we denote $x_1, \dots, x_n$ the abscissas of vertical ones such that $-a<x_1<x_2<\dots<x_n<a$ and by $y_1,\dots,y_m$ the ordinates of the horizontal ones with $-b<y_1<\dots<y_m<b$.\par

Take two elements $U,U' \in \mathcal{U}$. We say that they share the same skeleton, denoted $U \sk U'$ if there exists two one-to-one and increasing functions, one $\phi_x$ from $[-a,a]$ to $[-a,a]$ and the other one $\phi_y$ from $[-b,b]$ to $[-b,b]$, such that $\{[(\phi_x(x_0),\phi_y(y_0)),(\phi_x(x_1),\phi_y(y_1))] : S =[(x_0,y_0),(x_1,y_1)] \in U'\} = U$. It is an equivalent relation on $\mathcal{U}$.\par

Let denote by $\mathcal{K} = \mathcal{U}/\sk$ the set of skeletons. A canonical representation for a skeleton is to take the element $U$ in that class such that $x_i = -a + i \frac{2a}{n+1}$ and $y_i = -b + i \frac{2b}{m+1}$, see Figure~\ref{fig:skelette}.\par

\begin{figure}
  \begin{center}
    \begin{tabular}{cc}
      \includegraphics[width = 0.45 \textwidth]{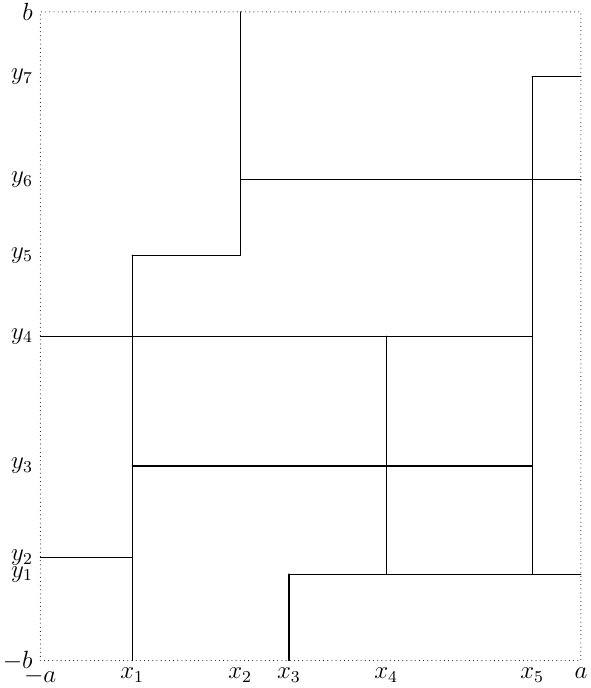} & \includegraphics[width = 0.45 \textwidth]{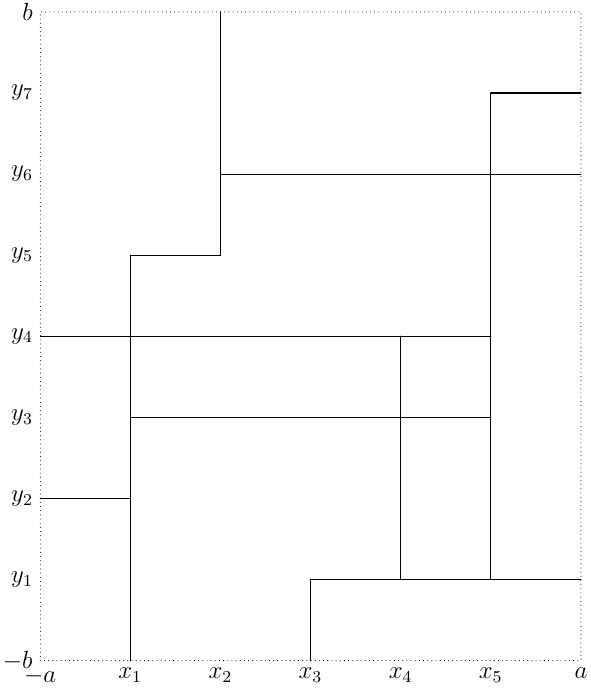}
    \end{tabular}
  \end{center}
  \caption{On the left: an element of $\mathcal{U}$. On the right: another element of $\mathcal{U}$ with the same skeleton, but with different coordinates in their equivalent class $K(U)$.}%
  \label{fig:skelette}
\end{figure}

Now, any element $U$ of $\mathcal{U}$ can be uniquely represented by its skeleton $K(U)$, the abscissas $x(U) = (x_1,\dots,x_n)$ of its vertical segments and the ordinates $y(U) = (y_1,\dots,y_m)$ of its horizontal ones. We call coordinates of $U$ in $K(U)$ the uplet $(x_1,\dots,x_n;y_1,\dots,y_m)$.\par

\paragraph{Shared statistics by elements with the same skeleton:}
Take $U$ and $U'$ two elements of $\mathcal{U}$ with the same skeleton $K(U) = K(U') = K$. They share many common statistics. The first two obvious are
\begin{itemize}
\item the number $n(U)$ of vertical segments and
\item the number $m(U)$ of horizontal segments.
\end{itemize}
But, as the relative positions of vertical and horizontal segments are the same, we can observe $13$ types of points whose cardinals are the same, see also~\cite[Sections~5.1~and~5.2]{BCES23}:
\begin{itemize}
\item the number of vertical entries
  \begin{displaymath}
    \VE(U) =  \card{\{x \in (-a,a) : \exists y_2 \in (-b,b] : [(x,-b),(x,y_2)] \in U\}},
  \end{displaymath}

\item the number of vertical exits
  \begin{displaymath}
    \VS(U) =  \card{\{x \in (-a,a) : \exists y_1 \in [-b,b) : [(x,y_1),(x,b)] \in U\}},
  \end{displaymath}

\item the number of vertical splits
  \begin{align*}
    \HB(U) = \text{card} (\{ (x,y) \in (-a,a) \times (-b,b) :\ & \exists x_2 \in (x,a],\, [(x,y),(x_2,y)] \in U \text{ and } \\
                                                               &\exists y_1 \in [-b,y),\, \exists y_2 \in (y,b],\, [(x,y_1),(x,y_2)] \in U \} ),
  \end{align*}
  
\item the number of vertical turns
  \begin{align*}
    \HT(U) = \text{card} (\{(x,y) \in (-a,a) \times (-b,b) :\ & \exists x_2 \in (x,a],\, [(x,y),(x_2,y)] \in U \text{ and } \\
                                                              & \exists y_1 \in [-b,y),\, [(x,y_1),(x,y)] \in U \} ),
  \end{align*}
  
\item the number of vertical coalescences 
  \begin{align*}
    \HA(U) = \text{card} ( \{(x,y) \in (-a,a) \times (-b,b) :\ & \exists x_1 \in [-a,x),\, [(x_1,y),(x,y)] \in U \text{ and } \\
                                                               & \exists y_1 \in [-b,y),\, \exists y_2 \in (y,b],\, [(x,y_1),(x,y_2)] \in U \} ),
  \end{align*}
  
\item all the horizontal counterparts of the previous ones: the numbers of horizontal entries $\HE(U)$, exits $\HS(U)$, splits $\VB(U)$, turns $\VT(U)$ and coalescences $\VA(U)$,
  
\item the number of crossings
  \begin{align*}
    \CC(U) = \text{card} ( \{(x,y) \in (-a,a) \times (-b,b) :\ & \exists x_1 \in [-a,x),\, \exists x_2 \in (x,a],\, [(x_1,y),(x_2,y)] \in U \text{ and } \\
                                                               & \exists y_1 \in [-b,y),\, \exists y_2 \in (y,b],\, [(x,y_1),(x,y_2)] \in U \} ),
  \end{align*}
  
\item the number of spontaneous splits (ex-nihilo creations)
  \begin{align*}
    \OB(U) = \text{card} (\{(x,y) \in (-a,a) \times (-b,b) :\ & \exists x_2 \in (x,a],\, [(x,y),(x_2,y)] \in U \text{ and } \\
                                                              & \exists y_2 \in (y,b],\, [(x,y),(x,y_2)] \in U \} ),
  \end{align*}

\item the number of double coalescences (total annihilations)
  \begin{align*}
    \OA(U) = \text{card} (\{(x,y) \in (-a,a) \times (-b,b) :\ & \exists x_1 \in [-a,x),\, [(x_1,y),(x,y)] \in U \text{ and } \\
                                                              & \exists y_1 \in [-b,y),\, [(x,y_1),(x,y)] \in U \} ),
  \end{align*}
\end{itemize}

As any of those elements depend on $U$ only through its skeleton $K(U)$, in the sequel, we write indifferently $\VE(U)$ or $\VE(K(U))$ depending on what we would like to insist.
  
\paragraph{A distance on $\mathcal{U}$ such that $\mathcal{U}$ is a Polish space:}
We define the distance $d$ on $\mathcal{U}$ by: for any $U,U' \in \mathcal{U}$,
\begin{equation}
  d(U,U') = \begin{cases}
    3 & \text{if } K(U) \neq K(U'),\\
    \displaystyle \ind{n \neq 0} \ \frac{1}{n} \sum_{i=1}^n \frac{ \big| x_i - x'_i \big|}{2a} + \ind{m \neq 0} \ \frac{1}{m} \sum_{j=1}^m \frac{ \big| y_j - y'_j \big|}{2b} & \text{else},
  \end{cases}
\end{equation}
where $(x_1,\dots,x_n;y_1,\dots,y_m)$ are the coordinates of $U$ in $K(U)$ and $(x'_1,\dots,x'_n;y'_1,\dots,y'_m)$ the ones of $U'$ in $K(U)=K(U')$.

\begin{lemma}
  The space of configurations $\mathcal{U}$ equipped with the distance $d$ is a Polish space.
\end{lemma}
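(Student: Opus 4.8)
The plan is to verify the three defining properties of a Polish space in turn: that $d$ is a metric, that $(\mathcal{U},d)$ is separable, and that it is complete. Throughout I will use the canonical representation of a skeleton and the fact that, once two configurations share a skeleton $K$ with $n$ vertical and $m$ horizontal segments, they are each determined by their coordinate tuple in $(-a,a)^n \times (-b,b)^m$ (with the strict-ordering constraints $-a<x_1<\dots<x_n<a$ and $-b<y_1<\dots<y_m<b$).

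\medskip
\textbf{Step 1: $d$ is a metric.} Symmetry is immediate from the formula. For $d(U,U')=0$: since the two ``else'' summands are nonnegative, $d(U,U')=0$ forces $K(U)=K(U')$ and then all $|x_i-x'_i|$ and $|y_j-y'_j|$ to vanish, so $U$ and $U'$ have the same skeleton and the same coordinates, hence $U=U'$. For the triangle inequality $d(U,U'')\le d(U,U')+d(U',U'')$: if any of the three skeletons differ pairwise the right-hand side is at least $3$ by at least one term hitting the value $3$ (and the left-hand side is at most $3$, since the ``else'' branch is bounded by $\tfrac1n\cdot n + \tfrac1m\cdot m = 2 < 3$, using $|x_i-x'_i|\le 2a$ and $|y_j-y'_j|\le 2b$); and if all three skeletons coincide, the inequality reduces to the ordinary triangle inequality for the $\ell^1$-type average, which follows termwise from $|x_i-x''_i|\le|x_i-x'_i|+|x'_i-x''_i|$ and likewise for the $y$'s. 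The only slightly delicate case is when exactly two of the three agree, say $K(U)=K(U')\ne K(U'')$: then the right-hand side is at least $d(U',U'')=3\ge d(U,U'')$, done.

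\medskip
\textbf{Step 2: separability.} The set $\mathcal{K}$ of skeletons is countable, because a skeleton is determined by the finite combinatorial incidence data of how $n$ vertical and $m$ horizontal segments meet (equivalently, by one canonical representative $U\in\mathcal{U}$ with dyadic-rational coordinates $x_i=-a+i\tfrac{2a}{n+1}$, $y_j=-b+j\tfrac{2b}{m+1}$), and there are only finitely many such incidence patterns for each $(n,m)\in\NN^2$. For a countable dense set I take, for each skeleton $K$ with $n$ vertical and $m$ horizontal segments, all configurations in $K$ whose coordinate tuples $(x_1,\dots,x_n;y_1,\dots,y_m)$ are rational and satisfy the strict ordering; this is a countable union of countable sets, hence countable, and it is dense because inside a fixed skeleton $K$ the admissible coordinate region is an open subset of $(-a,a)^n\times(-b,b)^m$ on which $d(\cdot,\cdot)$ restricted to that skeleton is (a scaled version of) the $\ell^1$ metric, so rationals approximate arbitrarily well while preserving the strict inequalities.

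\medskip
\textbf{Step 3: completeness — the main obstacle.} Let $(U^{(k)})_k$ be Cauchy. Since $d$ takes only the value $3$ across distinct skeletons, the Cauchy condition forces $K(U^{(k)})$ to be eventually constant, equal to some fixed $K$ with $n$ vertical and $m$ horizontal segments; discard the initial terms so all $U^{(k)}$ lie in $K$. Their coordinate tuples then form a Cauchy sequence in $(-a,a)^n\times(-b,b)^m$ for the $\ell^1$ metric, hence converge in $\RR^{n+m}$ to some limit tuple $(x_1^\infty,\dots;y_1^\infty,\dots)$ with $-a\le x_1^\infty\le\dots\le x_n^\infty\le a$ and $-b\le y_1^\infty\le\dots\le y_m^\infty\le b$. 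The real work is to show the limit tuple still gives an element of $\mathcal{U}$ with skeleton $K$: the coordinates could a priori collide ($x_i^\infty=x_{i+1}^\infty$) or hit the boundary ($x_1^\infty=-a$), either of which would violate the strict inequalities in~\eqref{eq:config} or change the incidence structure. I would address this by noting that the skeleton $K$ is a \emph{fixed} finite combinatorial object: reconstituting the segment set $U^\infty$ from $K$ and the limit tuple, every incidence relation (a vertical segment starting on a given horizontal one, etc.) is preserved in the limit because it is a closed condition, and the ``degenerate'' coincidences are ruled out precisely because they are not present in $K$ and the coordinates of $U^{(k)}$ — which do realize $K$ — converge; more carefully, one checks that the map sending an admissible coordinate tuple to its configuration is a homeomorphism from the (relatively closed in $[-a,a]^n\times[-b,b]^m$, once one observes the constraints coming from $K$ force the extreme coordinates away from collisions) admissible region onto the skeleton class $K\subset\mathcal{U}$, so Cauchy tuples have admissible limits. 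I expect this verification — that the closed admissible coordinate region for a fixed skeleton is genuinely closed in $\RR^{n+m}$, so no escape to a degenerate configuration occurs — to be the one point needing care; everything else is routine. Once $U^\infty\in\mathcal{U}$ with $K(U^\infty)=K$ is established, $d(U^{(k)},U^\infty)\to 0$ is immediate from coordinatewise convergence, proving completeness.
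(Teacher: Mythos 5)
Your Steps 1 and 2 are fine (the paper does not even spell them out), and you correctly identified the delicate point in Step 3 — but your resolution of it is wrong. The admissible coordinate region for a fixed skeleton $K$ is \emph{not} relatively closed in $[-a,a]^n\times[-b,b]^m$: it is defined by the strict inequalities $-a<x_1<\cdots<x_n<a$, $-b<y_1<\cdots<y_m<b$, and nothing in the incidence data of $K$ "forces the extreme coordinates away from collisions". Concretely, take $K$ to be the skeleton of a single vertical segment crossing the whole rectangle and set $U^{(k)}=\{[(-a+\tfrac1k,-b),(-a+\tfrac1k,b)]\}$. Then $d(U^{(k)},U^{(\ell)})=\frac{1}{2a}\left|\tfrac1k-\tfrac1\ell\right|$, so the sequence is Cauchy, yet it has no limit in $\mathcal{U}$: any configuration in a different skeleton class is at distance $3$, and a limit inside $K$ would need abscissa $-a$, which is excluded by the condition $-a<x<a$ in the definition of $\set_V$. (Similarly, two full-height vertical segments can merge in the limit, violating the distinct-abscissa condition in~\eqref{eq:config}.) So $(\mathcal{U},d)$ is in fact \emph{not} a complete metric space, and the homeomorphism-onto-a-closed-region argument you sketch cannot be repaired.

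What remains true — and is what the lemma needs — is that the topology induced by $d$ is Polish, i.e.\ separable and \emph{completely metrizable}, though not by $d$ itself. Each class $K$ is clopen (classes are at mutual distance $3$) and is homeomorphic, via the coordinate map, to an open subset of $\RR^{n(K)+m(K)}$; open subsets of Polish spaces are Polish (standard $G_\delta$ remetrization), and a countable disjoint union of Polish spaces is Polish, so $\mathcal{U}$ is Polish. You should state the argument this way rather than claiming completeness of $d$. For comparison, the paper's own one-line proof asserts that each class is complete for the restricted $L_1$-type distance "because it is a modified version of the $L_1$-distance on a finite dimensional space", which glosses over exactly the openness issue you spotted; your diagnosis of where the difficulty lies is better than the paper's, but your proposed closure claim is false and must be replaced by the remetrization/open-subset argument.
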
  

\begin{proof}
  The construction of the distance $d$ is made for that, if the distance is equal to $3$, then $U$ and $U'$ have different skeleton. On the contrary, if $U$ and $U'$ share the same skeleton, then $d(U,U') \leq 2$.\par
  So to check the completeness of $(\mathcal{U},d)$, we just have to check that, for any $K \in \mathcal{K}$. The space $K$ equipped with the distance
  \begin{equation}
    \ind{n(K) \neq 0} \ \frac{1}{n(K)} \sum_{i=1}^{n(K)} \frac{ \big| x_i - x'_i \big|}{2a} + \ind{m(K) \neq 0} \ \frac{1}{m(K)} \sum_{j=1}^{m(K)} \frac{ \big| y_j - y'_j \big|}{2b}
  \end{equation}
  is complete. This is the case because it is a modified version of the $L_1$-distance on a finite dimensional space.
\end{proof}

\paragraph{A $\sigma$-finite measure on $\mathcal{U}$:}
Let $K$ be any skeleton in $\mathcal{K}$. Take $U \in K$ of coordinate $(x_1,\dots,x_n;y_1,\dots,y_m)$. We define the ``uniform'' finite measure $\lambda_K$ on $K$ by
\begin{equation}
  \di \lambda_K(U) = \di \lambda_{K}(x_1,\dots,x_n;y_1,\dots,y_m) = \ind{-a< x_1<\dots<x_n<a}\, \ind{-b<y_1<\dots<y_m<b}\, \di x_1 \dots \di x_n\, \di y_1 \dots \di y_m.
\end{equation}

The set $\mathcal{K}$ of skeletons is enumerable and can be equipped with a $\sigma$-finite counting measure: for any $A \in \mathcal{P}(\mathcal{K})$, $\delta(A) = \sum_{K \in \mathcal{K}} \delta_K(A)$, where $\delta_K(A) = \ind{K \in A}$.\par
Finally, we define the measure $r$ on the Polish space $\mathcal{U}$ equipped with the distance $d$ by, for any Borel set $B$ of $\mathcal{U}$,
\begin{equation}
  r(B) = \sum_{K \in \mathcal{K}} \lambda_{K}(B \cap K).
\end{equation}

\begin{lemma}
  The measure $r$ is $\sigma$-finite.
\end{lemma}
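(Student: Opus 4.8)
The plan is to exhibit an explicit countable measurable cover of $\mathcal{U}$ by sets of finite $r$-measure, and the skeleton classes themselves are the natural candidates. First I would use the fact, already recorded when the counting measure $\delta$ was introduced, that $\mathcal{K} = \mathcal{U}/\sk$ is enumerable; since the $\sk$-classes partition $\mathcal{U}$, this gives $\mathcal{U} = \bigcup_{K \in \mathcal{K}} K$ as a countable union of pairwise disjoint sets. Next I would check that each $K$ is a Borel subset of $(\mathcal{U},d)$: by the definition of $d$, two configurations with different skeletons are at distance $3$, so for $U \in K$ the open ball $B_d(U,1)$ is entirely contained in $K$; hence $K$ is open, and so is its complement $\bigcup_{K'\neq K}K'$, so $K$ is clopen and in particular Borel (which also makes $r(B) = \sum_K \lambda_K(B\cap K)$ a legitimate expression, $B\cap K$ being Borel for every Borel $B$).

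Then I would compute $r(K)$ for a fixed $K \in \mathcal{K}$. Because $K \cap K' = \emptyset$ for every $K' \neq K$, only the term with $K' = K$ survives in $r(K) = \sum_{K'\in\mathcal{K}} \lambda_{K'}(K \cap K')$, so $r(K) = \lambda_K(K)$. Writing $n = n(K)$ and $m = m(K)$, the coordinate map identifies $K$ with the product of ordered simplices $\{-a<x_1<\dots<x_n<a\}$ and $\{-b<y_1<\dots<y_m<b\}$, and $\lambda_K$ with Lebesgue measure on that product; its total mass is therefore $\frac{(2a)^n}{n!}\cdot\frac{(2b)^m}{m!}$ (with the convention that this equals $1$ when $n=0$ or $m=0$), which is finite.

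Putting the two steps together, $\mathcal{U}$ is the countable union of the Borel sets $K$, $K \in \mathcal{K}$, each of finite $r$-measure, which is precisely the definition of $\sigma$-finiteness. There is essentially no obstacle here: the only genuine computation is the elementary volume of an ordered simplex yielding finiteness of $\lambda_K$, and everything else is the bookkeeping that the skeleton classes are countably many, pairwise disjoint, and clopen.
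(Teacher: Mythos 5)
Your proof is correct and follows essentially the same route as the paper: decompose $\mathcal{U}$ into the countably many skeleton classes and compute $r(K)=\lambda_K(K)=\frac{(2a)^{n(K)}}{n(K)!}\frac{(2b)^{m(K)}}{m(K)!}<\infty$ via the volume of ordered simplices. The only addition is your explicit check that each class is clopen (hence Borel), a measurability detail the paper leaves implicit.
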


\begin{proof}
  The space of configuration $\mathcal{U}$ is $\bigcup_{K \in \mathcal{K}} K$ and $\mathcal{K}$ is enumerable, so it is an enumerable union of spaces. Moreover, any $K$ is $r$-finite, its $r$-measure is
  \begin{align*}
    r(K) = \lambda_K(K) & = \left(\int_{(-a,a)^{n(K)}} \ind{x_1<\dots<x_{n(K)}} \di x_1 \dots \di x_{n(K)} \right) \left(\int_{(-b,b)^{m(K)}} \ind{y_1<\dots<y_{m(K)}} \di y_1 \dots \di y_{m(K)} \right)\\
    & = \frac{(2a)^{n(K)}}{n(K)!}  \frac{(2b)^{m(K)}}{m(K)!} < \infty. \qedhere
  \end{align*}
\end{proof}

\paragraph{\RN{} derivative:}
Let denote by $\tau(\para,N)$ the law of $\mathcal{L}_{[-a,+\infty) \times [-b,+\infty)]}(\para,N)|_{[-a,a] \times [-b,b]}$, see Section~\ref{sec:st} for a recall of this notation.\par
If there exists $(\nu_H,\nu_V) \in [0,\infty)^2$ such that $N = N_{\PPP(\nu_H,\nu_V)}$, then $\tau(\para,N_{\PPP(\nu_H,\nu_V)})$ is absolutely continuous according to the $\sigma$-finite measure~$r$, so we can define its \RN{} derivative $\frac{\di \tau(\para,N_{\PPP(\nu_H,\nu_V)})}{\di r}$ and, moreover, it is explicit:
\begin{lemma} \label{lem:density}
  Take a ballistic model with parameter $\para$ and any counting measure $N_{\PPP(\nu_H,\nu_V)}$.\par
  Then, the law $\tau(\para,N_{\PPP(\nu_H,\nu_V)})$ is absolutely continuous according to the measure $r$ and its \RN{} derivative is
  \begin{align}
    \frac{\di \tau(\para,N_{\PPP{(\nu_H,\nu_V)}})}{\di r}(U) & = \sum_{K \in \mathcal{K}} \ind{U \in K} \, \nu_V^{\tVE(U)} \nu_H^{\tHE(U)} e^{-2 \nu_V a} e^{-2 \nu_H b} e^{-(\lambda_V + \tau_V) L_V(U)} e^{-(\lambda_H + \tau_H) L_H(U)} e^{-4 \lambda_0 ab} \nonumber \\
    & \qquad \lambda_0^{\tOB(U)} \lambda_H^{\tVB(U)} \lambda_V^{\tHB(U)} \tau_H^{\tVT(U)} \tau_V^{\tHT(U)} p_V^{\tHA(U)} p_H^{\tVA(U)} p_0^{\tOA(U)} (1-p_V-p_H-p_0)^{\tCC(U)} \label{eq:density}
  \end{align}
  where $L_V(U)$ and $L_H(U)$ are the total lengths of the vertical and horizontal segments of $U$ that are
  \begin{equation}
    L_V(U) = \sum_{[(x,y),(x,y')] \in U \cap \set_V} (y'-y) \text{ and } L_H(U) = \sum_{[(x,y),(x',y)] \in U \cap \set_H} (x'-x).
  \end{equation}
\end{lemma}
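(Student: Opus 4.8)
The plan is to build the density by conditioning on the skeleton and then computing, for a configuration $U$ lying in a fixed skeleton class $K$, the joint law of its coordinates $(x_1,\dots,x_n;y_1,\dots,y_m)$ against the reference measure $\lambda_K$. Since $r = \sum_{K} \lambda_K(\,\cdot\, \cap K)$, it suffices to show that, restricted to each $K \in \mathcal{K}$, the law $\tau(\para,N_{\PPP(\nu_H,\nu_V)})$ has $\lambda_K$-density equal to the $K$-th summand of \eqref{eq:density}. The random set $\mathcal{L}_{[-a,+\infty)\times[-b,+\infty)}(\para,N)|_{[-a,a]\times[-b,b]}$ is generated by finitely many Poisson ingredients inside the rectangle: the entry points on the left and bottom edges (two independent PPPs with intensities $\nu_H\,\di y$ on $\{-a\}\times(-b,b)$ and $\nu_V\,\di x\,$ on $(-a,a)\times\{-b\}$, coming from $N_{\PPP(\nu_H,\nu_V)}$), the ex-nihilo creation PPP $\Xi_0$ with intensity $\lambda_0\,\di x\,\di y$, and, along each horizontal (resp.\ vertical) trajectory, independent Poisson clocks of rate $\lambda_H,\tau_H$ (resp.\ $\lambda_V,\tau_V$) for splits and turns, together with i.i.d.\ marks at each intersection with law $(p_V,p_H,p_0,1-p_V-p_H-p_0)$. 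Because the \Bullet model is well-defined (no accumulation point, by Proposition 1), with probability one a realisation is a finite element of $\mathcal{U}$, so this description is exhaustive.

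The key steps are as follows. First I would fix $K$ and set up a bijection between configurations $U \in K$ and the data $(n,m;x_1<\dots<x_n;y_1<\dots<y_m)$ of entry/interior abscissas and ordinates, checking that the thirteen statistics $\tVE,\dots,\tCC$ and the lengths $L_V,L_H$ are determined by $K$ together with these coordinates (they are, since $K$ fixes the combinatorial incidence pattern, and the lengths are affine functions of the coordinates — this is exactly what the ``shared statistics'' paragraph is for). Second, I would compute the Radon--Nikodym density by a direct Poissonian likelihood computation: the probability to see a particular finite pattern of Poisson points in a region of measure $\mu$ contributes, infinitesimally, a factor (intensity)$^{\#\text{points}}\cdot e^{-\mu}$, and conditional on the points seen, each independent clock along a segment of length $\ell$ with rate $\kappa$ contributes $\kappa^{\#\text{events on it}} e^{-\kappa\ell}$, while each intersection mark contributes its probability. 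Collecting: the entries give $\nu_V^{\tVE}\nu_H^{\tHE}e^{-2\nu_V a}e^{-2\nu_H b}$ (the exponentials being the void probabilities on the two edges of lengths $2a$, $2b$); the ex-nihilo points give $\lambda_0^{\tOB}e^{-4\lambda_0 ab}$; the split/turn clocks along horizontal segments give $\lambda_H^{\tVB}\tau_H^{\tVT}e^{-(\lambda_H+\tau_H)L_H}$ and along vertical segments $\lambda_V^{\tHB}\tau_V^{\tHT}e^{-(\lambda_V+\tau_V)L_V}$ — here one must note that a horizontal split/turn event is precisely the ``first coordinate'' of a $\tVB$- or $\tVT$-point, and that the total horizontal length $L_H$ already accounts for all no-event stretches including those terminated by a coalescence or crossing; and the intersection marks give $p_V^{\tHA}p_H^{\tVA}p_0^{\tOA}(1-p_V-p_H-p_0)^{\tCC}$. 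Third, I would verify that the combined factor is constant on $K$ in the directions parametrised by $x_i,y_j$ up to the exponential length terms — but the length terms $e^{-(\lambda_V+\tau_V)L_V(U)-(\lambda_H+\tau_H)L_H(U)}$ are genuinely coordinate-dependent and are kept inside the density — so that, after integrating the underlying independent exponentials/Poisson structure, the resulting law on coordinates has exactly $\lambda_K$-density \eqref{eq:density}, up to the indicator $\ind{-a<x_1<\dots<x_n<a}\ind{-b<y_1<\dots<y_m<b}$ already built into $\lambda_K$.

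The main obstacle is bookkeeping rather than conceptual: one must argue carefully that the thirteen point-types exhaust every ``event'' contributing a combinatorial factor, that no event is double-counted (e.g.\ a turn of a horizontal particle at $(x,y)$ is counted once as a $\tVT$-point and not also as the creation-endpoint of some vertical segment), and that the continuous exponential factors assemble correctly into $e^{-(\lambda_V+\tau_V)L_V}e^{-(\lambda_H+\tau_H)L_H}$ — in particular, that the ``competition'' between the split clock (rate $\lambda$) and the turn clock (rate $\tau$) along a trajectory of length $\ell$ yields the product $e^{-(\lambda+\tau)\ell}$ times the rate of whichever event actually occurred at its right endpoint, with the memoryless property ensuring independence across successive segments of the same particle. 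This is essentially the same computation as in \cite[Sections~5.1--5.2]{BCES23} for Poisson--Kirchhoff systems specialised to intensity set $\{0\}$, so I would invoke that structure while spelling out the specialisation; absolute continuity w.r.t.\ $r$ then follows since the law of the coordinates in each $K$ is, by construction, absolutely continuous w.r.t.\ Lebesgue measure on the simplex, which is $\lambda_K$.
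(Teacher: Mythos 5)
Your proposal is correct and follows essentially the same route as the paper's proof: a term-by-term Poissonian likelihood computation (entry PPPs, ex-nihilo PPP, competing exponential split/turn clocks giving the $e^{-(\lambda+\tau)\ell}$ factors, independent collision marks), with the skeleton decomposition and the ordered reference measure $\lambda_K$ absorbing the factorials, and with the same appeal to the structure of~\cite[Lemma~5.2]{BCES23}. No gaps to flag.
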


\begin{remark}
  Previous lemma can be improved considering a larger class of initial conditions. But, as it requires more formalisms about random point processes on $\RR$ to describe it well, we prefer to state it only for the $N_{\PPP(\nu_H,\nu_V)}$ initial conditions as it is sufficient for this article.
\end{remark}

\begin{proof}
  The proof is similar to the one of~\cite[Lemma~5.2]{BCES23}.\par
  The Equation~\eqref{eq:density} is just a rearrangement of a product of terms where each of them represented the probability of a local event and where all the events are independent. Let us analyse each term separately.\par
  The first term $\nu_V^{\tVE(U)} \nu_H^{\tHE(U)} e^{-2 \nu_V a} e^{-2 \nu_H b}$ is the probability under $N_{\PPP(\nu_H,\nu_V)}$ that we get exactly $\VE(U)$ vertical ordered particles and $\HE(U)$ horizontal ordered particles.\par
  The term $\lambda_0^{\tOB(U)} e^{-4 \lambda_0 ab}$ is the probability for a PPP with intensity $\lambda_0$ to get $\OB(U)$ points on the rectangle $[-a,a] \times [-b,b]$.\par 
  Here, we should ask why $\frac{1}{n!}$ or $\frac{1}{m!}$ or $\frac{1}{\OB(U)!}$ terms do not appear. This is due to the fact that in our reference measure $r$, the ordinates and abscissas are ordered. And so, when the integration is made according to the reference measure $r$, the factorials appear back.\par
  \medskip
  Now, for a vertical particle, two events can occur. Either it splits or turn at rate $\tau_V + \lambda_V$ and, when this happens, with probability $\frac{\tau_V}{\tau_V + \lambda_V}$, it is a turn and, with probability $\frac{\lambda_V}{\tau_V + \lambda_V}$, it is a split. This explains the two terms $\lambda_V^{\tHB(U)}$ and $\tau_V^{\tHT(U)}$, and, similarly considering horizontal particles, the two terms $\lambda_H^{\tVB(U)}$ and $\tau_H^{\tVT(U)}$. Either it encounters a horizontal particle, in that case, four issues can happen with probabilities $p_V$, $p_H$, $p_0$ and $1-p_V-p_H-p_0$. That induces the term $p_V^{\tHA(U)} p_H^{\tVA(U)} p_0^{\tOA(U)} (1-p_V-p_H-p_0)^{\tCC(U)}$.\par
  Moreover, between two splits, turns or encounters, a term $e^{-(\lambda_V + \tau_V) d}$ appears, where $d$ is the distance between the two events. This term is the probability for the vertical particle to do not split or turn during a distance $d$. All these contributions, multiplied together, give the term $e^{-(\lambda_V + \tau_V) L_V(U)}$ and its horizontal counterpart $e^{-(\lambda_H + \tau_H) L_H(U)}$.
\end{proof}

\subsection{Proof of Theorem~\ref{thm:pi} and Corollary~\ref{cor:pi}} \label{sec:proof-pi}
We begin this section with the proof of Theorem~\ref{thm:pi} and then we do the one of Corollary~\ref{cor:pi}.
\begin{proof}[Proof of Theorem~\ref{thm:pi}]
Finally, to prove Theorem~\ref{thm:pi}, we just need to check that, for any $a,b > 0$, the density of $\tau(\para,N_{\PPP(\nu_H,\nu_V)})$ that is, for any $U \in \mathcal{U}$,
\begin{align} \label{eq:densU}
  & \sum_{K \in \mathcal{K}} \ind{U \in K}\, \nu_V^{\tVE(U)} \nu_H^{\tHE(U)} e^{-2 \nu_V a} e^{-2 \nu_H b} e^{-(\lambda_V + \tau_V) L_V(U)} e^{-(\lambda_H + \tau_H) L_H(U)} e^{-4 \lambda_0 ab} \nonumber \\
  & \qquad \lambda_0^{\tOB(U)} \lambda_H^{\tVB(U)} \lambda_V^{\tHB(U)} \tau_H^{\tVT(U)} \tau_V^{\tHT(U)} p_V^{\tHA(U)} p_H^{\tVA(U)} p_0^{\tOA(U)} (1-p_V-p_H-p_0)^{\tCC(U)}
\end{align}
is equal to the one of $\tau(\tilde{\para},N_{\PPP(\nu_H,\nu_V)})$ taken in $s_\pi(U) = \tilde{U}$ that is
\begin{align} \label{eq:densTU}
  & \sum_{K \in \mathcal{K}} \ind{s_\pi(U) \in K}\, \nu_V^{\tVE(\tilde{U})} \nu_H^{\tHE(\tilde{U})} e^{-2 \nu_V a} e^{-2 \nu_H b} e^{-(\tilde{\lambda}_V + \tilde{\tau}_V) L_V(\tilde{U})} e^{-(\tilde{\lambda}_H + \tilde{\tau}_H) L_H(\tilde{U})} e^{-4 \tilde{\lambda}_0 ab} \nonumber \\
  & \qquad \tilde{\lambda}_0^{\tOB(\tilde{U})} \tilde{\lambda}_H^{\tVB(\tilde{U})} \tilde{\lambda}_V^{\tHB(\tilde{U})} \tilde{\tau}_H^{\tVT(\tilde{U})} \tilde{\tau}_V^{\tHT(\tilde{U})} \tilde{p}_V^{\tHA(\tilde{U})} \tilde{p}_H^{\tVA(\tilde{U})} \tilde{p}_0^{\tOA(\tilde{U})} (1-\tilde{p}_V-\tilde{p}_H-\tilde{p}_0)^{\tCC(\tilde{U})}
\end{align}
under the five conditions given in Theorem~\ref{thm:pi}.\par

The rotation $s_\pi$ transforms points of $\tilde{U}$ into points of $U$ with a correspondence between types, see Table~\ref{tab:type} for the correspondence. For example, this implies that $\VT(\tilde{U}) = \HT(U)$.
\begin{table}
  \begin{center}
    \begin{tabular}{|c||c|c|c|c|c|c|c|c|c|c|c|}
      \hline
      $s_{\pi}(U)$ & \VE & \HE & \OB & \VB & \HB & \VT & \HT & \HA & \VA & \OA & \CC \\
      \hline
      $U$         & \VS & \HS & \OA & \VA & \HA & \HT & \VT & \HB & \VB & \OB & \CC \\
      \hline
    \end{tabular}  
    \caption{Correspondence between type of points in $s_\pi(U)$ and in $U$. For instance, a point that is a creation of a vertical particle by a horizontal one (\VB) in $s_\pi(U)$ corresponds to a kill of a vertical particle by a horizontal one that survives (\VA) in $U$.}%
    \label{tab:type}
  \end{center}
\end{table}

The skeleton relation is compatible with the rotation $s_\pi$, in the sense that $K(s_\pi(U)) = s_\pi(K(U))$, and so $\ind{U \in K} = \ind{s_\pi(U) \in s_\pi(K)}$.\par

The rotation $s_\pi$ transforms a vertical segment $[(x,y),(x',y)]$ of length $x'-x$ into the vertical one $[(-x',-y),(-x,-y)]$ of the same length. Hence, the total length of vertical segments $L_V$ is the same in $U$ and in $s_\pi(U) = \tilde{U}$, similarly for the horizontal ones:
\begin{lemma} \label{lem:lenght}
  $L_V(\tilde{U})= L_V(U)$ and $L_H(\tilde{U}) = L_H(U)$.
\end{lemma}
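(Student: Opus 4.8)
The plan is to unwind the definition of $s_\pi$ as the central symmetry $(x,y)\mapsto(-x,-y)$ of $\RR^2$ and to observe that this map preserves the \emph{direction} of axis-parallel segments. Concretely, I would first check that $s_\pi$ sends a vertical segment $[(x,y),(x,y+t)]\in\set_V$ (so $t>0$) to $[(-x,-y-t),(-x,-y)]$, which again lies in $\set_V$ and has the same length $t$; symmetrically it sends a horizontal segment of length $t$ to a horizontal segment of length $t$. This is immediate from the coordinate formula, once one notes that $-a<x<a \iff -a<-x<a$ and $-b\le y<y+t\le b \iff -b\le -y-t<-y\le b$.

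Next I would record that, since $s_\pi$ is an involution of $\RR^2$ that carries $\set_V$ onto $\set_V$ and $\set_H$ onto $\set_H$, and since $\tilde U=s_\pi(U)$ by definition, the assignment $S\mapsto s_\pi(S)$ is a bijection from $U\cap\set_V$ onto $\tilde U\cap\set_V$, and likewise from $U\cap\set_H$ onto $\tilde U\cap\set_H$, preserving the length of each segment. (That $\tilde U$ is again a legitimate element of $\mathcal{U}$ has already been used above, through $K(s_\pi(U))=s_\pi(K(U))$, so I would not re-establish it.)

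The conclusion is then a one-line computation:
\begin{displaymath}
  L_V(\tilde U)=\sum_{[(x,y),(x,y')]\in \tilde U\cap\set_V}(y'-y)=\sum_{[(x,y),(x,y')]\in U\cap\set_V}(y'-y)=L_V(U),
\end{displaymath}
where the middle equality is the change of variables induced by the length-preserving bijection $s_\pi$; the identical argument with $\set_H$ gives $L_H(\tilde U)=L_H(U)$.

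I do not anticipate any genuine obstacle here. The one point deserving a word of care is the bookkeeping of orientation: unlike $s_{\pi/2}$ or the reflection $r$, the symmetry $s_\pi$ does \emph{not} interchange the vertical and horizontal families, which is precisely why $L_V$ (resp.\ $L_H$) is matched with $L_V$ (resp.\ $L_H$) rather than crossed over. An equivalent way to organise the proof would be to invoke that $s_\pi$ is a Euclidean isometry stabilising each of the two pencils of axis-parallel lines, but this amounts to the same elementary verification.
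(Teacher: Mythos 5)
Your proof is correct and follows essentially the same route as the paper, which simply notes that $s_\pi$ maps each vertical (resp.\ horizontal) segment to a vertical (resp.\ horizontal) segment of the same length and concludes by summing. Your extra bookkeeping (the bijection $S\mapsto s_\pi(S)$ and the remark that $s_\pi$ does not swap the two families, unlike $s_{\pi/2}$ or $r$) is just a more explicit write-up of the same one-line argument.
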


By Table~\ref{tab:type}, Lemma~\ref{lem:lenght} and the five conditions given in Theorem~\ref{thm:pi}, the density given by Equation~\eqref{eq:densTU} rewrites 
\begin{align}
  & \nu_V^{\tVS(U)} \nu_H^{\tHS(U)} e^{-2 \nu_V a} e^{-2 \nu_H b} e^{-(\lambda_V + \tau_V) L_V(U)} e^{-(\lambda_H + \tau_H) L_H(U)} e^{-4 \lambda_0 ab} (\nu_V \nu_H p_0)^{\tOA(U)} (\nu_V p_H)^{\tVA(U)}  \nonumber \\
  & \qquad (\nu_H p_V)^{\tHA(U)} \left(\frac{\nu_V \tau_V}{\nu_H}\right)^{\tHT(U)} \left(\frac{\nu_H \tau_H}{\nu_V}\right)^{\tVT(U)} \left(\frac{\lambda_V}{\nu_H} \right)^{\tHB(U)} \left(\frac{\lambda_H}{\nu_V} \right)^{\tVB(U)} \left(\frac{\lambda_0}{\nu_V \nu_H}\right)^{\tOB(U)} (1-p_V-p_H-p_0)^{\tCC(U)}.
\end{align}

Comparing it with the density given by Equation~\eqref{eq:densU}, we need to check that the following equation holds:
\begin{equation}
  \nu_V^{\tVE} \nu_H^{\tHE} = \nu_V^{\tVS+\tOA+\tVA+\tHT-\tVT-\tVB-\tOB} \nu_H^{\tHS+\tOA+\tHA-\tHT+\tVT-\tHB-\tOB} 
\end{equation}
we dropped $(U)$ here to shorten the equation. That is equivalent to
\begin{equation}
  \nu_V^{\tVE+\tVT+\tVB+\tOB} \nu_H^{\tHE+\tHT+\tHB+\tOB} = \nu_V^{\tVS+\tOA+\tVA+\tHT} \nu_H^{\tHS+\tOA+\tHA+\tVT}. 
\end{equation}

Finally, to end the proof, we use the following lemma:
\begin{lemma} \label{lem:sepoints}
  For any configuration $U \in \mathcal{U}$ (or any skeleton $K \in \mathcal{K}$),
  \begin{align*}
    & \HE + \HT + \HB + \OB = \HS + \OA + \HA + \VT = m \text{ and}\\
    & \VE + \VT + \VB + \OB = \VS + \OA + \VA + \HT = n.
  \end{align*}
\end{lemma}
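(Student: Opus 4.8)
The plan is to prove the four equalities by counting the vertical segments of $U$ in two different ways (for the two identities whose right-hand side is $n$) and, symmetrically, the horizontal segments in two ways (for the two whose right-hand side is $m$); in each case the classification is by the local picture at an endpoint of a segment.

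First I would record two immediate consequences of Equation~\eqref{eq:config}: since distinct vertical, resp.\ horizontal, segments of $U$ have distinct abscissas, resp.\ ordinates, the integer $n = |U \cap \set_V|$ equals the number of vertical segments and $m = |U \cap \set_H|$ the number of horizontal ones, and at every abscissa there is at most one vertical segment and at every ordinate at most one horizontal segment. Then, to obtain $\VE + \VT + \VB + \OB = n$, I would run through the $n$ vertical segments $S = [(x,y_0),(x,y_1)]$ and examine the bottom endpoint $(x,y_0)$. By the first clause of Equation~\eqref{eq:config}, either $y_0 = -b$, in which case $S$ is the unique vertical segment counted by $\VE$ at abscissa $x$; or there is a horizontal segment through $(x,y_0)$, which is then unique and, having positive length, meets $(x,y_0)$ in its interior, at its right endpoint, or at its left endpoint. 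Matching these three local configurations against the defining set-conditions shows that $(x,y_0)$ is then enumerated respectively by $\VB$, by $\VT$, or by $\OB$, and by no other type. As $y_0 = -b$ excludes the existence of a horizontal segment at ordinate $y_0$ (horizontal segments have ordinate in $(-b,b)$), each vertical segment falls into exactly one of the four classes, and conversely each point enumerated by $\VE$, $\VB$, $\VT$ or $\OB$ is the bottom endpoint of a unique vertical segment; hence $n = \VE + \VB + \VT + \OB$. The same bookkeeping applied to the top endpoint $(x,y_1)$ — on the top edge, giving $\VS$; or in the interior, at the left endpoint, or at the right endpoint of a unique horizontal segment, giving $\VA$, $\HT$, $\OA$ respectively — yields $n = \VS + \VA + \HT + \OA$. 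The two identities with right-hand side $m$ follow in the same way, classifying each horizontal segment by its left endpoint (left edge, or interior/top/bottom of a unique vertical segment: $\HE$, $\HB$, $\HT$, $\OB$) and then by its right endpoint ($\HS$, $\HA$, $\OA$, $\VT$); alternatively they can be deduced from the vertical ones by the diagonal reflection $r$ of Proposition~\ref{prop:r}, which interchanges vertical and horizontal segments and permutes the thirteen point types accordingly.

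The only delicate point will be this bookkeeping: for each of the four positions of a segment's endpoint one must verify that the resulting local picture satisfies exactly one of the thirteen conditions defining the point types (for instance, that the bottom endpoint of a vertical segment which is also the right endpoint of a horizontal segment contributes to $\VT$ and to nothing else), and that the four cases are exhaustive and pairwise exclusive. This is precisely where the full strength of Equation~\eqref{eq:config} is used: that an endpoint of a segment lying in the open rectangle must sit on a transverse segment, that segments have strictly positive length, and that there is at most one vertical, resp.\ horizontal, segment per abscissa, resp.\ ordinate. Finally, since each of these counts depends on $U$ only through $K(U)$, the statement for skeletons $K \in \mathcal{K}$ is just a restatement of the one for configurations.
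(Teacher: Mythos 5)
Your proposal is correct and takes essentially the same route as the paper: the paper's proof is exactly the observation that every segment has a beginning and an end, with $\HE+\HT+\HB+\OB$ counting beginnings and $\HS+\OA+\HA+\VT$ counting endings of horizontal segments (and symmetrically for vertical ones). You simply carry out in detail the endpoint case analysis that the paper leaves implicit, and your type assignments match the paper's definitions.
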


\begin{proof}
  Every segment has a beginning and an end. The number $\HE + \HT + \HB + \OB$ is the number of beginnings of a horizontal segment and the number $\HS + \OA + \HA + \VT$ is the number of endings.
\end{proof}

This lemma ends the first part of the proof. Hence, for any $a,b > 0$, we prove that
\begin{displaymath}
  \mathcal{L}_{[-a,\infty) \times [-b,\infty)}(\para,N_{\PPP(\nu_H,\nu_V)})|_{[-a,a]\times[-b,b]} \eqd s_\pi\left(\mathcal{L}_{[-a,\infty) \times [-b,\infty)}(\tilde{\para},N_{\PPP(\nu_H,\nu_V)})|_{[-a,a]\times[-b,b]} \right).
\end{displaymath}
By the translation invariance of the model, this is true for any rectangle considering the rotation by an angle $\pi$ and whose centre of rotation is the centre of the rectangle, not just the rectangle $[-a,a] \times [-b,b]$. 

The last point we need to prove now is that $N_{\PPP(\nu_H,\nu_V)}$ is stationary for the \Bullet model with parameter $\para$. Let's suppose that the $x$-and $y$-axes are distributed according to $N_{\PPP(\nu_H,\nu_V)}$ and from this initial condition, we apply a \Bullet model with parameter $\para$. We want to prove that, for almost any $a,b,c,d > 0$,
\begin{displaymath}
  \mathcal{L}_{[0,\infty)^2}(\para,N_{\PPP(\nu_H,\nu_V)})|_{[a,a+c]\times[b,b+d]} \eqd \mathcal{L}_{[a,\infty) \times [b,\infty)}(\para,N_{\PPP(\nu_H,\nu_V)})|_{[a,a+c]\times[b,b+d]}.
\end{displaymath}
For that, consider the rectangle $[0,a+c] \times [0,b+d]$, by the first result of the proof, the space-time diagram inside this rectangle is the $s_\pi$-reverse one of the \Bullet model with parameter $\tilde{\para}$ with initial condition $N_{\PPP(\nu_H,\nu_V)}$, and so
\begin{displaymath}
  \mathcal{L}_{[0,\infty)^2}(\para,N_{\PPP(\nu_H,\nu_V)})|_{[a,a+c]\times[b,b+d]} \eqd s_\pi \left( \mathcal{L}_{[-a-c,\infty) \times [-b-d,\infty)}(\tilde{\para},N_{\PPP(\nu_H,\nu_V)})|_{[-a-c,-a]\times[-b-d,-b]} \right).
\end{displaymath}

Now, consider the rectangle $[a,a+c] \times [b,b+d]$, once again, we get that the space-time diagram inside it is the $s_\pi$-reverse one of the \Bullet model with parameter $\tilde{\para}$ with initial condition $N_{\PPP(\nu_H,\nu_V)}$, that is exactly the one of a \Bullet model with parameter $\para$ and initial condition $N_{\PPP(\nu_H,\nu_V)}$, that is
\begin{displaymath}
s_\pi\left(\mathcal{L}_{[-a-c,\infty) \times [-b-d,\infty)}(\tilde{\para},N_{\PPP(\nu_H,\nu_V)})|_{[-a-c,-a]\times[-b-d,-b]}\right) \eqd \mathcal{L}_{[a,\infty) \times [b,\infty)}(\para,N_{\PPP(\nu_H,\nu_V)})|_{[a,a+c]\times[b,b+d]}.
\end{displaymath}

This is illustrated on Figure~\ref{fig:impi}.
\end{proof}

\begin{figure}
  \begin{center}
    \begin{tabular}{ccc}
      \includegraphics{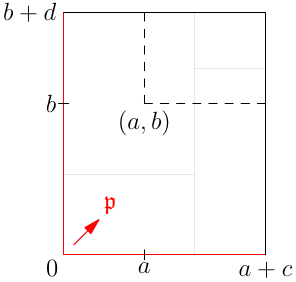} & \includegraphics{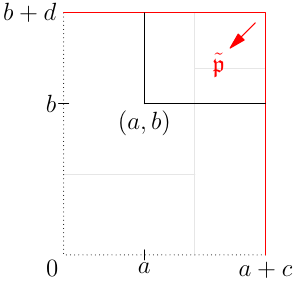} & \includegraphics{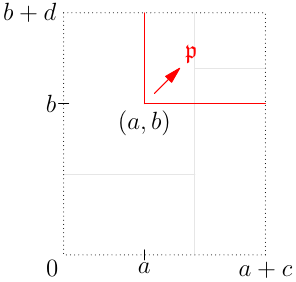}
    \end{tabular}
  \end{center}
  \caption{On the left, the initial situation: a realisation (in grey) of the \Bullet model with parameter $\para$ and initial condition $N_{\PPP(\nu_H,\nu_V)}$. On the centre, by the first part of the proof applied to the rectangle $[0,a+c] \times [0,b+d]$: the rotation by angle $\pi$ of its realisation is the one of a \Bullet model with parameter $\tilde{\para}$ under initial condition $N_{\PPP(\nu_H,\nu_V)}$. On the right, once again by the first part of the proof applied to the rectangle $[a,a+c] \times [b,b+d]$: the realisation restrained to the smaller rectangle is the one of a \Bullet model $\para$ with initial condition $N_{\PPP(\nu_H,\nu_V)}$.}%
  \label{fig:impi}
\end{figure}

\begin{proof}[Proof of Corollary~\ref{cor:pi}]
  Consider a \Bullet model with parameter $\para$ such that $A,B_V,B_H$, defined in Equations~\eqref{eq:A},~\eqref{eq:BV} and~\eqref{eq:BH}, are all not zero, and such that $B_H B_V \lambda_ 0 = A^2 p_0 \tau_V$. Then, we take $\nu_H = A/B_H$, $\nu_V = A/B_V$ and $\tilde{\para}$ defined by:
  \begin{itemize}
  \item $\tilde{\lambda}_0 = \lambda_0$, $\tilde{\lambda}_V = \frac{A}{B_H} p_V$ and $\tilde{\lambda}_H = \frac{A}{B_V} p_H$,
  \item $\tilde{\tau}_V = \frac{B_V}{B_H} \tau_H$ and $\tilde{\tau}_H = \frac{B_H}{B_V} \tau_V$,
  \item $\tilde{p}_V = \frac{B_H}{A} \lambda_V$, $\tilde{p}_H = \frac{B_V}{A} \lambda_H$ and $\tilde{p}_0 = p_0$.
  \end{itemize}

  Let us check now that the five conditions of Theorem~\ref{thm:pi} are satisfied.
  \begin{enumerate}
  \item $\tilde{\lambda}_H + \tilde{\tau}_H = \lambda_H + \tau_H$?
    \begin{align*}
      \tilde{\lambda}_H + \tilde{\tau}_H & = \frac{A}{B_V} p_H + \frac{B_H}{B_V} \tau_V\\
                                         & = \frac{A p_H + B_H \tau_V}{B_V}\\
                                         & = \frac{(\lambda_H + \tau_H) (\lambda_V + \tau_V) p_H - \tau_V \tau_H p_H + (p_H+p_V)(\lambda_H + \tau_H) \tau_V - p_H \lambda_H \tau_V}{B_V} \\
                                         & = \frac{(\lambda_H + \tau_H)[(\lambda_V + \tau_V) p_H + (p_H+p_V) \tau_V - \tau_V p_H]}{B_V} = (\lambda_H+\tau_H) \frac{B_V}{B_V}.
    \end{align*}
    Similarly, $\tilde{\lambda}_V + \tilde{\tau}_V = \lambda_V + \tau_V$.
  \item $\tilde{\lambda}_0 = \lambda_0 = \frac{A^2}{B_H B_V} p_0 = \nu_H \nu_V p_0$, $\tilde{\lambda}_V = \frac{A}{B_H} p_V = \nu_H p_V$ and, similarly, $\tilde{\lambda}_H = \frac{A}{B_V} p_H = \nu_V p_H$.
  \item $\nu_V \tilde{\tau}_V = \nu_V \frac{B_V}{B_H} \tau_H = \nu_V \frac{\nu_H}{\nu_V} \tau_H$ and, similarly, $\nu_H \tilde{\tau}_H = \nu_V \tau_V$.
  \item $\nu_H \tilde{p}_V  = \frac{A}{B_H} \frac{B_H}{A} \lambda_V$ and, similarly, $\nu_V \tilde{p}_H = \lambda_H$.
  \item $\tilde{p}_V + \tilde{p}_H + \tilde{p}_0 = p_V + p_H + p_0$?
    \begin{align*}
      \tilde{p}_H + \tilde{p}_V + \tilde{p}_0 & = \frac{B_V}{A} \lambda_H + \frac{B_H}{A} \lambda_V + p_0\\
                                              & = \frac{B_V \lambda_H + B_H \lambda_V}{A} + p_0\\
                                              & = \frac{(p_H+p_V)(\lambda_V + \tau_V) \lambda_H - p_V \lambda_V \lambda_H + (p_H+p_V)(\lambda_H + \tau_H) \lambda_V - p_H \lambda_V \lambda_H }{A} + p_0\\
                                              & = \frac{(p_H+p_V)[(\lambda_V + \tau_V) \lambda_H + (\lambda_H + \tau_H) \lambda_V - \lambda_V \lambda_H]}{A} + p_0\\
                                              & = (p_H+p_V) \frac{A}{A} + p_0.
    \end{align*}
  \end{enumerate}
  Hence, the five conditions of Theorem~\ref{thm:pi} are satisfied. That's end the proof of the corollary.
\end{proof}

\subsection{Proof of Theorem~\ref{thm:pi2} and Corollary~\ref{cor:pi2}} \label{sec:proof-pi2}
We begin this section with the proof of Theorem~\ref{thm:pi2} and then we do the one of Corollary~\ref{cor:pi2}.
\begin{proof}[Proof of Theorem~\ref{thm:pi2}]
The proof of Theorem~\ref{thm:pi2} is very similar to the one of Theorem~\ref{thm:pi}. We need to check that, for any $a,b > 0$, for any $\nu_H \geq 0$, the density of $\tau(\para,N_{\PPP(\nu_H,\nu_V)})$ that is, for any $U \in \mathcal{U}$,
\begin{align} \label{eq:densU2}
  & \sum_{K \in \mathcal{K}} \ind{U \in K}\, \nu_V^{\tVE(U)} \nu_H^{\tHE(U)} e^{-2 \nu_V a} e^{-2 \nu_H b} e^{-(\lambda_V + \tau_V) L_V(U)} e^{-(\lambda_H + \tau_H) L_H(U)} e^{-4 \lambda_0 ab} \nonumber \\
  & \qquad \lambda_0^{\tOB(U)} \lambda_H^{\tVB(U)} \lambda_V^{\tHB(U)} \tau_H^{\tVT(U)} \tau_V^{\tHT(U)} p_V^{\tHA(U)} p_H^{\tVA(U)} p_0^{\tOA(U)} (1-p_V-p_H-p_0)^{\tCC(U)}
\end{align}
is equal to the one of $\tau(\tilde{\para},N_{\PPP(\nu_V,\nu_H)})$ taken in $s_{\pi/2}(U) = \tilde{U}$ that is
\begin{align} \label{eq:densTU2}
  & \sum_{K \in \mathcal{K}} \ind{s_{\pi/2}(U) \in K}\, \nu_H^{\tVE(\tilde{U})} \nu_V^{\tHE(\tilde{U})} e^{-2 \nu_H b} e^{-2 \nu_V a} e^{-(\tilde{\lambda}_V + \tilde{\tau}_V) L_V(\tilde{U})} e^{-(\tilde{\lambda}_H + \tilde{\tau}_H) L_H(\tilde{U})} e^{-4 \tilde{\lambda}_0 ba} \nonumber \\
  & \qquad \tilde{\lambda}_0^{\tOB(\tilde{U})} \tilde{\lambda}_H^{\tVB(\tilde{U})} \tilde{\lambda}_V^{\tHB(\tilde{U})} \tilde{\tau}_H^{\tVT(\tilde{U})} \tilde{\tau}_V^{\tHT(\tilde{U})} \tilde{p}_V^{\tHA(\tilde{U})} \tilde{p}_H^{\tVA(\tilde{U})} \tilde{p}_0^{\tOA(\tilde{U})} (1-\tilde{p}_V-\tilde{p}_H-\tilde{p}_0)^{\tCC(\tilde{U})}.
\end{align}

The rotation $s_{\pi/2}$ transforms points of $\tilde{U}$ into points of $U$ with a correspondence between types, see Table~\ref{tab:type2} for the correspondence. For example, this implies that $\HT(\tilde{U}) = \OA(U)$.
\begin{table}
  \begin{center}
    \begin{tabular}{|c||c|c|c|c|c|c|c|c|c|c|c|}
      \hline
      $s_{\pi/2}(U)$ & \VE & \HE & \OB & \VB & \HB & \VT & \HT & \HA & \VA & \OA & \CC \\
      \hline
      $U$           & \HE & \VS & \HT & \HB & \VA & \OB & \OA & \VB & \HA & \VT & \CC \\
      \hline
    \end{tabular}  
    \caption{Correspondence between type of points in $s_{\pi/2}(U)$ and in $U$. For instance, a point that is a creation of a vertical particle by a horizontal one (\VB) in $s_{\pi/2}(U)$ corresponds to a creation of a horizontal particle by a vertical one (\HB) in $U$.}%
    \label{tab:type2}
  \end{center}
\end{table}

The skeleton relation is compatible with the rotation $s_{\pi/2}$, in the sense that $K(s_{\pi/2}(U)) = s_{\pi/2}(K(U))$, and so $\ind{U \in K} = \ind{s_{\pi/2}(U) \in s_{\pi/2}(K)}$.\par

The rotation $s_{\pi/2}$ transforms a vertical segment of a certain length into a horizontal one of the same length. Hence,
\begin{lemma} \label{lem:length2}
  $L_V(\tilde{U})= L_H(U)$ and $L_H(\tilde{U}) = L_V(U)$.
\end{lemma}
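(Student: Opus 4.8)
The plan is to use that $s_{\pi/2}$ is a Euclidean isometry of the plane that exchanges the two coordinate directions: unlike $s_\pi$ in Lemma~\ref{lem:lenght}, it sends vertical segments to horizontal ones and horizontal segments to vertical ones, but it still preserves lengths. Writing $s_{\pi/2}$ as $(x,y)\mapsto(-y,x)$, I would first record that $\tilde{U}=s_{\pi/2}(U)$ is again an element of $\mathcal{U}$, now attached to the rotated rectangle $[-b,b]\times[-a,a]$: every incidence and distinctness requirement in the definition~\eqref{eq:config} of $\mathcal{U}$ is invariant under an isometry that carries horizontal lines to vertical lines and vice versa, which is exactly what $s_{\pi/2}$ does, so no configuration is destroyed and the map is a bijection (with inverse $s_{3\pi/2}$).

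Then I would check the correspondence segment by segment. A vertical segment $[(x,y_0),(x,y_1)]$ of $U$ with $y_0<y_1$ has image $\{(-y,x):y_0\le y\le y_1\}$ under $s_{\pi/2}$, that is, after listing the endpoints in increasing abscissa, the horizontal segment $[(-y_1,x),(-y_0,x)]$ of $\tilde{U}$, of length $(-y_0)-(-y_1)=y_1-y_0$; symmetrically, a horizontal segment $[(x_0,y),(x_1,y)]$ of $U$ with $x_0<x_1$ has image the vertical segment $[(-y,x_0),(-y,x_1)]$ of $\tilde{U}$, of length $x_1-x_0$. Hence $s_{\pi/2}$ induces a length-preserving bijection $U\cap\set_V\to\tilde{U}\cap\set_H$ and a length-preserving bijection $U\cap\set_H\to\tilde{U}\cap\set_V$.

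Summing lengths along these two bijections and invoking the defining formulas for $L_V$ and $L_H$ then gives
\begin{align*}
  L_V(\tilde{U}) &= \sum_{[(x,y),(x,y')]\in\tilde{U}\cap\set_V}(y'-y) = \sum_{[(x,y),(x',y)]\in U\cap\set_H}(x'-x) = L_H(U),\\
  L_H(\tilde{U}) &= \sum_{[(x,y),(x',y)]\in\tilde{U}\cap\set_H}(x'-x) = \sum_{[(x,y),(x,y')]\in U\cap\set_V}(y'-y) = L_V(U),
\end{align*}
which is the claim. I do not expect any genuine obstacle: this is the verbatim analogue of Lemma~\ref{lem:lenght} with the roles of ``vertical'' and ``horizontal'' interchanged, and the only point deserving a word is the reordering of the endpoints of the image of a vertical (resp.\ horizontal) segment so that it is presented in the canonical form required for membership in $\set_H$ (resp.\ $\set_V$) --- a relabelling that leaves the length untouched and therefore does not affect the totals.
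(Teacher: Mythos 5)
Your proof is correct and follows the same route as the paper, which simply observes that $s_{\pi/2}$ sends each vertical segment to a horizontal one of the same length and vice versa; your segment-by-segment verification (including the endpoint reordering) just spells out that one-line argument in detail.
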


All of these points and the five conditions given in Theorem~\ref{thm:pi2} permit to rewrite Equation~\eqref{eq:densTU2} as
\begin{align*}
  & \nu_H^{\tHE} \nu_V^{\tVS} e^{-2 \nu_H b} e^{-2 \nu_V a} e^{-(\lambda_H + \tau_H) L_H} e^{-(\lambda_V + \tau_V) L_V} e^{-4 \lambda_0 ba} \nonumber \\
  & \qquad (\nu_V \tau_V)^{\tHT} \lambda_V^{\tHB}(\nu_V p_H)^{\tVA} \left(\frac{\lambda_0}{\nu_V}\right)^{\tOB} (\nu_V p_0)^{\tOA} \left(\frac{\lambda_H}{\nu_V}\right)^{\tVB} p_V^{\tHA} \left(\frac{\tau_H}{\nu_V} \right)^{\tVT} (1-p_V-p_H-p_0)^{\tCC}.
\end{align*}
We omitted $(U)$ in the previous equation.

After simplification with Equation~\eqref{eq:densU2}, we have to check if the following equation holds:
\begin{displaymath}
  \nu_V^{\tVE} \nu_H^{\tHE} = \nu_H^{\tHE} \nu_V^{\tVS + \tHT +\tVA - \tOB + \tOA - \tVB - \tVT}
\end{displaymath}
that is equivalent to $\nu_V^{\tVE + \tOB + \tVB + \tVT } = \nu_V^{\tVS + \tHT +\tVA + \tOA}$ that is true by Lemma~\ref{lem:sepoints}.\par
\medskip
The last point we need to prove now is that if $N_{\PPP(\nu_H,\nu_V)}$ is stationary for the \Bullet model with parameter $\para$, then $N_{\PPP(\nu_V,\nu_H)}$ is stationary for the \Bullet model with parameter $\tilde{\para}$. As in the end of the proof of Theorem~\ref{thm:pi}, we just need to consider the good rectangles to do it.\par
\bigskip
Let's suppose that the $x$-and $y$-axes are distributed according to $N_{\PPP(\nu_V,\nu_H)}$ and from this initial condition, we apply a \Bullet model with parameter $\tilde{\para}$. We want to prove that, for almost any $a,b,c,d > 0$,
\begin{displaymath}
  \mathcal{L}_{[0,\infty)^2}(\tilde{\para},N_{\PPP(\nu_V,\nu_H)})|_{[a,a+c]\times[b,b+d]} \eqd \mathcal{L}_{[a,\infty) \times [b,\infty)}(\tilde{\para},N_{\PPP(\nu_V,\nu_H)})|_{[a,a+c]\times[b,b+d]}.
\end{displaymath}
For that, consider first, the rectangle $[0,a+c] \times [0,b+d]$, by the result already proved,
\begin{displaymath}
  \mathcal{L}_{[0,\infty)^2}(\tilde{\para},N_{\PPP(\nu_V,\nu_H)})|_{[a,a+c]\times[b,b+d]} \eqd s_{\pi/2}\left( \mathcal{L}_{[0,\infty) \times [-a-c,\infty)}(\para,N_{\PPP(\nu_H,\nu_V)})|_{[b,b+d]\times[-a-c,-a]} \right).
\end{displaymath}
By hypothesis, the law $N_{\PPP(\nu_H,\nu_V)}$ is stationary for the \Bullet model with parameter $\para$. Applying it to the rectangle $[0,a+c]\times[b,b+d]$, we get
\begin{displaymath}
  \mathcal{L}_{[0,\infty) \times [-a-c,\infty)}(\para,N_{\PPP(\nu_H,\nu_V)})|_{[b,b+d]\times[-a-c,-a]} \eqd \mathcal{L}_{[b,\infty) \times [-a-c,\infty)}(\para,N_{\PPP(\nu_H,\nu_V)})|_{[b,b+d]\times[-a-c,-a]}.
\end{displaymath}
Finally, we apply once again the result already proved to the rectangle $[a,a+c] \times [b,b+d]$ to get
\begin{displaymath}
  \mathcal{L}_{[a,\infty) \times [b,\infty)}(\tilde{\para},N_{\PPP(\nu_V,\nu_H)})|_{[a,a+c]\times[b,b+d]} \eqd s_{\pi/2}\left( \mathcal{L}_{[b,\infty) \times [-a-c,\infty)}(\para,N_{\PPP(\nu_H,\nu_V)})|_{[b,b+d]\times[-a-c,-a]} \right).
\end{displaymath}
This is illustrated on Figure~\ref{fig:impi2}.
\end{proof}

\begin{figure}
  \begin{center}
    \begin{tabular}{cccc}
      \includegraphics[width=0.22 \textwidth]{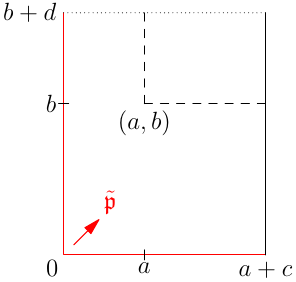} & \includegraphics[width=0.22 \textwidth]{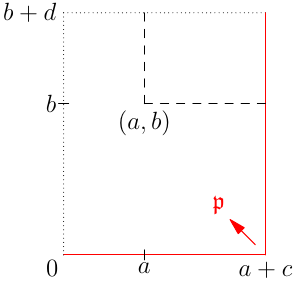} & \includegraphics[width=0.22 \textwidth]{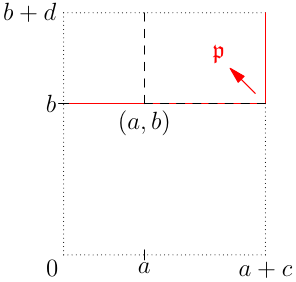} & \includegraphics[width=0.22 \textwidth]{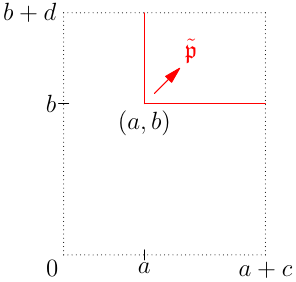}
    \end{tabular}
  \end{center}
  \caption{The successive rectangle considered to prove the stationarity of $(\para,N_{\PPP(\nu_V,\nu_H)})$.}%
  \label{fig:impi2}
\end{figure}

\begin{proof}[Proof of Corollary~\ref{cor:pi2}]
  Consider a \Bullet model with parameter $\para$ such that $A,B_V,B_H$, defined in Equations~\eqref{eq:A},~\eqref{eq:BV} and~\eqref{eq:BH}, are all not zero, and such that $B_V \lambda_ 0 = A \tau_V$ and $A p_0 = B_H \tau_V$. Then, we take $\nu_H = A/B_H$, $\nu_V = A/B_V$ and $\tilde{\para}$ defined by:
  \begin{itemize}
  \item $\tilde{\lambda}_0 = \frac{A}{B_V} \tau_V = \lambda_0$, $\tilde{\lambda}_V = \frac{A}{B_V} p_H$ and $\tilde{\lambda}_H = \lambda_V$,
  \item $\tilde{\tau}_V = \frac{B_H}{B_V} \tau_V = \frac{A}{B_V} p_0$ and $\tilde{\tau}_H = \tau_V = \frac{B_V}{A} \lambda_0$,
  \item $\tilde{p}_V = \frac{B_V}{A} \lambda_H$, $\tilde{p}_H = p_V$ and $\tilde{p}_0 = \frac{B_V}{A} \tau_H$.
  \end{itemize}

  Let us check now that the five conditions of Theorem~\ref{thm:pi2} are satisfied.
  \begin{enumerate}
  \item $\tilde{\lambda}_H + \tilde{\tau}_H = \lambda_V + \tau_V$ is trivial. Now, $\tilde{\lambda}_V + \tilde{\tau}_V = \lambda_H + \tau_H$?
    \begin{align*}
      \tilde{\lambda}_V + \tilde{\tau}_V & = \frac{A}{B_V} p_H + \frac{B_H}{B_V} \tau_V = \frac{A p_H + B_H \tau_V}{B_V}\\
                                         & = \frac{(\lambda_H + \tau_H) (\lambda_V + \tau_V) p_H - \tau_V \tau_H p_H + (p_H+p_V)(\lambda_H + \tau_H) \tau_V - p_H \lambda_H \tau_V}{B_V} \\
                                         & = \frac{(\lambda_H + \tau_H)[(\lambda_V + \tau_V) p_H + (p_H+p_V) \tau_V - \tau_V p_H]}{B_V} = (\lambda_H+\tau_H) \frac{B_V}{B_V}.
    \end{align*}
  \item $\tilde{\lambda}_0 = \lambda_0 = \frac{A}{B_V} \tau_V = \nu_V \tau_V$, $\tilde{\lambda}_V = \frac{A}{B_V} p_H = \nu_V p_H$.
  \item $\tilde{\tau}_V = \frac{A}{B_V} p_0 = \nu_V p_0$, and $\nu_V \tilde{\tau}_H = \frac{A}{B_V} \frac{B_V}{A} \lambda_0$.
  \item $\nu_V \tilde{p}_V = \frac{A}{B_V} \frac{B_V}{A} \lambda_H$, and $\nu_V \tilde{p}_0 = \frac{A}{B_V} \frac{B_V}{A} \tau_H$.
  \item $\tilde{p}_V + \tilde{p}_H + \tilde{p}_0 = p_V + p_H + p_0$?
    \begin{align*}
      \tilde{p}_V + \tilde{p}_0 + \tilde{p}_H & = \frac{B_V}{A} \lambda_H + \frac{B_V}{A} \tau_H + p_V = \frac{B_V}{A} (\lambda_H + \tau_H) + p_V \\
                                              & = \frac{B_V}{A} (\tilde{\lambda}_V + \tilde{\tau}_V) + p_V = \frac{B_V}{A} \tilde{\lambda}_V  + \frac{B_V}{A} \tilde{\tau}_V + p_V\\
                                              & = p_H + p_0 + p_V.                               
    \end{align*}
  \end{enumerate}
  Hence, the five conditions of Theorem~\ref{thm:pi2} are satisfied. That's end the proof of the corollary.
\end{proof}

\section{Thanks}
The author would like to thank Anne Briquet for her presentation about the colliding \Bullet model with creations many years ago, which inspires this paper. It would also like to thank Arvind Singh for pointing him to the article about Hammersley's tree and Matteo d'Achille for a short discussion about different kinds of \Loop models.

\bibliographystyle{alpha}
\bibliography{aaa}

\appendix

\section{Simulations code} \label{ann:code}
The realisations of Figure~\ref{fig:examples} are obtained via simulations coded in the Asymptote language~\cite{Asymptote}.

\lstdefinestyle{myStyle}{
    belowcaptionskip=1\baselineskip,
    breaklines=true,
    frame=none,
    numbers=none, 
    basicstyle=\footnotesize\ttfamily,
    keywordstyle=\bfseries\color{green!40!black},
    commentstyle=\itshape\color{red!40!black},
    identifierstyle=\color{blue},
    backgroundcolor=\color{gray!10!white},
}

\lstinputlisting[language=C++,style=myStyle]{DESSIN/SIMU/Simu.asy}

\end{document}